\documentclass[11pt]{article}

\usepackage{amssymb,amsmath,amsthm,amsfonts}
\usepackage[left=1in,top=1in,bottom=1in,right=1in,letterpaper]{geometry}
\usepackage{bbm}
\usepackage{hyperref}

\usepackage{titlesec}

\titleformat*{\section}{\large\bfseries}
\titleformat*{\subsection}{\bfseries}
\titleformat*{\subsubsection}{\large}
\titleformat*{\paragraph}{\large\bfseries}
\titleformat*{\subparagraph}{\large\bfseries}

\newtheorem{theorem}{Theorem}[section]
\newtheorem{proposition}[theorem]{Proposition}
\newtheorem{lemma}[theorem]{Lemma}

\theoremstyle{definition}
\newtheorem{definition}[theorem]{Definition}

\theoremstyle{remark}
\newtheorem{rmk}[theorem]{Remark}

\theoremstyle{definition}

\newcommand{\N}{\mathbb N}

\newcommand{\R}{\mathbb R}

\newcommand{\E}{\mathbb E}
\newcommand{\bbP}{\mathbb P}

\newcommand{\mcW}{\mathcal W}
\newcommand{\mcC}{\mathcal C}

\DeclareMathOperator{\supp}{supp}

\DeclareMathOperator{\ee}{e}
\DeclareMathOperator{\Leb}{Leb}

\def\la{\langle}
\def\ra{\rangle}

\newcommand{\constFromEq}[2]{#1_{\hyperref[#2]{\ref*{#2}}}}

\usepackage{color}

\title{\textbf{Compact Support Property of Super-Brownian Motion with Irregular Drift} }
\author{Leonid Mytnik, Johanna Weinberger}
\date{\today}
\begin{document}
	\maketitle
\begin{abstract}
\noindent We study the one-dimensional stochastic partial differential equation
\begin{equation*}
    d_t X_t(x)
=\frac12\Delta X_t(x)
+b_1\mathbf 1_{{X_t(x)>0}}
+\sqrt{X_t(x)}\dot W(t,x),
\end{equation*}
where $b_1>0$, $\dot W$ is space-time white noise, and the initial condition is a nonnegative, compactly supported continuous function. We prove that its unique weak solution has the compact support property. The drift term lies outside the usual regularity assumptions for the Dawson-Girsanov theorem. Instead, the proof is based on a layer decomposition in which the solution is constructed as the monotone limit of sums of super-Brownian motions with random immigration rates determined recursively by the positivity sets of the preceding layers. 

 By comparison, the compact support property extends to a broader class of bounded nonnegative drifts vanishing at the origin. 

Finally, support-radius estimates yield a comparison of the total mass of $X$ with a squared Bessel process, which allows us to show that $X$ has a positive extinction probability.

\end{abstract}	
	\section{Introduction}
	In this article, we prove the compact support property (CSP) of nonnegative solutions to the one-dimensional stochastic partial differential equation  
	\begin{equation}\label{eq:SPDE1_sup}
		d_tX_t(x) = \frac{1}{2}\Delta X_t(x) +b_1\mathbbm{1}_{X_t(x)>0}+ \sqrt{X_t(x)}\dot{W}(t,x), \quad X_0 = f, x\in \R, t\geq 0,
	\end{equation}
	where $f\in \mathcal{C}_{c}^+(\R)$, the space nonnegative continuous functions with compact support, $b_1 >0$ and $\dot{W}$ is Gaussian space-time white noise on $\R_+\times \R$. More precisely, we show that the unique weak solution to \eqref{eq:SPDE1_sup}, whose existence and uniqueness were established in \cite{Mytnik25}, is almost surely compactly supported at every time $t>0$ whenever the initial condition is compactly supported.
	Our proof relies on a construction of the unique weak solution to \eqref{eq:SPDE1_sup} as an infinite sum of super-Brownian motions with random immigration terms determined by the preceding summands. The support properties of these summands are then analyzed using classical techniques introduced in \cite{Iscoe89} together with quantitative estimates of \cite{Dawson89}, which ultimately allow us to conclude the CSP. The construction of solutions to \eqref{eq:SPDE1_sup} in this way is also of independent interest. 
	Moreover, the CSP for \eqref{eq:SPDE1_sup} also yields the compact support property for solutions to the more general equation
	\begin{equation}\label{eq:SPDE_gen}
		d_tX_t(x) = \frac{1}{2}\Delta X_t(x) +h(X_t(x))+ \sqrt{X_t(x)}\dot{W}(t,x), \quad X_0 = f, \ x\in \R,\ t\geq 0,
	\end{equation}
	provided that $h$ is a continuous function such that \(h(x)\leq b_1\mathbbm{1}_{x>0}\) for all \(x\in\R_+\) and some \(b_1>0\), and that weak uniqueness holds.
	Via a comparison with  squared Bessel processes, the estimates for the size of the support also allow us to derive estimates for the extinction probabilities of solutions to \eqref{eq:SPDE1_sup}.\\

	The compact support property for super-Brownian motion and related SPDEs has been studied by several different methods. 
	Since our approach relies on the method and results introduced by Iscoe, we briefly recall that line of work first.
	
	In the classical super-Brownian setting, Iscoe \cite{Iscoe89} proved the compact support property by combining duality with the analysis of the solutions to 
	\begin{equation} \label{eq:parabolic_infinity_1}
		\begin{split}
			&\partial_t V_t(x) = \frac{1}{2}\big( \Delta V_t(x)- V_t^2(x)\big), \quad t >0, x\in (-r,r) \\
			&V_t(x) \rightarrow \infty, \text{ as } x\rightarrow  r \text{ or } x \rightarrow -r, \\
			& V_0(x) = 0, \quad x\in [-r,r],
		\end{split}
	\end{equation}
	and 
	\begin{equation}\label{eq:elliptic_infinity_1}
		\begin{split}
			\Delta u(x) = u^2(x), \quad x\in (-r,r),\\
			u(x) \rightarrow \infty, \text{ as } x\rightarrow r \text{ or } x \rightarrow -r. 
		\end{split}
	\end{equation}
	for some $r>0$. 
	The equations \eqref{eq:parabolic_infinity_1} and \eqref{eq:elliptic_infinity_1} also play a central role in the work of Dawson, Iscoe, and Perkins \cite{Dawson89}, where further path properties of super-Brownian motion are obtained, including a modulus of continuity for the support and estimates on the probability of hitting balls.
	
	This approach was subsequently extended beyond the classical super-Brownian case. In dimension one, Shiga \cite{Shiga94} considered equations with noise coefficient of the form $a(X)\dot W$, where $a$ is a continuous function such that
	\[
    \begin{cases}
        c_1x^{1/2} \leq a(x), \quad x \in (0,K),\\
        a(x)\leq c_2(1+x), \quad x\in \R_+,
    \end{cases}
	\]
	for some $K,c_1,c_2>0$, and used an argument closely related to the ideas already present in \cite{Iscoe89}. More generally, Engl\"ander and Pinsky \cite{Englander99,Englander06} developed an extension of Iscoe's method to a broad class of measure-valued diffusions.
	Recent works also show that the approach from \cite{Iscoe89} remains useful in singular settings. In particular, Jin and Perkowski \cite{Jin25} established the compact support property for rough super-Brownian motion on $\mathbb{R}^2$ by combining the classical measure-valued-process approach of \cite{Englander99,Englander06} with the interior estimate method developed in \cite{Moinat20}.

	An alternative approach to analyzing stochastic heat equations with multiplicative noise of the form $X^\gamma \dot W$ is through the use of historical processes.  Mueller and Perkins \cite{Mueller92} established compact support for $\gamma\in(0,1)$ using this theory. Their method also yields additional information on the support; see, for example, \cite[Corollary 3.8]{Mueller92}. In dimension one this range for $\gamma$ is sharp: it was proven in \cite{Mueller91} that the compact support property fails when $\gamma \geq 1$.
	
	Later, Krylov \cite{Krylov97} gave another proof for the compact support property in one dimension for SPDEs with multiplicative noise of the form
	\begin{equation*}
		d_tX_t(x) = a(t,x)\Delta X_t(x) + b(t,x) \partial_xX_t(x) +c(t,x)X_t(x) + v(t,x)\big(X_t(x)\big)^\gamma \dot{W}(t,x), \quad t>0, x\in \R, 
	\end{equation*}
	where $a,b,c,v$ are possibly random, measurable functions satisfying certain conditions and $\gamma \in (0,1)$.
	This approach was inspired by methods used for PDEs, in particular the maximum principle.
	This approach has proven flexible enough to treat more general driving noises. Han, Kim, and Yi \cite{Han23} extended this method to stochastic PDEs driven by spatially colored noise, which allows one to consider higher spatial dimensions. More recently, Hughes \cite{Hughes25} adapted the same strategy to equations driven by $\alpha$-stable noise, proving compact support in dimension one for noise coefficients of the form $X^\gamma \dot L$ with $\gamma\in(2-\alpha,1)$. 
	
	In the case of super-Brownian motion with drift we note that in some situations the compact support property  may also be obtained via a Dawson--Girsanov transform, as demonstrated in \cite{Muller94}. However, this approach is not available in our setting, since the hypotheses of the Dawson--Girsanov theorem fail because of the irregularity of the drift. For this reason we introduce a different approach for proving the compact support property of solutions to \eqref{eq:SPDE1_sup}, which relies on the representation of the solution as an infinite series. To the best of our knowledge, this is the first result establishing the CSP of super-Brownian motion with drifts $h$ such that $h(x) \geq cx^\beta$ for $\beta\in (0,1/2)$ and $x$ close to zero or where $h$ has a jump discontinuity at $0$.
	
	The compact support property is closely related to the extinction probability of solutions to \eqref{eq:SPDE_gen}. This was, for instance, demonstrated in \cite{Muller94}, where the authors used estimates for the size of the support of the solution to \eqref{eq:SPDE_gen} with $h(x) = \theta x- x^2$ to compare the total mass of the solution with a certain SDE. In this article we will use a similar approach to estimate the extinction probability of solutions to \eqref{eq:SPDE1_sup}. 
    \paragraph{Organization}
    In Section~\ref{sec:not}, we introduce notation used throughout the article. \\~\\
    In Section~\ref{sec:main_res}, we state the main results of the article. \\~\\
    Next, in Section~\ref{sec:initial_boundary}, we recall several results about initial and boundary trace problems for the log-Laplace equation.\\~\\
    In Section~\ref{sec:construction_seq}, we construct the sequence of super-Brownian motions with immigration that we later use to derive a new representation of the weak solution to \eqref{eq:SPDE1_sup}.\\~\\
    In Section~\ref{sec:cozero}, we derive estimates for the cozero sets of the processes constructed in Section~\ref{sec:construction_seq}.\\~\\
    In Section~\ref{sec:radius}, we derive estimates for the maximal radius of the support of the partial sums of the processes constructed in Section~\ref{sec:construction_seq}.\\~\\
    In Section~\ref{sec:series_rep}, we prove Theorem~\ref{th:convergence_X_bar_n}. \\~\\
    In Section~\ref{sec:compact_support}, we prove Proposition~\ref{prop:expectation_RT} and Theorem~\ref{th:compact_support}.\\~\\
    Finally, in Section~\ref{sec:extinction}, we provide the proof of Theorem~\ref{prop:extinction}.
    \paragraph{Acknowledgments}
    The work of the authors was supported in part by the ISF grant No. 1985/22.

    \paragraph{Statement on AI use}
    The mathematical results and core arguments of this paper were developed by the authors independently. At a late stage of preparation, the authors used OpenAI’s ChatGPT [5.6 Pro; accessed August 2026] for critical reading and language editing of the manuscript, including identifying passages and proof steps requiring further clarification. All resulting revisions were independently evaluated, implemented, and verified by the authors, who assume full responsibility for the content.

	\subsection{Notation}\label{sec:not}
	We use the following notation for common function spaces.
	\begin{enumerate}
		\item $\mathcal{C}(\R)$ denotes the space of real-valued, continuous functions on $\R$ and $\mathcal{C}^k(\R)$, $k \in N$ denotes the space of $k$-times continuously differentiable, real-valued functions on $\R$. Furthermore, we denote $\mcC_b^k(\R) = \{ f \in \mcC^k(\R) \colon f, f', \ldots f^{(k)} \text{ are bounded}\}$.
		\item $\mathcal{C}_c(\R)$ denotes the space of compactly supported functions in $\mathcal{C}(\R)$. 
		\item $	\mathcal{C}_{tem} = \{ f \in \mathcal{C}(\R) \colon |f|_{(\lambda)}<\infty \quad \forall \lambda <0\}$, where $	|f|_{(\lambda)} = \sup_{x\in \R}|\ee^{\lambda|x|}f(x)|$ for $f\in \mathcal{C}(\R)$ and we equip this space with the topology induced by the metric
            \[
                d_{\mathrm{tem}}(f,g):=\sum_{n=1}^{\infty}2^{-n}\bigl(1\wedge |f-g|_{(-1/n)}\bigr).\]
		\item $\mathcal{C}(\R_+,\mathcal{C}_{tem}^+)$ is the space of functions $f\colon \R_+ \rightarrow \mathcal{C}_{tem}^+$, where continuity is understood with respect to \(d_{\mathrm{tem}}\).
		\item $L^p(A,m)$ for $p \in [1,\infty]$ denotes the usual $L^p$-space on the measure space $(A, \mathcal{A},m)$. If there is no ambiguity we write $L^p$ instead of $L^p(A,m)$ for $p \in [1,\infty]$.
		\item $\mathcal{M}_F^+$ denotes the set of finite, nonnegative measures on $\R$.
		\item We denote by $(\mathcal{M}_F^+, d_{bl})$ the metric space of nonnegative, finite measures on $\R$ with the bounded Lipschitz metric $d_{bl}$ defined as 
		\begin{equation*}
			d_{bl}(\mu,\nu) = \sup_{f\in \mathcal{L}} \Big| \langle \mu ,f \rangle - \langle \nu , f\rangle\Big|,  
		\end{equation*}
		where 
		\begin{equation*}
			\mathcal{L} = \{ f \in \mcC(\R) \colon \|f\|_\infty \leq 1, \|f\|_{Lip} \leq 1\}
		\end{equation*}
		and $\|f\|_{Lip} = \sup_{x\neq y} \frac{|f(x)-f(y)|}{|x-y|}$. By straightforward modifications, it follows from \cite[Theorem 8.3.2]{bogachev2007measure} that $d_{bl}$ induces the topology of weak convergence. 
		\item When $ \mu \in \mathcal{M}_F^+$ has a density $f$ with respect to the Lebesgue measure, we sometimes use the notations $fdx$ and $f$ interchangeably, in a slight abuse of notation. 
		\item $L^1(\Omega, \mcC([0,T], \mathcal{M}_F^+)) = L^1(\Omega, \mcC([0,T], (\mathcal{M}_F^+, d_{bl})))$  is the metric space (equivalence classes) of measurable functions $f\colon \Omega \rightarrow  \mcC([0,T], \mathcal{M}_F^+)$ so that 
		\begin{equation*}
			 \E[\| \langle f, 1 \rangle\|_\infty] <\infty,
		\end{equation*}
		with the metric 
		\begin{equation*}
			d(f,g) = 	 \E[\| d_{bl}(f,g)\|_\infty].
		\end{equation*}
		Note that $ \mcC([0,T], \mathcal{M}_F^+)$ is a complete, separable metric space and thus the notions of measurability and strong measurability coincide by Pettis theorem. Furthermore, it can be shown that  $L^1(\Omega, \mcC([0,T], \mathcal{M}_F^+))$ is complete along the same lines as the usual proof of completeness for Lebesgue spaces. 
		\item $\mathcal{B}(\R)$ denotes the Borel $\sigma$-algebra.
	\end{enumerate}
	If $A$ is a subspace of $\mathcal{C}(\R)$, we denote by $A^+$ the subset of nonnegative functions in $A$. Furthermore, we denote by $A_c$ the subspace of compactly supported functions in $A$. 
	We also introduce the following notation
	\begin{equation*}
		\la \mu,f\ra = \int_\R f(x)\mu(dx), 
	\end{equation*}
	for a measure $\mu \in \mathcal{M}_F^+$ and a function $f\in \mathcal{C}_b(\R)$. Furthermore, if $g\in L^1(\R)$ and $f\in \mcC_b(\R)$ we also write 
	\begin{equation*}
		\la f,g\ra = \int_\R f(x)g(x)dx.
	\end{equation*}
	
	\section{Main Results}\label{sec:main_res}
	In this section, we introduce the main results of this article. To formulate them precisely, we first specify the notion of a weak solution to \eqref{eq:SPDE1_sup} used throughout.

    \begin{definition}
			We say that \eqref{eq:SPDE1_sup} has a weak solution $X_t$, if there exists a filtered probability space $(\Omega, \mathcal{F}, (\mathcal{F}_{t})_{t\geq 0}, \bbP)$ such that $(X_t)_{t\geq 0}$ is an adapted $\big(\mathcal{C}_{tem}^+\big)$-valued 
		continuous process with $X_0 =f $ and such that $\dot{W}$ is space-time white noise and for every $(t,x) \in (0,\infty)\times \R$ almost surely
		\begin{equation*}
			\begin{split}
				&X_t(x) = \int_\R p_t(x-y) f(y)dy + \int_\R \int_0^t p_{t-s}(x-y)b_1 \mathbbm{1}_{X_s(y)>0}dsdy \\
				& \hspace{2cm}+ \int_\R \int_0^t p_{t-s}(x-y) \sqrt{X_s(y)}W(dsdy),\quad \bbP-a.s.,
			\end{split}
		\end{equation*}
        where 
        \begin{equation*}
        p_t(x) = \frac{1}{\sqrt{2 \pi t}}\exp(-x^2/(2t)), \quad t>0, x\in\R.
        \end{equation*}
	\end{definition}
	Before proving the compact support property for \eqref{eq:SPDE1_sup}, we first derive a particular construction of its unique weak solution.
To this end, we first introduce the following sequence.
	Let $X^0$ be the unique solution to 
	\begin{equation}\label{eq:SPDE_0}
		d_tX_t^0(x) = \frac{1}{2}\Delta X_t^0(x)+ \sqrt{X_t^0(x)}\dot{W^0}(t,x), \quad X_0^0 = f, x\in \R, t\geq 0,
	\end{equation}
	where $\dot{W}^0$ is space-time white noise and $f\in \mcC_c^+(\R)$. 
	Now, we recursively construct processes $X^k$ for $k\geq 1$. To this end, denote by $(\mathcal{F}^{k-1}_t)_{t\geq 0}$ the completed, right-continuous natural filtration generated by $(X^0,\ldots X^{k-1})$ for $k\geq 1$ and set
    \begin{equation*}
		\mathcal{F}_\infty^{k-1} = \sigma\Big(\cup_{t>0}\mathcal{F}^{k-1}_t\Big).
	\end{equation*}
	Then,  conditional on $\mathcal{F}_\infty^{k-1}$, we construct $X^k$ as the solution to 
	\begin{equation}\label{eq:SPDE_k}
		d_tX_t^k(x) = \frac{1}{2}\Delta X_t^k(x) +g^k(t,x)+ \sqrt{X_t^k(x)}\dot{W^k}(t,x), \quad X_0^k \equiv0,  x\in \R, t\geq 0,
	\end{equation}
	where $\{\dot{W}^k\}_{k\geq 0}$ are independent space-time white noises  and 
	\begin{equation}\label{eq:def_g_k}
		g^k(t,x) = b_1\mathbbm{1}_{X^{k-1}_t(x) >0, \sum_{i =1}^{k-1}g^i(t,x) = 0}, \quad x\in \R, t \geq 0 
	\end{equation}
    for $k\geq 2$ and 
    \begin{equation*}
        g^1(t,x) = b_1\mathbbm{1}_{X^{0}_t(x) >0}, \quad x\in \R, t \geq 0.
    \end{equation*}
	Note that $g^{k}$ is measurable with respect to $\mathcal{F}^{k-1}_\infty$ and that $\dot{W}^k$ is independent of $\mathcal{F}^{k-1}_\infty$. Conditionally on $\mathcal{F}_\infty^{k-1}$, $X^k$ is a superprocess with immigration given by $g^k$, which is positive at points $(t,x)\in \R_+\times \R$ where $X^{k-1}$ is positive but $X^i_t(x) = 0$ for $i \in \{0,\ldots k-2\}$ when $k\geq 2$.\par
	We furthermore define the sequence 
	\begin{equation}\label{eq:def_X_bar_n}
		\bar{X}_t^n(x) := \sum_{k =0}^nX^k_t(x), \quad t\geq0, x\in \R.
	\end{equation}
	Using standard arguments, it is possible to show that, on an enlarged probability space, the process $\bar{X}^n$ satisfies
	\begin{equation*}
		d_t\bar{X}^n_t(x) = \frac{1}{2}\Delta \bar{X}_t^n + b_1\mathbbm{1}_{\bar{X}_t^{n-1}(x)>0} + \sqrt{\bar{X}^n}\dot{\tilde{W}}^n(t,x), \quad \bar{X}_0^n = f, x\in \R, t\geq 0,
	\end{equation*}
	where $\dot{\tilde{W}}^n$ is space-time white noise.
	Weak existence and uniqueness of solutions to \eqref{eq:SPDE_k} are proved in Section~\ref{sec:construction_seq}, which allows us to construct \eqref{eq:def_X_bar_n}. \\
	In the next theorem, we show that the sequence \eqref{eq:def_X_bar_n}  converges to the unique weak solution to \eqref{eq:SPDE1_sup}. Here, the convergence is in the space 
    \begin{equation*}
        L^1(\Omega; \mcC([0,T], (\mathcal{M}_F^+, d_{bl}))),
    \end{equation*}
    where $d_{b_l}$ denotes the bounded Lipschitz metric defined as 
		\begin{equation*}
			d_{bl}(\mu,\nu) = \sup_{f\in \mathcal{L}} \Big| \langle \mu ,f \rangle - \langle \nu , f\rangle\Big|,  
		\end{equation*}
		where 
		\begin{equation*}
			\mathcal{L} = \{ f \in \mcC(\R) \colon \|f\|_\infty \leq 1, \|f\|_{Lip} \leq 1\}
		\end{equation*}
		and $\|f\|_{Lip} = \sup_{x\neq y} \frac{|f(x)-f(y)|}{|x-y|}$. By \cite[Theorem 8.3.2]{bogachev2007measure} it follows that $d_{bl}$ induces the topology of weak convergence. 
	\begin{theorem}\label{th:convergence_X_bar_n}
		There exists a filtered probability space $(\Omega, (\mathcal{F}_t)_{t\geq 0}, \bbP)$ such that for any $T >0$ the sequence $(\bar{X}^n)_{n\geq 1}$ converges in $L^1(\Omega;\mcC([0,T], (\mathcal{M}_F^+, d_{bl})))$ to the unique weak solution $X$ of \eqref{eq:SPDE1_sup} in $\mcC([0,T], \mcC_{tem}^+)$.
	\end{theorem}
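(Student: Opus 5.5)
The proof is organised around one observation: the partial sums $\bar X^n$ increase in $n$, since each $X^k$ is nonnegative. The plan is therefore to obtain a monotone limit $\bar X^\infty$, show that it is a weak solution of \eqref{eq:SPDE1_sup}, and then invoke the uniqueness result of \cite{Mytnik25}.

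\textbf{Step 1: uniform mass control and the limit.} Work on a single probability space that carries the independent noises $\{\dot W^k\}_{k\ge0}$, and build $X^k$ recursively and conditionally on $\mathcal F^{k-1}_\infty$ using the well-posedness of \eqref{eq:SPDE_k} from Section~\ref{sec:construction_seq}.

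The key structural fact is that the immigration sets are disjoint. By \eqref{eq:def_g_k}, at most one $g^k(t,x)$ is nonzero at each $(t,x)$, so
\[
\sum_k g^k(t,x)=b_1\mathbbm 1_{\bar X^{\,n-1}_t(x)>0}\Big|_{n\to\infty}\le b_1\mathbbm 1_{\{\bar X^\infty_t(x)>0\}}.
\]
This indicator is positive only on the support. To bound the total immigrated mass, I will use the mass of the support of the layers, and here I plan to use the support radius estimates from Section~\ref{sec:radius}. These give
\[
\E\Big[\sup_{t\le T}\langle \bar X^n_t,1\rangle\Big]\le C(T)
\]
uniformly in $n$, obtained from the martingale decomposition plus Doob's and BDG inequalities applied to $\langle \bar X^n,1\rangle$. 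The immigration term is bounded by $b_1\,T\cdot\E[\text{Leb of the support}]$, and this is finite uniformly in $n$ because each cozero set is controlled by Section~\ref{sec:cozero}.

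With this bound in hand, monotonicity gives $\E\bigl[\sup_{t\le T}\langle \bar X^\infty_t-\bar X^n_t,1\rangle\bigr]\to0$ by monotone convergence, since $\sum_{k>n}\langle X^k,1\rangle$ is a tail of a convergent series. Because $d_{bl}(\mu,\nu)\le\langle\mu-\nu,1\rangle$ when $\nu\le\mu$, the sequence is Cauchy in $L^1(\Omega;\mcC([0,T],\mathcal M_F^+))$, which is complete. So $\bar X^n\to\bar X^\infty$ in this space, and the limit has continuous paths.

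\textbf{Step 2: identifying the limit.} I will pass to the limit in the martingale problem satisfied by $\bar X^n$ (the one with drift $b_1\mathbbm1_{\bar X^{n-1}>0}$ stated before the theorem). For $\phi\in\mcC_c^2$,
\[
M^n_t(\phi)=\langle\bar X^n_t,\phi\rangle-\langle f,\phi\rangle-\int_0^t\langle\bar X^n_s,\tfrac12\phi''\rangle ds-b_1\int_0^t\!\!\int\phi\,\mathbbm1_{\bar X^{n-1}_s>0}\,dx\,ds
\]
is a martingale with quadratic variation $\int_0^t\langle\bar X^n_s,\phi^2\rangle ds$.

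The drift term needs the most care. The sets $\{\bar X^{n-1}_s>0\}$ increase to $\{\bar X^\infty_s>0\}$, because a monotone limit is positive exactly when some term is positive. Monotone convergence therefore gives convergence of the drift to $b_1\int\!\!\int\phi\,\mathbbm1_{\bar X^\infty>0}$.

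Uniform integrability, which follows from second-moment bounds, then shows that $\bar X^\infty$ solves the martingale problem with drift $b_1\mathbbm1_{X>0}$. A Walsh-type representation theorem on an enlarged space turns this into a weak solution in the mild sense.

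\textbf{Step 3: regularity and uniqueness.} It remains to show that $\bar X^\infty$ has a density in $\mcC(\R_+,\mcC^+_{tem})$. My plan is to use the mild form together with Kolmogorov moment estimates for the stochastic convolution; the drift is bounded, so its convolution is Hölder. Here the nontrivial point is that the density is continuous, so that the indicator $\mathbbm1_{X>0}$ is well defined and agrees with the measure-level limit. Once this is done, uniqueness from \cite{Mytnik25} identifies $\bar X^\infty$ with the weak solution $X$.

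I expect the main obstacle to be the uniform-in-$n$ mass (and moment) bound in Step 1. It requires controlling the Lebesgue measure of the union of the positivity sets, which is essentially where the estimates of Sections~\ref{sec:cozero}--\ref{sec:radius} have to enter. As a first attempt I would try to bound the support radius of $\bar X^n$ on $[0,T]$ by that of a single dominating process, uniformly in $n$.
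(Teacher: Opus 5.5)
Your architecture is the paper's: monotone limit, Cauchy property in $L^1(\Omega;\mcC([0,T],\mathcal M_F^+))$ via $d_{bl}(\mu,\nu)\le\langle\mu-\nu,1\rangle$ for $\nu\le\mu$, passage to the limit in the martingale problem using $\{\bar X^{n-1}_s>0\}\uparrow\{\bar X^\infty_s>0\}$, $\mcC_{tem}^+$-regularity by Shiga-type estimates, a Konno--Shiga representation, and uniqueness from \cite{Mytnik25}. However, the step that produces convergence in path space is not justified as written.

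From $\sup_n\E[\sup_{t\le T}\langle\bar X^n_t,1\rangle]<\infty$ and monotonicity in $n$, monotone convergence gives $\E[\sup_{t\le T}\langle\bar X^\infty_t,1\rangle]<\infty$ and convergence of the tails for each fixed $t$. It does not give $\E[\sup_{t\le T}\sum_{k>n}\langle X^k_t,1\rangle]\to0$. An increasing sequence of continuous functions of $t$ converges uniformly on $[0,T]$ only if the limit is continuous (Dini). Continuity of $t\mapsto\langle\bar X^\infty_t,1\rangle$ is exactly what you then state as a consequence, so the argument is circular. Your bound also does not give $\sum_k\E[\sup_{t\le T}\langle X^k_t,1\rangle]<\infty$, because a supremum of a sum is not the sum of the suprema. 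So ``tail of a convergent series'' does not apply to the quantity inside $\sup_t$.

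What is missing is a direct estimate on the differences; this is Proposition~\ref{prop:l1_conv} in the paper. For $n>m$, $Y=\bar X^n-\bar X^m=\sum_{k=m+1}^nX^k$ is itself a superprocess started from $0$ with immigration $\sum_{k=m+1}^ng^k$. So $\langle Y_t,1\rangle$ splits into two parts:
\begin{itemize}
\item a drift that is nondecreasing in $t$, whose supremum over $[0,T]$ is its value at $T$, with expectation at most $b_1\sum_{k=m}^{n-1}J^k_T$;
\item a martingale with quadratic variation $\int_0^t\langle Y_s,1\rangle ds$, which BDG controls.
\end{itemize}
This gives $\E[\sup_{t\le T}\langle Y_t,1\rangle]\le b_1\sum_{k\ge m}J^k_T+C\bigl(Tb_1\sum_{k\ge m}J^k_T\bigr)^{1/2}\to0$, since $\sum_kJ^k_T<\infty$ by \eqref{eq:est_Jnt}. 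Your radius-based bound on the expected total immigration would serve equally well, but only once it is fed through this decomposition rather than through monotone convergence.

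Do not try to bound the radius by a single compactly supported dominating process. The natural candidate is $X$ itself, whose compact support is what this construction is meant to prove. Lemma~\ref{lem:sup_supports_exp}, or simply $\sum_kJ^k_T<\infty$, is the right input.

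Finally, in Step 2 the same orthogonal-martingale and BDG argument gives $M^n(\phi)\to M(\phi)$ in $L^2(\Omega;\mcC([0,T]))$. This identifies the quadratic variation of the limit using first moments only, and you should use it in place of the ``second-moment bounds'' you invoke. The estimates of Sections~\ref{sec:cozero}--\ref{sec:radius} control only first moments of the cozero-set measure and of the support radius, so they do not supply such bounds.
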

	We prove the above theorem in Section~\ref{sec:series_rep}.
	The above construction allows us  not only to establish the compact support property for \eqref{eq:SPDE1_sup}, but also to estimate the expected value of the radius of the maximal support of the solution defined as
	\begin{equation*}
		R_T = \sup\{|x| \colon \exists t\in [0,T] \text{ such that } X_t(x) >0 \},
	\end{equation*}
	for a time $T \geq0$. If we want to emphasize $R_T$'s  dependence on $X_0 = f$ we will write $R_T^f$ and we use the convention $\sup \emptyset = 0$.
	\begin{proposition}\label{prop:expectation_RT}
		Let $b_1>0$ and let $X$ be the unique weak solution to \eqref{eq:SPDE1_sup} in $\mcC(\R_+, \mcC_{tem}^+)$ with $X_0 \equiv f\in C_c^+(\R)$ and denote $m_f = \langle f,1 \rangle$ and
        \begin{equation*}
            R_0 =  \sup\{|x| \colon f(x) >0 \},
        \end{equation*}
        with the convention $\sup \emptyset =0$.
        Then there exists a finite constant $\beta(T, m_f) \geq 0$  for every $T>0$ such that $\beta(T,m_f)\rightarrow 0$ as $m_f \downarrow 0$  and 
		\begin{equation}\label{eq:expectation_RT_est}
			\E_f[R_T] \leq A(T)(R_0 +\sqrt{R_0} + \beta(T,m_f)), 
		\end{equation}
		where  $A(T) = 1+ \sqrt{2\pi b_1 \constFromEq{c}{eq:est_v_infty_1}}T^{1/2} + \sum_{i =1}^\infty a_i(T) <\infty$ with $a_i$ given by \eqref{eq:def_ai}.
	\end{proposition}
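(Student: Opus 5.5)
We will prove \eqref{eq:expectation_RT_est} through the layer decomposition of Theorem~\ref{th:convergence_X_bar_n}. For each layer $k$ we let $R^k_T$ be the maximal support radius of $X^k$ on $[0,T]$. The support of $X$ is contained in the closure of $\bigcup_k \supp X^k$, which follows from the monotone $L^1$ convergence of $\bar X^n$ together with continuity. Hence $R_T\le \sup_k R^k_T$, and it suffices to bound $\E[\sup_{k\le n} R^k_T]$ uniformly in $n$.

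\textbf{Layer $0$.} $X^0$ is a super-Brownian motion started from $f$. By Iscoe's duality with the boundary-blow-up solutions of \eqref{eq:parabolic_infinity_1}, and the quantitative bounds of \cite{Dawson89} (as recalled in Section~\ref{sec:initial_boundary}), we have $\bbP(R^0_T > R_0 + r) \le 1-\exp(-\langle f, V^r_T\rangle)\le m_f\,\|V^r_T\|_{\infty,\text{on }\supp f}$. Here $V^r$ decays like $c\,r^{-2}$ times a Gaussian factor in $r/\sqrt T$. Integrating in $r$ gives $\E[R^0_T]\le R_0 + \beta_0(T,m_f)$ with $\beta_0\to0$ as $m_f\downarrow0$.

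\textbf{Layer $k\ge1$.} Conditionally on $\mathcal F^{k-1}_\infty$, the layer $X^k$ is a super-Brownian motion with zero initial condition and immigration $g^k\le b_1\mathbbm 1_{X^{k-1}>0}$. Immigration is supported in $[0,T]\times[-R^{k-1}_T,R^{k-1}_T]$. The log-Laplace functional gives
\[
\bbP\big(R^k_T > R^{k-1}_T + r \mid \mathcal F^{k-1}_\infty\big)\le \int_0^T\!\!\int g^k(s,y)\, V^{r}_{T-s}(y)\,dy\,ds .
\]
Two estimates are available here. The first uses the uniform bound $V\le \constFromEq{c}{eq:est_v_infty_1}/r^2$ together with the immigration length $2R^{k-1}_T$. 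This yields an increment of order $\sqrt{2\pi b_1 c\,T}\,\sqrt{R^{k-1}_T}$-type terms, which is the source of the $\sqrt{R_0}$ and $T^{1/2}$ terms. The second uses the measure $\int\!\!\int g^k$ of the set where $g^k>0$. That measure is controlled by the cozero set estimates of Section~\ref{sec:cozero}, which show that the Lebesgue measure of $\{X^{k-1}>0,\ \sum_{i<k}g^i=0\}$ decays in $k$.

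Combining these bounds recursively gives $\E[R^k_T - R^{k-1}_T]\le a_k(T)(R_0+\sqrt{R_0}+\beta(T,m_f))$, where $a_k$ is summable by the definition \eqref{eq:def_ai} and Section~\ref{sec:radius}. Summing over $k$ and adding layer $0$ yields $\E[\sup_k R^k_T]\le A(T)(R_0+\sqrt{R_0}+\beta)$.

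\textbf{Main obstacle.} The hard step is getting summable $a_k$. The space-time measure of the immigration region for layer $k$ must shrink geometrically, and this has to be expressed in expectation after integrating the tail bound in $r$ and dealing with the concavity of $\sqrt{\cdot}$ (Jensen). We would first try to bound $\E\,\Leb\{(s,y):g^k(s,y)>0\}$ by iterating the cozero estimates. The time a site spends positive in layer $k-1$ but not earlier should be small, because each new layer only lives on the region where all previous immigrations were absent.
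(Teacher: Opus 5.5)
Your architecture is the paper's. The radii of $\bar X^n$ increase to $R_T$; each increment is controlled by duality with $V^r\le c\,r^{-2}$ on the immigration region; the $r$-integral $\int_0^\infty(1-e^{-a r^{-2}})\,dr=\sqrt{\pi a}$ is combined with Jensen; and a Dawson--Iscoe--Perkins bound gives $\E_f[R^0_T]\le R_0+\beta^1(T,m_f)$. The paper's proof is literally monotone convergence plus Lemma~\ref{lem:sup_supports_exp}, i.e.\ $\E_f[R_T]\le A(T)\bigl(\E_f[R^0_T]+\sqrt{\E_f[R^0_T]}\bigr)$, followed by $\sqrt{R_0+\beta^1}\le\sqrt{R_0}+\sqrt{\beta^1}$ and $\beta=\beta^1+\sqrt{\beta^1}$.

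The gap is in the step you yourself call the main obstacle: the mechanism you propose for the decay of the immigration measure is not what makes it work, and it would not work. The constraint $\sum_{i<k}g^i=0$ plays no role; the paper drops it and uses only $g^k\le b_1\mathbbm{1}_{X^{k-1}>0}$. Disjointness of the regions $\{g^k>0\}$ only yields $\sum_k\int_0^T\!\int g^k=b_1\int_0^T\Leb\{\bar X_s>0\}\,ds$. That is circular, since finiteness of its expectation is essentially the support control being proved. It also carries no rate, whereas you need $\sum_k\sqrt{J^{k-1}_T}<\infty$, which summability of the $J^k_T$ alone would not give.

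The missing idea is a second duality, with the very singular solution $\mcW^x$ (initial trace $\infty\cdot\delta_x$) instead of the boundary blow-up solution $V^r$. It gives $\bbP(X^k_s(x)>0\mid\mathcal F^{k-1}_\infty)\le\int_0^s\langle g^k_r,\mcW^x_{s-r}\rangle\,dr$, and by self-similarity $\int_0^t\int_\R\mcW^x_s(y)\,dx\,ds\le c\sqrt t$. Integrating in $(s,x)$ yields the Abel-type inequality $J^k_t\le\tfrac{c\,b_1}{2}\int_0^t(t-s)^{-1/2}J^{k-1}_s\,ds$, started from $J^0_t\le 2t\,\E_f[R^0_t]$. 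Iterating this half-order fractional integral gives $J^k_T\lesssim\E_f[R^0_T]\,(c\,b_1)^k\bigl(\prod_{j=2}^k w_{j+2}\bigr)T^{(k+2)/2}$ with $w_j\to0$. This decay is superexponential, so it survives the square root. It also explains why the $k$-th increment for $k\ge2$ is $a_{k-1}(T)\sqrt{\E_f[R^0_T]}$, with no recursion in the radii, and the $k=1$ increment is $\sqrt{2\pi b_1 c\,T}\sqrt{\E_f[R^0_T]}$. Since you cite Section~\ref{sec:cozero}, you may simply invoke these bounds (indeed Lemma~\ref{lem:sup_supports_exp} directly), but your explanation of why they hold should be replaced by this one.

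Two minor points. In layer $0$, the bounds $c\,r^{-2}$ and $c\,r\,T^{-3/2}e^{-r^2/(2T)}$ (the latter only for $r>2\sqrt T$) are alternatives, not a product. The first alone suffices, since it gives $\E_f[R^0_T]\le R_0+\sqrt{\pi c\,m_f}$. In your layer-$k$ tail bound you must keep $1-e^{-x}$ (or $\min(1,x)$) rather than $x$; otherwise the $r$-integral diverges at $0$.
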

	The above proposition immediately implies the following theorem, which is the central result of this article.  
	\begin{theorem}\label{th:compact_support}
       Let $b_1>0$ and $X$ denote the unique weak solution to \eqref{eq:SPDE1_sup} in $\mcC(\R_+, \mcC_{tem}^+)$ with $X_0 \equiv f\in\mcC_c^+(\R)$, then 
       \begin{equation*}
             \bbP_f\bigl(X_t\in \mcC_c^+(\R), \quad \forall t \geq 0\bigr) = 1.
        \end{equation*}
	\end{theorem}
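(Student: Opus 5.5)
The theorem will follow from Proposition~\ref{prop:expectation_RT} by a Borel--Cantelli-type argument over a countable family of time horizons. Since $f\in\mcC_c^+(\R)$, both $R_0<\infty$ and $m_f=\langle f,1\rangle<\infty$. For every fixed $T>0$, \eqref{eq:expectation_RT_est} then gives
\[
\E_f[R_T]\le A(T)\bigl(R_0+\sqrt{R_0}+\beta(T,m_f)\bigr)<\infty ,
\]
because $A(T)<\infty$ and $\beta(T,m_f)$ is finite. A nonnegative random variable with finite expectation is almost surely finite, so $\bbP_f(R_T<\infty)=1$ for each $T$.

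Next I take $T=N\in\N$. The map $T\mapsto R_T$ is nondecreasing, since $R_T$ is a supremum over the set $\{(t,x)\colon t\in[0,T],\,X_t(x)>0\}$, which grows with $T$. Intersecting the countably many full-probability events $\{R_N<\infty\}$, $N\in\N$, yields an event $\Omega_0$ with $\bbP_f(\Omega_0)=1$. On $\Omega_0$, for every $t\ge0$ we pick $N\ge t$ and obtain $\{x\colon X_t(x)>0\}\subset[-R_N,R_N]$.

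It remains to check that on $\Omega_0$ each $X_t$ lies in $\mcC_c^+(\R)$. By the definition of a weak solution, $X_t$ is a continuous nonnegative function (an element of $\mcC_{tem}^+$). Its support is the closure of $\{X_t>0\}$, which is contained in the closed set $[-R_N,R_N]$ and is therefore compact. Hence $X_t\in\mcC_c^+(\R)$ for all $t\ge0$ simultaneously on $\Omega_0$, which proves the claim.

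Two technical points need care. First, measurability of $R_T$, which is needed for the expectation in Proposition~\ref{prop:expectation_RT} to make sense. By continuity of $(t,x)\mapsto X_t(x)$, the supremum can be restricted to rational $t$ and $x$, so $R_T$ is measurable. Second, the uniformity in $t$, which is exactly why the statement is phrased for all $t$ rather than for each fixed $t$; it is handled by working with $R_T$, a supremum over $t\in[0,T]$, together with the countable exhaustion by $T=N$. The substantive work, namely the finiteness of $A(T)$ and of $\beta$, is already contained in Proposition~\ref{prop:expectation_RT}, so I expect no real obstacle here beyond these checks.
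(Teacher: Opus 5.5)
Your proposal is correct and is essentially the paper's argument: the paper also deduces the theorem directly from Proposition~\ref{prop:expectation_RT}, noting that $\E_f[R_T]<\infty$ gives $\bbP_f(X_t\in\mcC_c^+(\R)\ \forall\, 0\le t\le T)=1$ for every $T>0$, which yields the claim for all $t\ge 0$. You simply make explicit the steps the paper leaves implicit: almost sure finiteness of $R_T$, the countable intersection over $T=N\in\N$, and measurability of $R_T$ via joint continuity of $(t,x)\mapsto X_t(x)$.
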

	\noindent We prove Proposition~\ref{prop:expectation_RT} and Theorem~\ref{th:compact_support} in Section~\ref{sec:compact_support}.
    \textcolor{black}{
    \begin{rmk}\label{rem:spde_gen}
        Let $h\colon \R_+ \rightarrow \R_+$ be given by 
        \begin{equation*}
            h(x) = b_1\mathbbm{1}_{x>0} + \int_0^\infty (1-\ee^{-\lambda x})\nu(d\lambda), \quad x\in \R_+,
        \end{equation*}
    where $\nu$ is a finite measure on $(0,\infty)$ and $f\in \mcC^+_c(\R)$. By \cite[Theorem 1.1]{Mytnik25} there exists a unique weak solution to 
    \begin{equation}\label{eq:spde_gen}
    d_tX_t(x) = \frac{1}{2}\Delta X_t(x) +h(X_t(x))+ \sqrt{X_t(x)}\dot{W}(t,x), \quad X_0 = f, x\in \R, t\geq 0,   
    \end{equation}
    in $\mcC(\R_+, \mcC_{tem}^+)$.
    By a standard approximation argument, weak convergence and weak uniqueness established in \cite{Mytnik25} and a comparison theorem (e.g. \cite[Corollary 2.4]{Shiga94}),
    there exists a probability space such that
    \begin{equation*}
       \bbP(\hat{X}_t(x) \geq X_t(x),  \forall t\geq 0, x\in \R)=1,
    \end{equation*}
    where $\hat{X}$ is the unique weak solution to \eqref{eq:SPDE1_sup} with $b_1$ replaced by $\hat{b}_1 = b_1 + \nu((0,\infty))$. Thus, Theorem~\ref{th:compact_support} implies the compact support property for \eqref{eq:spde_gen}.
    \end{rmk}}

 As mentioned above, estimates for $R_T$ allow us to estimate the extinction probability of solutions to \eqref{eq:SPDE1_sup}.
 In particular, for $T>0$, Proposition~\ref{prop:expectation_RT} allows us to compare the total mass of the solution \(X\) on $[0,T]$ with the  solution to
	\begin{equation}\label{eq:sde_compare}
		\tilde{z}_t = \langle f,1\rangle + \delta t + \int_0^t\sqrt{\tilde{z}_s}dB_s, \quad t\in [0,T],
	\end{equation}
	where $(B_t)_{t\geq 0}$ is a standard Brownian motion and $\delta$ is chosen so that $2b_1R_T<\delta$ with high probability. It turns out that, if $T$, $\langle f,1\rangle $ and the diameter of the support of  $f$ are sufficiently small, the parameter $\delta>0$ can be chosen small enough so that $\tilde{z}$ hits zero with positive probability before time $T$. A careful analysis thus yields the following result. 
	\begin{theorem}\label{prop:extinction}
	Let $b_1>0$ and $X$ be the unique weak solution to \eqref{eq:SPDE1_sup} in $\mcC(\R_+, \mcC_{tem}^+)$ with $X_0 \equiv f\in\mcC_c^+(\R)$. 
	Then
		\begin{equation}\label{eq:extinction}
			\mathbbm{P}_f(\langle X_t, 1\rangle = 0) >0, 
		\end{equation}
	for all $t>0$.
	
	\end{theorem}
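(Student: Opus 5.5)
The plan is to compare the total mass $Z_t=\langle X_t,1\rangle$ with the squared Bessel-type diffusion \eqref{eq:sde_compare}. Then we use the Markov property to reduce the case of a general initial condition and general $t$ to the case of small mass, small support and small time.

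\textbf{Semimartingale decomposition of the mass.} Integrating \eqref{eq:SPDE1_sup} against $1$ (justified for compactly supported solutions via Theorem~\ref{th:compact_support} and a cutoff argument) gives
\begin{equation*}
Z_t=\langle f,1\rangle+\int_0^t b_1\Leb(\{x\colon X_s(x)>0\})\,ds+M_t,\qquad \langle M\rangle_t=\int_0^t Z_s\,ds .
\end{equation*}
The drift is bounded by $2b_1R_T$ on $[0,T]$. By a time change, $M_t=\tilde B_{\int_0^t Z_s ds}$, or equivalently $M_t=\int_0^t\sqrt{Z_s}\,dB_s$ for a Brownian motion $B$ on an enlarged space.

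\textbf{Comparison with a squared Bessel process.} Fix $\delta>0$ and set $\tau=\inf\{t\colon 2b_1R_t>\delta\}$. On $[0,\tau\wedge T]$ the drift of $Z$ is at most $\delta$. By the Yamada--Watanabe pathwise comparison theorem (the diffusion coefficient $\sqrt z$ is $1/2$-Hölder), $Z_t\le\tilde z_t$ for $t\le\tau\wedge T$, where $\tilde z$ solves \eqref{eq:sde_compare} with the same $B$. Hence
\begin{equation*}
\bbP_f(Z_T=0)\ \ge\ \bbP\bigl(\tilde z \text{ hits } 0 \text{ before } T\bigr)-\bbP_f(2b_1R_T>\delta).
\end{equation*}
Since $Z$ is absorbed at $0$ (once $X\equiv0$ the indicator drift vanishes, and by uniqueness $X$ stays $0$), it suffices that $Z$ hits zero before $T$.

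For $\delta<1/2$, the process $4\tilde z$ is a squared Bessel process of dimension $4\delta<2$, which hits $0$ almost surely. Moreover, the probability $p(\delta,m,T)$ of hitting zero before $T$ tends to $1$ as $m=\langle f,1\rangle\to0$, for fixed $\delta$ and $T$, by scaling. On the other side, Markov's inequality and Proposition~\ref{prop:expectation_RT} give
\begin{equation*}
\bbP_f(2b_1R_T>\delta)\le \frac{2b_1A(T)}{\delta}\bigl(R_0+\sqrt{R_0}+\beta(T,m_f)\bigr).
\end{equation*}
Choosing $\delta=1/4$ and $T=1$, and then $R_0$ and $m_f$ small enough, makes the right side at most $1/4$ while $p\ge 3/4$. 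So $\bbP_f(Z_T=0)\ge1/2$ for such "small" $f$. Translation invariance of the equation lets us centre the support anywhere, so "small $R_0$" can be replaced by "small support diameter".

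\textbf{Reduction of the general case.} This is the main obstacle: for arbitrary $f$ and arbitrary $t$ we must reach, with positive probability, a state that is small in both mass and support diameter. Short time is handled by scaling, but the scaling $X_t(x)\mapsto \lambda^{-2} X_{\lambda^2 t}(\lambda x)$ changes $b_1$, so the constants must be tracked. I would instead argue directly that for any $f$ and any $s>0$, with positive probability $X_s$ has tiny mass and is supported in a short interval. To do this, cover the support of $f$ by finitely many pieces and use the Markov property at an intermediate time.

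The natural tool is the layer representation of Theorem~\ref{th:convergence_X_bar_n}. One sub-step uses the positive extinction probability of the super-Brownian part $X^0$ in short time (an explicit Laplace functional). Another sub-step bounds the immigrating layers by a process with small total immigration, since immigration over time $s$ is at most $2b_1 R_s s$, which is small in $L^1$ by Proposition~\ref{prop:expectation_RT}.

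I expect the delicate point to be controlling the support of the immigrating layers simultaneously with their mass. I would first try to apply the above comparison with $T$ small, using that $A(T)$ stays bounded while $\beta(T,m)$ is controlled. If necessary, I would iterate the Markov property over a sequence of small time steps so that the mass is pushed below the threshold with positive probability and then killed before time $t$.
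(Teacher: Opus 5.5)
Your small-data step is essentially the paper's Lemma~\ref{lem:extinction_1}: the mass semimartingale, the stopping time on $R_t$, pathwise comparison with the squared Bessel process of dimension $4\delta<2$, and Markov's inequality combined with Proposition~\ref{prop:expectation_RT}. Short times need no scaling there. $\bbP(\tilde\sigma_0\le t)$ depends only on $2m_f/t$, and $R_t\le R_1$ for $t\le 1$, so for fixed $t$ you only need $m_f$ small relative to $t$.

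The gap is the reduction of a general $f$ to small data. You leave this as a plan, and the route you sketch would not obviously work. The event $\{X^0_s\equiv 0\}$ has probability $\exp(-2m_f/s)$, which may be tiny. Meanwhile Proposition~\ref{prop:expectation_RT} gives no smallness for the immigrating layers, since their support radius is at least $R_0$. So you cannot intersect these events by a union bound, and you give no conditional estimate in its place.

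The missing idea is superposition plus comparison:
\begin{itemize}
\item Split $f=\sum_{i=1}^k f_i$ into pieces with small mass, small support diameter and $t\ge 2m_{f_i}/c_\eta$.
\item Let $Z^1,\dots,Z^k$ be independent solutions of \eqref{eq:SPDE1_sup} with $Z^i_0=f_i$.
\item On an enlarged space, $S=\sum_i Z^i$ solves the equation with noise $\sqrt{S}\,\dot W$ and drift $\sum_i b_1\mathbbm{1}_{Z^i>0}\ge b_1\mathbbm{1}_{S>0}$.
\item By approximation, comparison and weak uniqueness, as in Remark~\ref{rem:spde_gen}, $X$ is stochastically dominated by $S$.
\item Each $Z^i$ satisfies the small-data lemma, so independence gives $\bbP_f(\langle X_t,1\rangle=0)\ge\prod_{i}\bbP(\langle Z^i_t,1\rangle=0)\ge 2^{-k}$.
\end{itemize}
Your remark about covering the support by finitely many pieces points in this direction. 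However, without the subadditivity of the indicator drift and the domination by an independent sum, the pieces interact through the drift and cannot be handled separately.
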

	\noindent The proof of the above theorem is deferred to Section~\ref{sec:extinction}. 
    \textcolor{black}{
    \begin{rmk}
        As in Remark~\ref{rem:spde_gen} we can apply standard approximation and weak convergence arguments, weak uniqueness and a comparison theorem to prove that \eqref{eq:extinction} also holds for solutions to \eqref{eq:spde_gen}.
    \end{rmk}}
    
\section{Initial and Boundary Trace Problems}\label{sec:initial_boundary}
In this section, we consider equations of the form
\begin{equation}\label{eq:PDE_mu_psi}
	\begin{split}
		&\partial_t V_t(x) = \frac{1}{2}\Delta V_t(x) - \frac{1}{2}(V_t(x))^2 + \psi(x), \quad t >0, x\in \R\\
		&V_0= \mu,
	\end{split}
\end{equation}
with possibly very irregular $\psi$ or initial data $\mu$. If we want to make the dependence on $\psi$ and $\mu$ clear, we write $V(\mu,\psi)$, and if there is no ambiguity, we may just write $V$, $V(\mu)$ or $V(\psi)$.
This partial differential equation plays a central role in our proofs due to its close relation
with the dual process of super-Brownian motion. 
\begin{proposition}\label{prop:existence}
	Let $\psi, \mu \in \mathcal{C}_c^+(\R)$. Then there exists a unique mild solution to \eqref{eq:PDE_mu_psi}, which is nonnegative.
\end{proposition}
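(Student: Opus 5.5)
We prove Proposition~\ref{prop:existence} with a contraction argument in the mild formulation, followed by comparison to get global bounds. Writing $P_t$ for the heat semigroup with kernel $p_t$, a mild solution is a function $V\in \mcC([0,T],\mcC_b(\R))$ satisfying
\begin{equation*}
V_t = P_t\mu + \int_0^t P_{t-s}\Big(\psi - \tfrac12 V_s^2\Big)\,ds, \quad t\in[0,T].
\end{equation*}
Fix $M > 2\big(\|\mu\|_\infty + T\|\psi\|_\infty\big)$. For $V$ in the closed ball of radius $M$ in $\mcC([0,t_0],\mcC_b(\R))$, the map $\Phi(V)_t := P_t\mu + \int_0^t P_{t-s}(\psi - \frac12 V_s^2)\,ds$ satisfies
\begin{equation*}
\|\Phi(V)_t\|_\infty \le \|\mu\|_\infty + t_0\|\psi\|_\infty + \tfrac12 t_0 M^2,
\qquad
\|\Phi(V)-\Phi(W)\|_\infty \le t_0 M\,\|V-W\|_\infty .
\end{equation*}
Choosing $t_0$ small, depending only on $M$, makes $\Phi$ a contraction of the ball into itself, so Banach's fixed point theorem gives local existence and uniqueness. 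Continuity in time follows from strong continuity of $P_t$ on bounded uniformly continuous functions. Note that $\mu,\psi\in\mcC_c^+$ are uniformly continuous, and the integral term is a continuous function of $t$.

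To extend the solution to all $t\ge0$, we need an a priori bound that does not degenerate, and we get it together with nonnegativity. Nonnegativity is the main point. We handle it by rewriting the equation with a linear absorption: on any interval where $V$ is a bounded mild solution, set $c_s := \frac12 V_s$, which is bounded. Then $V$ solves the linear mild equation $\partial_t V = \frac12\Delta V - c V + \psi$. By the Feynman--Kac representation,
\begin{equation*}
V_t(x) = E_x\Big[\mu(B_t)e^{-\int_0^t c_{t-r}(B_r)dr} + \int_0^t \psi(B_s)\,e^{-\int_0^s c_{t-r}(B_r)dr}\,ds\Big] \ge 0,
\end{equation*}
where $B$ is a Brownian motion started at $x$ with generator $\frac12\Delta$. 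The equivalence of this linear mild equation with the original one is routine, since the difference of the two formulations is a bounded perturbation.

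Alternatively, we can avoid the Feynman--Kac identification and argue as follows. Solve the equation with nonlinearity $-\frac12 V|V|$ in place of $-\frac12 V^2$. By a Gronwall argument on the negative part, $V^-$ satisfies a mild inequality $V^-_t \le \int_0^t P_{t-s}(cV^-_s)\,ds$ with initial datum $0$, which forces $V^-\equiv0$. Hence this solution coincides with a solution of the original equation, and uniqueness identifies it.

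Once $V\ge 0$, we have $-\frac12 V^2\le 0$, so the mild equation gives
\begin{equation*}
0\le V_t\le P_t\mu+\int_0^tP_{t-s}\psi\,ds \le \|\mu\|_\infty + t\|\psi\|_\infty .
\end{equation*}
This a priori bound holds on every existence interval. With $M$ chosen as above, the step length $t_0$ is uniform on $[0,T]$, so finitely many restarts cover $[0,T]$. Since $T$ is arbitrary, this gives a global solution. Global uniqueness follows from the local Lipschitz estimate and Gronwall, since any two solutions are bounded on $[0,T]$.

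The step we expect to need the most care is nonnegativity and its interplay with the global extension. The quadratic term is only locally Lipschitz, so the a priori bound relies on the sign of $V$. Our plan is to first establish nonnegativity locally, via Feynman--Kac with the bounded potential $\frac12 V$, and then iterate.
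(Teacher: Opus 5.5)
Your proof is correct. The paper states Proposition~\ref{prop:existence} without proof, treating it as a standard fact about the log-Laplace equation, so there is no argument of the paper's to compare against.

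What you give is the usual self-contained route:
\begin{itemize}
\item Picard iteration for the locally Lipschitz nonlinearity in $\mcC([0,t_0],\mcC_b(\R))$.
\item Positivity from the Feynman--Kac representation with the bounded potential $c=\frac12 V$. Its sign does not matter, since only positivity of $e^{-\int c}$ is used.
\item The resulting a priori bound $0\le V_t\le \|\mu\|_\infty+t\|\psi\|_\infty$.
\item Restarts with a step length that is uniform on $[0,T]$.
\end{itemize}
The interplay you flag at the end is handled correctly. Positivity is proved on each existence interval, where $V$ is already bounded, before the bound is used to restart.

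Two minor remarks.

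First, your alternative argument with $-\frac12 V|V|$ is under-justified as written. Taking negative parts directly in the mild equation only gives $V_t^-\le\int_0^tP_{t-s}\bigl(\frac12 (V_s^+)^2\bigr)ds$, which does not close. The inequality you state for $V^-$ does hold, but to get it you should first pass to the exponentially shifted mild form
\[
V_t=e^{-\lambda t}P_t\mu+\int_0^t e^{-\lambda(t-s)}P_{t-s}\bigl(\psi+(\lambda-c_s)V_s\bigr)\,ds,\qquad \lambda\ge\sup|c|,
\]
which is equivalent to the original one for bounded $c$. Since $\lambda-c_s\ge 0$ and $\mu,\psi\ge0$, this yields $V_t^-\le 2\lambda\int_0^tP_{t-s}V_s^-\,ds$, and Gronwall gives $V^-\equiv 0$. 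This works directly with $c=\frac12 V$, so modifying the nonlinearity is unnecessary.

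Second, your uniqueness holds within locally bounded mild solutions. This covers the statement in its natural reading. Any nonnegative mild solution satisfies $V_t\le P_t\mu+\int_0^tP_{t-s}\psi\,ds$ by the mild equation, so it is automatically bounded on $[0,T]$ and falls in your uniqueness class.
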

\subsection{Initial Trace Problems}
    In this section, we discuss the case where $\mu$ is irregular. That is, we consider the equation
\begin{equation}\label{eq:parabolic_eq}
	\partial_t V_t = \frac{1}{2}\Delta V_t - \frac{1}{2}V_t^2, \quad t\geq 0
\end{equation}
on $\R$ with irregular initial data. 
For a more thorough discussion, we refer the reader to \cite{Marcus99} and \cite{LeGall96}. Here, we give only  a brief overview.
More precisely, we discuss some basic properties of solutions to 
\begin{equation}\label{eq:PDE}
	\begin{cases}
		&\partial_t V_t(\mu) = \frac{1}{2}\Delta V_t(\mu) - \frac{1}{2}V_t(\mu)^2, \quad (t,x ) \in (0,\infty) \times \R \\
		& V_t(\mu) \Rightarrow \mu, \text{ as } t \downarrow 0,
	\end{cases}
\end{equation}
where $\mu$ is a finite measure, as well as solutions to the initial trace problem
\begin{equation}\label{eq:initial_trace}
	\begin{cases}
		&\partial_t \mathcal{W}_t^{x_0} = \frac{1}{2}\Delta \mathcal{W}_t^{x_0} - \frac{1}{2}(\mathcal{W}_t^{x_0})^2, \quad (t,x ) \in (0,\infty) \times \R\\
		& \lim_{t \rightarrow 0}\int_{x_0-r}^{x_0+r} \mathcal{W}_t^{x_0}(x)dx = \infty, \forall r >0\\
		& \lim_{t \rightarrow 0} \int \phi \mathcal{W}_t^{x_0}(x) dx = 0, \forall \phi \in \mathcal{C}_c(\R\backslash\{x_0\}), 
	\end{cases}
\end{equation}

First, we get existence and uniqueness for \eqref{eq:PDE} by \cite[Lemma 2.1]{Mytnik02}, which we restate in the following proposition.
\begin{proposition}
	Let $\mu$ be a finite measure. Then there exists a unique solution $V(\mu)$ to \eqref{eq:PDE} in  $L^1_{loc}((0,\infty) \times \R)$ satisfying
	\begin{equation}\label{eq:sol_rep_V}
		V_t(\mu)(x) = (S_t\mu)(x) -\int_0^t \frac{1}{2}\Big(S_{t-s}V_s(\mu)^2\Big)(x)ds, \quad t>0, x\in \R,
	\end{equation}
	where $(S_t)_{t\geq 0}$ is the heat semigroup with transition density $(p_t)_{t \geq 0}$ given by 
    \begin{equation*}
         p_t(x) = \frac{1}{\sqrt{2 \pi t}}\exp(-x^2/(2t)), \quad t>0, x\in\R.
    \end{equation*}
   
\end{proposition}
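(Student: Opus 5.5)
Since the statement is quoted from \cite[Lemma 2.1]{Mytnik02}, the natural plan is to reproduce the standard argument in three steps: approximate $\mu$ by smooth data, show the approximating solutions converge to a mild solution of \eqref{eq:sol_rep_V}, and prove uniqueness in the mild class.

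\textbf{Existence.} Let $\mu_n := S_{1/n}\mu$. These are bounded, smooth, integrable and nonnegative, with $\|\mu_n\|_{L^1}=\mu(\R)$, and $\mu_n\Rightarrow\mu$. A Picard iteration in $\mcC_b$ (in the spirit of Proposition~\ref{prop:existence}, with $\psi\equiv 0$) gives a unique global nonnegative mild solution $V^n$. The key a priori bound is comparison with the heat flow:
\[
0\le V^n_t\le S_t\mu_n = S_{t+1/n}\mu .
\]
Both inequalities come from the maximum principle, since the nonlinearity $-\tfrac12 V^2$ is nonpositive and vanishes at $0$.

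From this bound, $V^n_s(x)^2 \le (S_s\mu)(x)\,\|p_s\|_\infty\,\mu(\R)$, so
\[
\int_0^t \bigl\|S_{t-s}(V^n_s)^2\bigr\|_{L^1}\,ds \;\le\; \mu(\R)^2\int_0^t (2\pi s)^{-1/2}\,ds \;<\;\infty .
\]
This gives a uniform $L^1$ domination of the nonlinear term near $s=0$.

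To get convergence:
\begin{itemize}
\item Interior parabolic estimates on $[\varepsilon,T]\times[-K,K]$, where $V^n$ is bounded by $(2\pi\varepsilon)^{-1/2}\mu(\R)$, give local equicontinuity. A subsequence then converges locally uniformly on $(0,\infty)\times\R$ to some $V$.
\item Alternatively, and more cleanly, one can use monotonicity/contraction. For two solutions, $|V-\tilde V|$ is a subsolution of the heat equation, so $\|V^n_t-V^m_t\|_{L^1}\le \|\mu_n-\mu_m\|_{L^1}$. This does not yet give a Cauchy sequence, because $\mu$ is only a measure, so I would rely on the compactness route.
\end{itemize}
To pass to the limit in \eqref{eq:sol_rep_V}, use dominated convergence. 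The integrand $p_{t-s}(x-\cdot)\,V^n_s(\cdot)^2$ is dominated by $p_{t-s}(x-\cdot)\,(S_{s+1/n}\mu)^2$. Near $s=0$ this needs care; the bound above controls it in $L^1_x$, which suffices for $L^1_{loc}$ convergence. Finally, $V_t\Rightarrow\mu$ follows from $V_t\le S_t\mu$ together with $\langle S_t\mu - V_t,\phi\rangle \le \|\phi\|_\infty\,\mu(\R)^2\int_0^t (2\pi s)^{-1/2}\,ds \to 0$.

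\textbf{Uniqueness.} Let $V,\tilde V$ be two $L^1_{loc}$ solutions of \eqref{eq:sol_rep_V}. Both are nonnegative and bounded by $S_t\mu$ (nonnegativity I would derive first from the mild formula). Setting $D=V-\tilde V$, we have
\[
|D_t| \le \tfrac12\int_0^t S_{t-s}\bigl(|D_s|\,(V_s+\tilde V_s)\bigr)\,ds \le \int_0^t S_{t-s}\bigl(|D_s|\,S_s\mu\bigr)\,ds .
\]
Using $\|S_s\mu\|_\infty\le C s^{-1/2}$ gives $\|D_t\|_{L^1}\le C\int_0^t s^{-1/2}\|D_s\|_{L^1}\,ds$. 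The $L^1$ norms are finite and bounded by $2\mu(\R)$, and the kernel $s^{-1/2}$ is integrable, so a Gronwall-type argument for integrable singular kernels forces $D\equiv 0$.

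\textbf{Main obstacle.} The delicate point is the singularity at $t=0$. Nonnegativity of an arbitrary mild solution, which is needed for the bound $V\le S_t\mu$ in the uniqueness step, is not automatic. I would establish it by showing that any solution of \eqref{eq:sol_rep_V} satisfies $V_t = V(V_\varepsilon)_{t-\varepsilon}$ for $t>\varepsilon$ (restarting from a regular time), and then applying the classical comparison for regular data on $[\varepsilon,\infty)$.
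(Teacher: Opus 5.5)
The paper gives no proof of this statement; it is quoted from \cite[Lemma 2.1]{Mytnik02}. Your reconstruction (smooth approximation, compactness, passage to the limit in the mild formula, singular Gronwall) is the standard argument, so it is judged on its own merits. The existence part is sound, with one small correction. The a priori bound should read $0\le V^n_s\le S_{s+1/n}\mu$, not $S_s\mu$. What you actually use is $\|V^n_s\|_{L^2}^2\le \mu(\R)^2(2\pi s)^{-1/2}$, and that still holds. Since $p_{t-s}(x-\cdot)\le(\pi t)^{-1/2}$ for $s\le t/2$, it gives $\sup_n\int_0^\delta S_{t-s}(V^n_s)^2(x)\,ds\to0$ as $\delta\downarrow0$. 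So you can pass to the limit in the mild formula pointwise, not merely in $L^1_{loc}$. Your uniqueness argument is also correct \emph{within the class of nonnegative solutions}. The upper bound $V_t\le S_t\mu$ holds for any solution, directly from the mild formula. With nonnegativity, $\|D_s\|_{L^1}\le 2\mu(\R)$, and the singular Gronwall step closes.

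The step that does not work is the one you flag yourself: nonnegativity of an arbitrary, possibly signed, $L^1_{loc}$ solution. Restarting at time $\varepsilon$ and applying comparison on $[\varepsilon,\infty)$ only propagates the sign of $V_\varepsilon$. To conclude $V\ge0$ you would already need $V_\varepsilon\ge0$, which is exactly what is to be proved. You also do not know that $V_\varepsilon$ is bounded below, so calling it regular data is unjustified; only $V_\varepsilon\le S_\varepsilon\mu$ is automatic. Without a lower bound, the factor $|V_s+\tilde V_s|$ in your difference estimate is uncontrolled, so the uniqueness proof collapses in the signed class.

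The simplest fix is to build nonnegativity into the solution concept. This matches the cited source, the log-Laplace role of $V(\mu)$, and every use the paper makes of it; the requirement $V_t(\mu)\Rightarrow\mu$ also suggests it. With that convention your proof is complete.

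If you want uniqueness among signed solutions, you need a genuine argument that no negative part can appear. One starting point is the bound $V_\varepsilon^-\le\frac12\int_0^\varepsilon S_{\varepsilon-s}V_s^2\,ds$. This tends to $0$ in Gaussian-weighted $L^1$ as $\varepsilon\downarrow0$, because the mild integral at any later point $(T,x_0)$ is finite. You would then combine it with a small-data stability estimate for $u_t=\frac12\Delta u+\frac12u^2$. Comparison from $V_\varepsilon$ alone cannot supply this.
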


The following lemma is a well-known fact about \eqref{eq:PDE} (see, for instance, \cite[Lemma 2.1]{Mytnik02}).
\begin{lemma}\label{lem:est_pde}
	Let $\mu$ be a finite measure. Then it holds that 
	\begin{equation*}
		V_t(\mu)(x) \leq S_t\mu(x),
	\end{equation*}
	for all $t > 0$ and $x\in \R$.
\end{lemma}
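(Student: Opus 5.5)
The approach is a comparison argument applied directly to the mild formulation \eqref{eq:sol_rep_V}. Since $V_t(\mu)$ is the unique solution of \eqref{eq:PDE} in $L^1_{loc}((0,\infty)\times\R)$, it suffices to show $V_t(\mu)\ge 0$. Once that is known, the integral term $\int_0^t \frac12 S_{t-s}V_s(\mu)^2\,ds$ in \eqref{eq:sol_rep_V} is a nonnegative quantity being subtracted, which gives $V_t(\mu)(x)\le S_t\mu(x)$ immediately. All the work is therefore in nonnegativity, and I would obtain it by approximating $\mu$ with nice initial data.

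First, I would approximate $\mu$ by $\mu_\varepsilon := S_\varepsilon\mu$ truncated by cutoffs $\phi_n\in\mcC_c^+(\R)$, producing functions $\mu_{n,\varepsilon}\in\mcC_c^+(\R)$ that converge weakly to $\mu$ and satisfy $\langle \mu_{n,\varepsilon},1\rangle\le\langle\mu,1\rangle$. Proposition~\ref{prop:existence} with $\psi\equiv0$ gives a unique nonnegative mild solution $V^{n,\varepsilon}$ with $V^{n,\varepsilon}_0=\mu_{n,\varepsilon}$. For these solutions, nonnegativity together with the mild equation gives
\[
0\le V^{n,\varepsilon}_t\le S_t\mu_{n,\varepsilon}\le S_{t+\varepsilon}\mu .
\]

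Second, I would pass to the limit as $n\to\infty$ and $\varepsilon\downarrow0$. The bound $V^{n,\varepsilon}_t\le S_{t+\varepsilon}\mu\le (2\pi t)^{-1/2}\langle\mu,1\rangle$ is uniform on $[\delta,\infty)\times\R$ for every $\delta>0$, and the squared terms are dominated by $(S_{s}\mu)^2$. This quantity is integrable in $s$ near $0$ after applying $S_{t-s}$, because
\[
S_{t-s}(S_s\mu)^2\le \|S_s\mu\|_\infty\, S_t\mu\le c\,s^{-1/2}S_t\mu .
\]
Using the monotonicity of the solution map in the initial data (comparison for the smooth problem), or alternatively compactness from the parabolic smoothing of the mild equation, I would extract a limit $\tilde V$. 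Dominated convergence in each term then shows that $\tilde V$ satisfies \eqref{eq:sol_rep_V} with data $\mu$, and that $\tilde V\ge0$.

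Third, uniqueness in $L^1_{loc}$ identifies $\tilde V=V(\mu)$. Then $V(\mu)\ge0$, and the subtraction argument from the first paragraph gives $V_t(\mu)\le S_t\mu$.

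The main obstacle is the limit passage in the nonlinear term. The key point is controlling $(S_s\mu)^2$ near $s=0$, and the bound $S_{t-s}(S_s\mu)^2\le c\,s^{-1/2}S_t\mu$ is what I would try first, since $s^{-1/2}$ is integrable near $0$. If pointwise convergence is delicate, I would instead argue by monotone limits along an increasing sequence of initial data, using the comparison principle for the smooth problem.
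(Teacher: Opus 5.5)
Your conclusion is right, but the nonnegativity detour is unnecessary, and your stated reason for needing it is mistaken. The term subtracted in \eqref{eq:sol_rep_V} is
\[
\frac12\int_0^t \bigl(S_{t-s}V_s(\mu)^2\bigr)(x)\,ds=\frac12\int_0^t\int_\R p_{t-s}(x-y)\,\bigl(V_s(\mu)(y)\bigr)^2\,dy\,ds .
\]
It is nonnegative whatever the sign of $V_s(\mu)$, because it involves a square against a positive kernel. So $V_t(\mu)(x)\le S_t\mu(x)$ follows in one line from the mild representation, for every $t>0$ and $x\in\R$. Your own last step is therefore already the complete proof. The paper gives no argument and quotes the bound as a well-known fact from the cited reference. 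The fact $V_t(\mu)\ge 0$ is true, but it matters only for a lower bound, not for this upper bound. The same applies to your intermediate claim $V^{n,\varepsilon}_t\le S_t\mu_{n,\varepsilon}$, which also needs no nonnegativity.

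Since you made the approximation step load-bearing, note that it is not airtight as written. First, you say the squared terms are dominated by $(S_s\mu)^2$, but what you actually have is $V^{n,\varepsilon}_s\le S_{s+\varepsilon}\mu$. In the tails, $S_{s+\varepsilon}\mu$ is not bounded by a multiple of $S_s\mu$ (take $\mu=\delta_0$). The correct domination uses $(S_{s+\varepsilon}\mu)^2\le \|S_{s+\varepsilon}\mu\|_\infty\, S_{s+\varepsilon}\mu$ together with $S_{t-s}S_{s+\varepsilon}\mu=S_{t+\varepsilon}\mu$. Second, the limit $\varepsilon\downarrow 0$ cannot come from monotonicity in the initial data, because $S_\varepsilon\mu$ is not monotone in $\varepsilon$. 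You would have to commit to the compactness route and actually supply the equicontinuity estimate. Both points are fixable, but there is no reason to fix them: drop the approximation entirely and keep only the observation that the subtracted term is nonnegative.
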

The following lemma, taken from \cite[Lemma 2.6]{Mytnik02}, provides estimates that we use frequently. 
\begin{lemma}\label{lem:pde_est}
	Let $\mu$ and $\eta$ be two finite measures. Then the following holds:
	\begin{enumerate}
		\item[(i)]
		For all $t > 0$ it holds that
		\begin{equation*}
			V_t(\mu+ \eta)(x) \leq V_t(\mu)(x) + V_t (\eta)(x), \quad x\in\R.
		\end{equation*}
		\item[(ii)] If $\mu \leq \eta$, then 
		\begin{equation*}
			V_t(\mu)(x) \leq V_t(\eta)(x),\quad x\in \R,
		\end{equation*}
		for all $t > 0$.
	\end{enumerate}
\end{lemma}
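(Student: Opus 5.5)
We prove both parts through the mild formulation \eqref{eq:sol_rep_V} and a comparison principle for \eqref{eq:PDE}, working first with smooth data and then passing to finite measures. Fix $\mu,\eta$ finite measures and let $\mu_\epsilon=S_\epsilon\mu$, $\eta_\epsilon=S_\epsilon\eta$. These are bounded smooth integrable functions, so the associated solutions $V(\mu_\epsilon)$ are classical, bounded, nonnegative solutions of \eqref{eq:parabolic_eq}. For such data the comparison principle for the semilinear heat equation with locally Lipschitz nonlinearity $-\frac12 v^2$ applies, and both inequalities reduce to it.

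For (ii), assume $\mu\le\eta$, so $\mu_\epsilon\le\eta_\epsilon$. Set $w=V(\eta_\epsilon)-V(\mu_\epsilon)$. It solves the linear equation
\[
\partial_t w=\tfrac12\Delta w-\tfrac12\big(V(\eta_\epsilon)+V(\mu_\epsilon)\big)w,\qquad w_0\ge 0 .
\]
The potential $c=\frac12(V(\eta_\epsilon)+V(\mu_\epsilon))$ is bounded and nonnegative. By the Feynman--Kac representation,
\[
w_t(x)=E_x\Big[w_0(B_t)\,\ee^{-\int_0^t c(t-s,B_s)ds}\Big]\ge 0 ,
\]
equivalently by the weak maximum principle, which is valid because $w$ is bounded. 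Hence $V(\mu_\epsilon)\le V(\eta_\epsilon)$.

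For (i), set $u=V(\mu_\epsilon)+V(\eta_\epsilon)$. Using nonnegativity of both solutions (Proposition~\ref{prop:existence}, or Lemma~\ref{lem:est_pde} together with positivity),
\[
\partial_t u-\tfrac12\Delta u+\tfrac12u^2=V(\mu_\epsilon)V(\eta_\epsilon)\ge0 ,
\]
so $u$ is a supersolution with initial value $\mu_\epsilon+\eta_\epsilon$. Then $w=u-V(\mu_\epsilon+\eta_\epsilon)$ satisfies $\partial_tw-\frac12\Delta w+\frac12(u+V(\mu_\epsilon+\eta_\epsilon))w\ge0$ with $w_0=0$. The same Feynman--Kac/maximum-principle argument gives $w\ge0$.

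The main obstacle is removing the regularization $\epsilon\downarrow0$. I would show $V_t(S_\epsilon\mu)\to V_t(\mu)$ pointwise for $t>0$, with the following two-step plan.
\begin{enumerate}
\item By the semigroup (flow) property and uniqueness of \eqref{eq:sol_rep_V}, $V_{t}(\mu)$ restarted at time $s>0$ is the solution with datum $V_s(\mu)$.
\item Compare $V_t(S_\epsilon\mu)$ with $V_{t+\epsilon}(\mu)$. Their difference $\delta$ satisfies a linear equation with nonnegative potential, so by Feynman--Kac $|\delta_t|\le S_t|S_\epsilon\mu-V_\epsilon(\mu)|$. By \eqref{eq:sol_rep_V},
\[
S_\epsilon\mu-V_\epsilon(\mu)=\tfrac12\int_0^\epsilon S_{\epsilon-s}V_s(\mu)^2\,ds .
\]
Its $S_t$-image tends to $0$ for fixed $t>0$. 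This uses $V_s\le S_s\mu$ (Lemma~\ref{lem:est_pde}) and $\int\!\int_0^\epsilon V_s^2\,ds\,dx=\langle\mu,1\rangle-\langle V_\epsilon,1\rangle\to0$, which follows from mass conservation in \eqref{eq:sol_rep_V} and weak convergence $V_\epsilon\Rightarrow\mu$.
\end{enumerate}
Combining the two steps with continuity of $t\mapsto V_t(\mu)(x)$ for $t>0$, the pointwise inequalities from (i) and (ii) pass to the limit and hold for all $t>0$, $x\in\R$.
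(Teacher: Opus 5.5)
Your proof is correct. The paper gives no argument for this lemma; it quotes it from \cite[Lemma 2.6]{Mytnik02}. So your proposal is a self-contained substitute rather than a reconstruction of the paper's reasoning.

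The structure works as follows. Mollifying by $S_\epsilon$ respects addition, since $S_\epsilon(\mu+\eta)=\mu_\epsilon+\eta_\epsilon$. After that, both (ii) and (i) reduce to sign preservation for a linear equation with a bounded nonnegative potential. For (i), your identity $\partial_t u-\frac12\Delta u+\frac12u^2=V(\mu_\epsilon)V(\eta_\epsilon)\ge 0$ is exactly what is needed. Removing the mollifier via the flow property $V_{t+\epsilon}(\mu)=V_t(V_\epsilon(\mu))$ and Feynman--Kac also gives a quantitative stability bound:
\[
0\le V_t(S_\epsilon\mu)(x)-V_{t+\epsilon}(\mu)(x)\le (2\pi t)^{-1/2}\bigl(\langle\mu,1\rangle-\langle V_\epsilon(\mu),1\rangle\bigr).
\]
The citation buys brevity. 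Your argument buys independence: everything is derived from the mild formulation \eqref{eq:sol_rep_V}, Lemma~\ref{lem:est_pde}, and standard facts about bounded solutions of semilinear heat equations.

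Three small points need tightening.
\begin{enumerate}
\item The mass identity should read $\frac12\int\!\int_0^\epsilon V_s^2\,ds\,dx=\langle\mu,1\rangle-\langle V_\epsilon(\mu),1\rangle$; you dropped the factor $\frac12$. You also do not need $V_\epsilon\Rightarrow\mu$ here. The identity at one fixed $\epsilon_0$ shows $\int_0^{\epsilon_0}\!\int V_s^2\,dx\,ds\le 2\langle\mu,1\rangle<\infty$, and absolute continuity of the integral then gives $\int_0^\epsilon\!\int V_s^2\,dx\,ds\to 0$.
\item Proposition~\ref{prop:existence} only covers $\mathcal{C}_c^+$ data, and $S_\epsilon\mu$ is not compactly supported. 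So nonnegativity and two-sided boundedness of $V(S_\epsilon\mu)$ need their own line. For example, use a local contraction argument, then Feynman--Kac with potential $\frac12 V$, then globalize using $V\le S_t\mu_\epsilon$.
\item You should justify continuity of $t\mapsto V_t(\mu)(x)$ on $(0,\infty)$, which you use in the last step. The same flow property does it: on $[s,\infty)$, $V(\mu)$ is a bounded mild solution with bounded datum $V_s(\mu)\le S_s\mu$, hence smooth.
\end{enumerate}
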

The following proposition is a version of  Theorem~1 from \cite{Brezis95}.
\begin{proposition}\label{prp:very_sing_asymp}
	For any $x_0\in\R$, there exists a unique nonnegative solution to \eqref{eq:initial_trace} such that 
	\begin{equation}\label{eq:def_W}
		 \mathcal{W}_t^{x_0}(x) = t^{-1} \xi(t^{-1/2} |x-x_0|),
	\end{equation}
	where $\xi \colon [0,\infty) \rightarrow [0,\infty)$ is a smooth function, independent of $x_0$, satisfying
	\begin{equation}\label{eq:est_f}
		\xi(x) = C\ee^{-\frac{ x^2}{2}}x\Big( 1+ o(x^{-2})\Big), 
	\end{equation}
	as $x \rightarrow \infty$. Also, there exists a constant $\constFromEq{c}{eq:est_W_int}>0$ so that 
		\begin{equation}\label{eq:est_W_int}
			\int_\R \mathcal{W}^{x_0}_t(x)dx \leq \constFromEq{c}{eq:est_W_int}t^{-1/2}, \quad t>0,
		\end{equation}
    for every $x_0\in\R$.
\end{proposition}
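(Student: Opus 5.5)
The statement to prove is Proposition~\ref{prp:very_sing_asymp}. The plan is to reduce it to the self-similar (very singular) solution of Brezis, Peletier and Terman \cite{Brezis95} by a scaling change of variables. Then we verify the initial-trace conditions in \eqref{eq:initial_trace} from the asymptotics \eqref{eq:est_f}, and obtain uniqueness from the uniqueness part of \cite{Brezis95} (or from \cite{Marcus99}).

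\textbf{Step 1 (existence via scaling).} We look for solutions of the form $\mathcal W_t(x)=t^{-1}\xi(t^{-1/2}|x-x_0|)$. With $\eta=t^{-1/2}|x-x_0|$, the equation $\partial_t\mathcal W=\frac12\Delta\mathcal W-\frac12\mathcal W^2$ becomes the ODE
\[
\tfrac12\xi''+\tfrac12\eta\xi'+\xi-\tfrac12\xi^2=0,\qquad \xi'(0)=0 .
\]
Multiplying through by $2$ gives $\xi''+\eta\xi'+2\xi-\xi^2=0$. Up to a constant rescaling of $\xi$ and $\eta$, this is exactly the ODE in \cite{Brezis95} for $u_t=\Delta u-u^p$ with $p=2$ in dimension one. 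Since $p=2<1+2/N=3$, their Theorem~1 gives a unique positive smooth solution decaying at infinity. The Gaussian-type tail $\xi(\eta)=C\ee^{-\eta^2/2}\eta(1+o(\eta^{-2}))$ then follows from their asymptotic analysis. Indeed, the linearized equation $\xi''+\eta\xi'+2\xi=0$ has the decaying solution $\eta\ee^{-\eta^2/2}$, and one checks this with a WKB/variation-of-constants argument, treating $\xi^2$ as a perturbation of order $\ee^{-\eta^2}$. Independence of $x_0$ is immediate from translation invariance.

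\textbf{Step 2 (initial trace and mass bound).} Substituting $x=x_0+t^{1/2}y$ gives
\[
\int_{\R}\mathcal W_t^{x_0}(x)\,dx=t^{-1/2}\int_\R\xi(|y|)\,dy,
\]
and the integral is finite by \eqref{eq:est_f}. This proves \eqref{eq:est_W_int} with $c=\int_\R\xi(|y|)dy$. The same substitution shows that, for every $r>0$, $\int_{x_0-r}^{x_0+r}\mathcal W_t^{x_0}\,dx=t^{-1/2}\int_{|y|<r t^{-1/2}}\xi(|y|)\,dy\to\infty$. For $\phi\in\mcC_c(\R\setminus\{x_0\})$ supported in $\{|x-x_0|\ge\delta\}$, the tail estimate gives
\[
\Big|\int\phi\,\mathcal W_t^{x_0}\Big|\le \|\phi\|_\infty\, t^{-1/2}\int_{|y|\ge \delta t^{-1/2}}\xi(|y|)\,dy\lesssim t^{-1/2}\,\delta t^{-1/2}\ee^{-\delta^2/(2t)}\to0 .
\]

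\textbf{Step 3 (uniqueness), the main obstacle.} Among \emph{self-similar} solutions, uniqueness is just uniqueness for the ODE, which \cite{Brezis95} supplies. Uniqueness among all nonnegative solutions of \eqref{eq:initial_trace} is the delicate part. For it I would invoke the classification of initial traces for $\partial_t u=\frac12\Delta u-\frac12u^2$ in \cite{Marcus99} (subcritical case $N=1<3$): a solution with singular trace concentrated at the single point $x_0$ is unique and equals $\lim_{k\to\infty}V(k\delta_{x_0})$.

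If a direct argument is preferred instead, I would proceed in three steps:
\begin{itemize}
\item Show that any solution $\mathcal W$ of \eqref{eq:initial_trace} dominates $V_t(k\delta_{x_0})$ for all $k$, by comparing on $[\epsilon,\infty)$ using the mass blow-up.
\item Show that $\mathcal W$ is dominated by the maximal solution with trace vanishing off $x_0$, using Keller–Osserman-type barriers built from solutions of \eqref{eq:parabolic_infinity_1} on intervals avoiding $x_0$.
\item Prove that the minimal and maximal solutions coincide; scaling invariance forces both to be self-similar, so the ODE uniqueness concludes.
\end{itemize}
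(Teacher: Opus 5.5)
Your proposal is correct and follows essentially the paper's route: the paper simply invokes Theorem~1 of \cite{Brezis95}. Your Steps 1--2 spell out the rescaling to the Brezis--Peletier--Terman ODE and the elementary scaling computation behind \eqref{eq:est_W_int} and the trace conditions in \eqref{eq:initial_trace}, and your appeal to \cite{Marcus99} in Step 3 covers uniqueness among all nonnegative solutions, which the paper leaves to the literature. One small correction: the uniqueness class in \cite{Brezis95} is $\eta^{2}\xi(\eta)\to 0$, not mere decay, because $\xi''+\eta\xi'+2\xi=0$ also has a solution behaving like $\eta^{-2}$ and the nonlinear ODE has positive profiles with $\xi(\eta)\sim c\,\eta^{-2}$, which would violate the vanishing trace away from $x_0$.
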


In particular, by Lemma~\ref{lem:pde_est} and \cite[Corollary 3, Lemma 4]{Kamin95} it follows that $V_t(n\delta_x)(y)$ converges monotonically to $\mathcal{W}_t^x(y)$ for all $t > 0$
and $x,y\in \R$. Therefore, 
\begin{equation}\label{eq:VW_ineq}
	V_t(n\delta_x)(y) \leq \mathcal{W}_t^x(y), \forall t > 0, x,y\in\R, \quad n \geq 1,
\end{equation}
which we shall frequently use in subsequent sections.
\subsection{Boundary Trace Problems}
In this section, we review the case where $\psi$ may be infinite on sets of positive Lebesgue measure. 
The section is based on results from \cite{Iscoe86, Iscoe89,Dawson89}.\\
The following proposition is Theorem 3.3 from \cite{Iscoe86}. 

	\begin{proposition}\label{prop:monotonicity_t}
		Let $\mu \equiv 0$ and $\psi \in \mcC_c^+(\R)$. Then the solution $V$ to \eqref{eq:PDE_mu_psi} is nondecreasing in $t$ and, as $t\rightarrow\infty$,  converges uniformly on compacts to the unique mild solution to 
		\begin{equation*}
			\frac{1}{2}\Delta u(x) -\frac{1}{2}(u(x))^2 + \psi(x ) = 0, \quad x \in \R. 
		\end{equation*}
	\end{proposition}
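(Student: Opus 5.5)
We aim to prove Proposition~\ref{prop:monotonicity_t}, that is, monotonicity in $t$ of the mild solution $V_t = V_t(0,\psi)$ of \eqref{eq:PDE_mu_psi} and its locally uniform convergence to the stationary solution. The plan has three steps: monotonicity by a comparison argument, an a priori bound giving a finite limit with compactness, and identification of the limit as the unique mild solution of the elliptic problem.

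\emph{Step 1 (monotonicity).} Fix $h>0$ and set $U_t := V_{t+h}$. Then $U$ is the mild solution of \eqref{eq:PDE_mu_psi} with the same $\psi$ and initial datum $V_h$. By Proposition~\ref{prop:existence}, $V_h \geq 0 = V_0$. The difference $D_t = U_t - V_t$ satisfies the linear equation
\begin{equation*}
\partial_t D = \tfrac12\Delta D - \tfrac12 (U+V) D, \qquad D_0 = V_h \ge 0 .
\end{equation*}
The potential $\tfrac12(U+V)$ is nonnegative and locally bounded, since the mild solutions are bounded on $[0,T]\times\R$ by $\|\psi\|_\infty T$. A Feynman--Kac representation therefore gives
\begin{equation*}
D_t(x) = E_x\Big[V_h(B_t)\, e^{-\frac12\int_0^t (U+V)(t-s,B_s)\,ds}\Big] \ge 0 .
\end{equation*}
Hence $V_{t+h}\ge V_t$ for all $t$ and $h$.

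\emph{Step 2 (uniform bound and compactness).} We need a bound on $V_t$ that does not depend on $t$. Let $K = \supp\psi \subset [-L,L]$.
\begin{itemize}
\item Near $K$, compare with a stationary supersolution. Take $u_r$, the solution of \eqref{eq:elliptic_infinity_1} on $(-r,r)$ with $r>L$ (suitably rescaled for the factor $\tfrac12$), plus a constant. For instance $w = \max(\sqrt{2\|\psi\|_\infty}, \cdot)$ handles the region where $V^2 \ge 2\|\psi\|_\infty$. A cleaner route: any constant $c \ge \sqrt{2\|\psi\|_\infty}$ is a stationary supersolution, since $-\tfrac12 c^2 + \psi \le 0$. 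The comparison principle (as in Step 1, applied to $c - V_t$) then gives $V_t \le \sqrt{2\|\psi\|_\infty}$ uniformly in $t$ and $x$.
\item Away from $K$, $V$ solves the homogeneous equation. On $|x|>L$ it is dominated by the boundary-blowup solution on half-lines, or simply by the constant bound, which suffices.
\end{itemize}
Thus the limit $u = \lim_{t\to\infty} V_t$ exists pointwise and is bounded. Interior parabolic estimates applied to the mild formulation, using Schauder or heat-kernel gradient bounds with a bounded source $-\tfrac12 V^2 + \psi$, show that $\{V_t(\cdot)\}_{t\ge1}$ is equicontinuous on compacts. Dini's theorem (or Arzel\`a--Ascoli combined with monotonicity) then upgrades pointwise convergence to convergence that is uniform on compacts.

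\emph{Step 3 (identify the limit).} Use the semigroup property of the mild equation:
\begin{equation*}
V_{t+s} = S_s V_t + \int_0^s S_{s-r}\big(\psi - \tfrac12 V_{t+r}^2\big)\,dr .
\end{equation*}
Let $t\to\infty$ with bounded convergence. This gives
\begin{equation*}
u = S_s u + \int_0^s S_{r}\big(\psi-\tfrac12 u^2\big)\,dr \quad \text{for every } s,
\end{equation*}
which is the mild or distributional form of $\tfrac12\Delta u - \tfrac12 u^2 + \psi = 0$. Elliptic regularity then makes $u$ a classical solution.

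For uniqueness among bounded nonnegative solutions, take two solutions $u_1,u_2$. Then $w=u_1-u_2$ solves $\tfrac12 w'' = \tfrac12(u_1+u_2)w$. The argument is easiest on the set where $u_1+u_2 \ge c>0$, and a maximum-principle argument over large intervals excludes bounded nonzero solutions. Since $u>0$ everywhere (by the strong maximum principle, because $\psi\not\equiv0$), we expect this to work. Some care is needed because $u$ decays at infinity, so $u_1+u_2$ is not bounded below. Here we would instead use the ODE phase plane: outside $K$, bounded positive solutions of $u''=u^2$ on a half-line decay like $6/x^2$ and are determined by their value at $L$.

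\textbf{Main obstacle.} The main obstacle is this uniqueness statement, and more precisely the rigorous justification of the comparison principle on the unbounded domain $\R$ with only bounded, not decaying, solutions. We would handle both via the Feynman--Kac representation together with the ODE analysis outside $\supp\psi$.
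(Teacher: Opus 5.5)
Your Steps 1--3 are sound. The paper gives no proof of this statement; it quotes Theorem~3.3 of Iscoe (1986), so your argument is a self-contained replacement for existence, monotonicity and convergence.
\begin{itemize}
\item The Feynman--Kac comparison of $V_{t+h}$ with $V_t$ gives monotonicity in $t$.
\item The constant $\sqrt{2\|\psi\|_\infty}$ is a stationary supersolution, which gives a bound on $V_t$ uniform in $t$.
\item Equicontinuity follows from the Duhamel formula restarted at time $t-1$. Running Duhamel from time $0$ would only give a gradient bound of order $\sqrt t$.
\item Passing to the limit in the semigroup identity shows that $u=\lim_t V_t$ is a bounded, nonnegative, $C^2$ solution of $u''=u^2-2\psi$.
\end{itemize}

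\textbf{The gap.} The uniqueness assertion in the proposition is never proved; you stop at ``we expect this to work''. Your diagnosis of the difficulty is also misplaced. Decay of $u$ at infinity is not an obstacle to the maximum principle; it is exactly what makes it work. Knowing that half-line solutions are determined by their value at $\pm L$ only turns the problem into a nonlinear boundary value problem on $[-L,L]$, which still needs its own uniqueness argument.

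\textbf{How to close it.} The missing step is short.
\begin{itemize}
\item Let $u$ be a bounded solution of $u''=u^2-2\psi$. On $[L,\infty)$ it is convex and bounded, hence nonincreasing, hence convergent to some $\ell$. Since $u''\to\ell^2$, boundedness forces $\ell=0$; the same holds at $-\infty$.
\item Take two bounded nonnegative solutions $u_1,u_2$ and set $w=u_1-u_2$. Then $w''=(u_1+u_2)w$, and $w\to 0$ at $\pm\infty$.
\item If $\sup w>0$, it is attained at some $x^*$. There $u_1(x^*)>u_2(x^*)\ge 0$, so $w''(x^*)=(u_1+u_2)(x^*)\,w(x^*)>0$, which contradicts maximality. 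No global lower bound on $u_1+u_2$ is needed.
\item By symmetry $\inf w\ge 0$ as well, so $w\equiv 0$.
\end{itemize}
An alternative that avoids the decay analysis: by Kato's inequality, $w^+$ is convex and bounded on $\R$, hence constant. A positive constant would give $0=w''=(u_1+u_2)c$, forcing $u_1+u_2\equiv 0$, which is absurd.

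With this step added, your argument proves the proposition.
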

	
	\begin{lemma}\label{lem:elliptic}
		Let $r >0$, $m >0$, and 
		\begin{equation}
			\psi^{m,r}(x) =
			\begin{cases}
				0, & |x| \leq r, \\
				m(|x|/r - 1), & r \leq |x| \leq 2r, \\
				m, & 2r \leq |x|.
			\end{cases}
		\end{equation}
		Then there exists a unique mild nonnegative solution $u^m$ to 
		\begin{equation}\label{eq:elliptic_m}
			 \frac{1}{2}\Delta u^m(x) -\frac{1}{2}(u^m(x))^2 + \psi^{m,r}(x ) = 0, \quad x \in \R. 
		\end{equation}
		Furthermore, $u^m(x)$ is monotonically increasing in $m$ for all $x\in R$ and there exists a constant $\constFromEq{c}{eq:est_u_infty}>0$ such that for $|x| < r$ we have
		\begin{equation}\label{eq:est_u_infty}
			\lim_{m \rightarrow \infty} u^m(x) \leq \constFromEq{c}{eq:est_u_infty}(r-|x|)^{-2}.
		\end{equation}
    Furthermore, the monotone pointwise limit $u$ of $u^m$ is the unique, nonnegative solution to
    \begin{equation}\label{eq:boundary_trace_elliptic}
        \begin{cases}
            \Delta u(x) = \big( u(x) \big)^2, \quad x\in (-r,r), \\
            u(x) \rightarrow \infty, \text{ as } x\rightarrow r \text{ or } x\rightarrow -r.
        \end{cases}
    \end{equation}
	\end{lemma}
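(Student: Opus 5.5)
We need to prove Lemma (elliptic): existence/uniqueness of the nonnegative mild solution $u^m$ to \eqref{eq:elliptic_m}, monotonicity in $m$, the bound \eqref{eq:est_u_infty}, and that the limit $u$ solves the boundary blow-up problem \eqref{eq:boundary_trace_elliptic}.

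\textbf{Existence and uniqueness of $u^m$.} Since $\psi^{m,r}$ is not compactly supported, Proposition~\ref{prop:monotonicity_t} does not apply directly. I would truncate: take $\psi_n\in\mcC_c^+(\R)$ with $\psi_n\uparrow\psi^{m,r}$ (equal to $\psi^{m,r}$ on $[-n,n]$). By Proposition~\ref{prop:monotonicity_t} each $\psi_n$ yields a stationary solution $u_n$, obtained as the increasing $t\to\infty$ limit of $V_t(0,\psi_n)$. By the comparison principle for \eqref{eq:PDE_mu_psi}, $u_n$ is nondecreasing in $n$. For an a priori bound, the constant $\sqrt{2m}$ is a supersolution ($-\tfrac12\cdot 2m+\psi\le 0$), so $0\le V_t(0,\psi_n)\le\sqrt{2m}$ and hence $u_n\le \sqrt{2m}$. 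Let $u^m=\lim_n u_n$. Passing to the limit in the mild (Green's function with killing, or local distributional) formulation by monotone and dominated convergence gives a bounded nonnegative solution. For uniqueness among nonnegative bounded solutions, suppose $u,v$ are two such solutions. Then $w=u-v$ satisfies $\tfrac12 w''=\tfrac12(u+v)w$. Since $\psi^{m,r}=m$ outside $[-2r,2r]$, any bounded solution satisfies $u\ge$ a positive constant there; otherwise $u''=u^2-2m<0$ on a half line, contradicting boundedness. So $w''=cw$ with $c\ge c_0>0$ near infinity, and a maximum principle for bounded $w$ forces $w\equiv0$. Monotonicity in $m$ follows from the same comparison, since $\psi^{m,r}$ increases in $m$.

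\textbf{Bound \eqref{eq:est_u_infty}.} This is the Keller--Osserman type estimate. On $(-r,r)$ we have $\psi^{m,r}=0$, so $u^m$ solves $\Delta u=u^2$ there. For $|x_0|<r$ let $\rho=r-|x_0|$. I would compare with the explicit blow-up solution on $(x_0-\rho,x_0+\rho)$. The function $U_\rho(x)=6(\rho-|x-x_0|)^{-2}$ is a one-sided blow-up profile, because $6/y^2$ solves $u''=u^2$. For the two-sided interval, take $\Phi(x)=C\rho^{-2}\big(1-((x-x_0)/\rho)^2\big)^{-2}$. A direct computation shows $\Phi''\le\Phi^2$ for $C$ large, so $\Phi$ is a supersolution blowing up at the endpoints while $u^m$ is bounded. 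The comparison principle on $(x_0-\rho,x_0+\rho)$, applied to a maximum of $u^m-\Phi$ at an interior point, where $u^m{}''=(u^m)^2>\Phi^2\ge\Phi''$ gives a contradiction, yields $u^m(x_0)\le\Phi(x_0)=C\rho^{-2}$ uniformly in $m$. This proves \eqref{eq:est_u_infty}.

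\textbf{The limit $u$.} The limit $u=\lim u^m$ is finite on $(-r,r)$ by the bound above, and is locally bounded there. By interior elliptic estimates, or by the integral representation, $u$ solves $\Delta u=u^2$ on $(-r,r)$. For the blow-up at $\pm r$, I would use a lower bound: on $|x|\ge 2r$, a subsolution argument gives $u^m\ge c\sqrt m$ away from the transition region. Since $u^m$ is convex on $(-r,r)$ and $\Delta u^m\ge 0$ near $r$, the values $u^m(\pm r)\to\infty$ propagate, so $u$ is infinite at $\pm r$ and continuity gives $u(x)\to\infty$ as $x\to\pm r$. Uniqueness of the large solution to \eqref{eq:boundary_trace_elliptic} is classical (Iscoe \cite{Iscoe89}, Dawson--Iscoe--Perkins \cite{Dawson89}). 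A proof goes by scaling: if $u_1,u_2$ are two large solutions, then $u_2(\cdot)\le\lambda^{-2}u_1(\cdot/\lambda)$ for $\lambda>1$ by comparison on the shrunk interval, and letting $\lambda\downarrow1$ gives the result.

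\textbf{Main obstacle.} I expect the main obstacle to be the blow-up $u(x)\to\infty$ at $\pm r$, since $u^m(\pm r)\to\infty$ requires quantitative growth in $m$ on the transition layer $[r,2r]$. I would first try the subsolution $\min(\sqrt{m}\,\phi,\ \cdot)$, and otherwise invoke \cite[Theorem 3.3]{Iscoe86} together with \cite{Dawson89} directly.
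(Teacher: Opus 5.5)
Your existence argument (truncation plus the constant supersolution $\sqrt{2m}$), the monotonicity in $m$, and the Keller--Osserman barrier for \eqref{eq:est_u_infty} are sound. They are also more self-contained than the paper, which simply cites \cite[Lemma 3.4, Proposition 3.5]{Iscoe89} and \cite[Lemma 3.6]{Dawson89}. Your $\Phi$ works for $C\ge 24$, since $\Phi''=C\rho^{-4}(4+20y^2)(1-y^2)^{-4}$ while $\Phi^2=C^2\rho^{-4}(1-y^2)^{-4}$.

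The genuine gap is the blow-up of $u$ at $\pm r$, which you flag but do not prove. What is missing is $u^m(\pm r)\to\infty$, and your sketch does not give it:
on $r<|x|<2r$ one has $(u^m)''=(u^m)^2-2\psi^{m,r}$, which has no definite sign, so ``$\Delta u^m\ge 0$ near $r$'' fails on the outer side. Convexity on $(-r,r)$ only bounds $u^m$ from \emph{above} by its boundary values. And $u^m\ge c\sqrt m$ on $|x|\ge 2r$ does not by itself control $u^m(r)$, where $\psi^{m,r}$ degenerates to $0$. Moreover, the step from $u(\pm r)=\infty$ to $u(x)\to\infty$ is not ``continuity'', since $u$ is only a monotone limit. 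The right tool is lower semicontinuity of $u=\sup_m u^m$, which gives $\liminf_{x\to\pm r}u(x)\ge u(\pm r)$. Your fallback \cite[Theorem 3.3]{Iscoe86} is the large-time result (Proposition~\ref{prop:monotonicity_t}), not a blow-up statement; the paper takes the blow-up from \cite[Proposition 3.5]{Iscoe89}.

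One way to close the gap, in two steps.

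(a) For $\delta\in(0,r)$ one has $\psi^{m,r}\ge M:=m\delta/r$ on $[r+\delta,\infty)$, hence $(u^m)''\le (u^m)^2-2M$ there. Take the explicit solution $v(x)=a\bigl(1-3\operatorname{sech}^2(\sqrt{a/2}\,(x-c))\bigr)$ of $v''=v^2-a^2$, with $a=\sqrt{2M}$ and $c<r+\delta$ chosen so that $v(r+\delta)=0$. Comparing $u^m$ with $v$ by the same maximum-principle argument as in your uniqueness step gives $u^m(r+2\delta)\ge a\bigl(1-3\operatorname{sech}^2(\sqrt{a/2}\,\delta)\bigr)\to\infty$ as $m\to\infty$.

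(b) Suppose $u(r)=K<\infty$. By evenness (from uniqueness) and convexity, $0\le u^m\le K$ on $[-r,r]$ and $(u^m)'(r)=\int_0^r (u^m)^2\le rK^2$ for every $m$. Since $(u^m)''\le (u^m)^2$ on all of $\R$, comparison with the solution $w$ of $w''=w^2$, $w(r)=K+1$, $w'(r)=rK^2+1$ bounds $u^m$ on some $[r,r+\delta_0]$ uniformly in $m$. This contradicts (a) once $2\delta\le\delta_0$, so $u(\pm r)=\infty$, and lower semicontinuity finishes.

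Two smaller slips:
\begin{itemize}
\item In your scaling argument, $\lambda^{-2}u_1(\cdot/\lambda)$ with $\lambda>1$ lives on the \emph{enlarged} interval $(-\lambda r,\lambda r)$ and is finite at $\pm r$, so the comparison goes the wrong way. Use $\lambda^2u_1(\lambda\,\cdot)$ instead, which solves the same equation on $(-r/\lambda,r/\lambda)$ and blows up at its endpoints; then let $\lambda\downarrow1$.
\item You prove uniqueness of $u^m$ only among bounded solutions. This is harmless: $\Phi+\sqrt{2m}$ satisfies $(\Phi+\sqrt{2m})''\le(\Phi+\sqrt{2m})^2-2\psi^{m,r}$ because $0\le\psi^{m,r}\le m$. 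Letting $\rho\to\infty$ then shows every nonnegative solution on $\R$ is bounded by $\sqrt{2m}$.
\end{itemize}
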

	\begin{proof}
		Existence and uniqueness of the mild solution to \eqref{eq:elliptic_m} follow along the same lines as the proof of \cite[Lemma 3.4]{Iscoe89}. The estimate \eqref{eq:est_u_infty} follows from \cite[Lemma 3.6]{Dawson89}. Finally, it follows from \cite[Proposition 3.5]{Iscoe89} that $u$ satisfies \eqref{eq:boundary_trace_elliptic}.
	\end{proof}
	\begin{lemma}
		Let $r >0$, $m >0$, and 
		\begin{equation}
			\psi^{m,r}(x) =
			\begin{cases}
				0, & |x| \leq r, \\
				m(|x|/r - 1), & r \leq |x| \leq 2r, \\
				m, & 2r \leq |x|.
			\end{cases}
		\end{equation}
		Then there exists a unique mild solution $V(0,\psi^{m,r})=V^{m,r}$ to 
		\begin{equation}\label{eq:parabolic_m}
			\begin{cases}
				&\partial_t V^{m,r}_t(x) = \frac{1}{2}\Delta V^{m,r}_t(x) - \frac{1}{2}(V^{m,r}_t(x))^2 + \psi^{m,r}(x), \quad t >0, x\in \R,\\
				&V_0^{m,r}\equiv 0\\
				& V_t^{m,r}(x) \geq 0,  \quad t >0, x\in \R.
			\end{cases}
		\end{equation}
		Furthermore, for $(t,x) \in \R_+\times \R$, the sequence $(V_t^{m,r}(x))_{m> 0}$ is monotonically increasing in $m$.  We get the bounds
		\begin{equation}\label{eq:est_v_infty_1}
			\lim_{m \rightarrow \infty}V_t^{m,r}(x) \leq \constFromEq{c}{eq:est_v_infty_1} (r-|x|)^{-2}, \quad t > 0,  -r<x<r,
		\end{equation}
		and
		\begin{equation}\label{eq:ext_v_infty_2}
			\lim_{m \rightarrow \infty}V_t^{m,r}(x) \leq \constFromEq{c}{eq:ext_v_infty_2} (r-|x|) t^{-\frac{3}{2}} \exp\Big(-\frac{(r-|x|)^2}{2t}\Big), \quad t>0, r-|x|>2\sqrt{t},
		\end{equation} 
		for some constants $\constFromEq{c}{eq:est_v_infty_1}, \constFromEq{c}{eq:ext_v_infty_2}>0$.
	\end{lemma}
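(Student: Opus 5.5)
Existence, uniqueness and nonnegativity of the mild solution $V^{m,r}$ come from Proposition~\ref{prop:existence}, after a small extension since $\psi^{m,r}$ is bounded and continuous but not compactly supported. I would truncate $\psi^{m,r}$ by $\psi^{m,r}\phi_n$, with $\phi_n\in\mcC_c^+$ increasing to $1$. Proposition~\ref{prop:existence} gives solutions $V^{(n)}$, and a comparison principle for the mild equation makes them monotone in $n$; comparison applies because the nonlinearity $-\frac12 v^2$ is locally Lipschitz and the solutions are a priori bounded. For the a priori bound, the constant $\sqrt{2m}$ is a supersolution, since $-\frac12\cdot 2m + \psi^{m,r}\le 0$. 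Hence $0\le V^{(n)}\le \sqrt{2m}$, the monotone limit solves \eqref{eq:parabolic_m} in mild form by dominated convergence, and uniqueness follows from a Gronwall argument in the sup norm using the bound $\sqrt{2m}$. Monotonicity in $m$ follows the same way: $\psi^{m,r}\le\psi^{m',r}$ for $m\le m'$, so comparison for the mild equation with ordered forcing terms gives $V^{m,r}_t\le V^{m',r}_t$.

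For \eqref{eq:est_v_infty_1}, I would use Proposition~\ref{prop:monotonicity_t}, which extends to $\psi^{m,r}$ via the truncation above since monotonicity in $t$ and convergence both pass to monotone limits. It says that $V^{m,r}_t$ is nondecreasing in $t$ and converges to the elliptic solution $u^m$ of Lemma~\ref{lem:elliptic}. Therefore $V^{m,r}_t(x)\le u^m(x)$ for all $t$, and letting $m\to\infty$ and applying \eqref{eq:est_u_infty} gives the bound with $\constFromEq{c}{eq:est_v_infty_1}=\constFromEq{c}{eq:est_u_infty}$.

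The second bound \eqref{eq:ext_v_infty_2} is the main obstacle, because it needs the time-dependent decay of \cite{Dawson89}. The natural route is a comparison with the very singular solution. Fix $x$ with $d=r-|x|>2\sqrt t$. The limit $V^{\infty,r}=\lim_m V^{m,r}$ is the solution on $(-r,r)$ that blows up at $\pm r$ and starts from $0$. I would dominate it by the solution obtained when infinite mass is emitted from the boundary points, that is, roughly by superposing very singular solutions $\mathcal W^{y}_{s}$ with $|y|\ge r$, integrated over emission times $s\le t$. Using \eqref{eq:def_W} and \eqref{eq:est_f}, each $\mathcal W^{\pm r}_s(x)\approx C d\,s^{-3/2}e^{-d^2/(2s)}$. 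A cleaner implementation is to check directly that
\[
w_t(x)=K\,(r-|x|)\,t^{-3/2}e^{-(r-|x|)^2/(2t)}+\text{(symmetric term in }r+|x|\text{)}
\]
is a supersolution of the linear heat equation on the strip. Indeed, $(r-x)t^{-3/2}e^{-(r-x)^2/(2t)}$ is exactly the space derivative of the heat kernel, so it solves the heat equation, and it dominates $V^{\infty,r}$ in a subregion away from the boundary. Applying the parabolic maximum principle on the region $\{r-|x|>2\sqrt t\}$, and using \eqref{eq:est_v_infty_1} to control the parabolic boundary $r-|x|=2\sqrt t$ (where $(r-|x|)^{-2}\asymp t^{-1}$ while the kernel term is $\asymp t^{-1}e^{-2}$ after choosing $K$ large), then yields the claim.

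The delicate points are matching the two boundary pieces of that region, and verifying $V^{\infty,r}\to0$ at $t=0$ in the interior. The latter follows from $V^{m,r}_0=0$ together with monotone limits, using the uniform interior bound \eqref{eq:est_v_infty_1}. If this direct route becomes messy, I would instead cite \cite[Lemma 3.6 and its parabolic analogue]{Dawson89}, as in the proof of Lemma~\ref{lem:elliptic}.
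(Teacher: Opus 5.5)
Your existence, uniqueness and monotonicity arguments, and your proof of \eqref{eq:est_v_infty_1}, match the paper's: truncation and monotone limits, then Proposition~\ref{prop:monotonicity_t} together with Lemma~\ref{lem:elliptic}. You just supply more detail. For \eqref{eq:ext_v_infty_2} your route is genuinely different, and it is correct once one step is repaired (second paragraph).

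The paper follows \cite[Theorem 3.3]{Dawson89} on a cylinder. For $|x|<\tilde r<r$ it bounds $\lim_m V^{m,r}_t(x)\le u(\tilde r,r)\,\tilde{\bbP}_x(T_{\tilde r}<t)$, using the elliptic blow-up solution as lateral data at $\pm\tilde r$. It then combines $u(\tilde r,r)\le c(r-\tilde r)^{-2}$ with the Gaussian tail of the exit time. To recover the exact exponent $(r-|x|)^2/(2t)$, one must take $r-\tilde r$ of order $t/(r-|x|)$; a fixed fraction such as the midpoint loses a constant factor in the exponent.

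You instead work on the non-cylindrical region $\{r-|y|>2\sqrt s\}$. Your barrier is built from $h(s,y)=y\,s^{-3/2}e^{-y^2/(2s)}$, a multiple of $-\partial_y p_s(y)$, and you feed in \eqref{eq:est_v_infty_1} as lateral data. This gives the sharp Gaussian factor in one step, with an explicit constant, and it explains the threshold $r-|x|>2\sqrt t$. The cost is a maximum principle on a domain that pinches to the corners $(0,\pm r)$.

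Make two details explicit:
\begin{enumerate}
\item The symmetric term is indispensable. Since $\partial_y h(t,r)<0$ for $r>\sqrt t$, $h(t,r-|x|)$ alone has a convex kink at $x=0$ and is only a subsolution there. By contrast, $w=K[h(t,r-x)+h(t,r+x)]$ is smooth and solves the heat equation.
\item At the end, $h(t,r+|x|)\le h(t,r-|x|)$ because $y\mapsto ye^{-y^2/(2t)}$ is decreasing on $[\sqrt t,\infty)$. On the lateral boundary $w\ge 2Ke^{-2}s^{-1}$, so the final constant is $2K$ with $K\ge \constFromEq{c}{eq:est_v_infty_1}e^2/8$.
\end{enumerate}

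The step that is justified incorrectly is the initial condition for the limit. $V^{\infty,r}_t\to0$ as $t\downarrow0$ does not follow from $V^{m,r}_0=0$, monotone limits and \eqref{eq:est_v_infty_1}. That bound is uniform in $t$ and carries no small-time information. Moreover, $\lim_{t\downarrow0}\sup_m$ need not equal $\sup_m\lim_{t\downarrow0}$; compare $\min(1,mt)$.

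The remedy is to run the comparison for each fixed $m$, where everything needed holds:
\begin{enumerate}
\item $V^{m,r}$ is classical in the interior and satisfies $\partial_sV^{m,r}\le\frac12\Delta V^{m,r}$ on $\{|y|<r\}$, because $\psi^{m,r}$ vanishes there.
\item From the mild form, $0\le V^{m,r}_s\le ms$ uniformly in $y$. This gives continuity up to $s=0$ with zero data, and it also disposes of the corners since $w\ge0$.
\item On the lateral boundary, $V^{m,r}_s(y)\le V^{\infty,r}_s(y)\le \constFromEq{c}{eq:est_v_infty_1}(4s)^{-1}\le w_s(y)$.
\end{enumerate}
Hence $V^{m,r}\le w$ on the region, and letting $m\to\infty$ yields \eqref{eq:ext_v_infty_2}. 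This also spares you from having to show that $V^{\infty,r}$ itself is a classical subsolution.
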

	\begin{proof}
		Existence of $V^{m,r}$ for $m,r>0$ follows by approximating $\psi^{m,r}$ with compactly supported functions and using the monotone convergence theorem.
		 Similarly, an application of Proposition~\ref{prop:monotonicity_t} and Lemma~\ref{lem:elliptic} also yields \eqref{eq:est_v_infty_1}. It remains to verify \eqref{eq:ext_v_infty_2}. To this end, we introduce an additional probability space $(\tilde{\Omega}, \tilde{\mathcal{F}}, \tilde{\bbP})$ satisfying the usual conditions and supporting a standard Brownian motion $(B_t)_{t\geq 0}$.  
		Following arguments similar to those in \cite[Theorem 3.3]{Dawson89} we obtain
		\begin{equation*}
			\lim_{m \rightarrow \infty}V_t^{m,r}(x) \leq u(\tilde{r}, r) \tilde{\bbP}_x(T_{\tilde{r}}<t),
		\end{equation*} 
		for $|x|<\tilde{r} <r$, where $T_{\tilde{r}} := \inf\{t >0 \colon |B_t| > \tilde{r}\}$, under $\tilde{P}_x$ $B_0 = x$  and $u = \lim_{m\rightarrow \infty}u^m$  with $u^m$ denoting the solution to \eqref{eq:elliptic_m}. Again, as in \cite{Dawson89}, we show the existence of a constant $\constFromEq{c}{eq:ext_v_infty_2}>0$ such that 
		\begin{equation*}
			\lim_{m \rightarrow \infty}V_t^{m,r}(x) \leq \constFromEq{c}{eq:ext_v_infty_2} (r-|x|) t^{-\frac{3}{2}} \exp\Big(-\frac{(r-|x|)^2}{2t}\Big), \quad r-|x|>2\sqrt{t}.
		\end{equation*} 
	\end{proof}
	\section{A Sequence of Superprocesses }\label{sec:construction_seq}
	In this section, we establish the existence and uniqueness of solutions to \eqref{eq:SPDE_k}. \\ 
	To this end, let us consider the following system with more general initial conditions.  For the remainder of this section, let $X^0$ be the unique weak solution to 
	\begin{equation}\label{eq:SPDE_0_gen}
		d_tX_t^0(x) = \frac{1}{2}\Delta X_t^0(x)+ \sqrt{X_t^0(x)}\dot{W^0}(t,x), \quad X_0^0 = f^0, x\in \R, t\geq 0,
	\end{equation}
	where $\dot{W}^0$ is space-time white noise and $f^0 \in \mathcal{C}_{tem}^+$. Recall that the unique weak solution exists by \cite[Corollary III.4.3]{Perkins02}.
	Now, we recursively construct processes $X^k$ for $k\geq 1$ conditional on $\mathcal{F}^{k-1}_\infty$, which is the $\sigma$-algebra generated by $(X^0, \ldots X^{k-1})$, as the solutions to 
	\begin{equation}\label{eq:SPDE_k_gen}
		d_tX_t^k(x) = \frac{1}{2}\Delta X_t^k(x) +g^k(t,x)+ \sqrt{X_t^k(x)}\dot{W^k}(t,x), \quad X_0^k = f^k, x\in \R, t\geq 0,
	\end{equation}
	where $\{\dot{W}^k\}_{k\geq 0}$ are independent space-time white noises, $f^k \in \mcC_{tem}^+$  and $g_k$ is defined as in \eqref{eq:def_g_k}.
	For any $n\geq 1$, let us first prove weak existence of solutions to 
		\begin{equation} \label{eq:SPDE_n}
		\begin{cases}
			& d_tX_t^0(x) = \frac{1}{2}\Delta X_t^0(x)+ \sqrt{X_t^0(x)}\dot{W^0}(t,x), x\in \R, t\geq 0,\\
			&d_tX_t^k(x) = \frac{1}{2}\Delta X_t^k(x)+g^k(t,x)+ \sqrt{X_t^k(x)}\dot{W^k}(t,x),  x\in \R, t\geq 0, 1\leq k \leq n,
		\end{cases}
	\end{equation}
	with initial condition $(X^0_0, \ldots X^n_0)\in \big(\mcC_{tem}^+\big)^{n+1}$. 
	Hereby,  we use the following notion of solutions. 
	\begin{definition}
			We say that \eqref{eq:SPDE_n} has a weak solution $(X_t^0,\ldots, X_t^n )_{t\geq 0}$, if there exists a filtered probability space $(\Omega, \mathcal{F}, (\mathcal{F}_{t})_{t\geq 0}, \bbP)$ such that $(X_t^0,\ldots, X^n_t)_{t\geq 0}$ is an adapted $\big(\mathcal{C}_{tem}^+\big)^{n+1}$-valued 
		continuous process with $X_0^k =f^k $ for $0\leq k \leq n$, and such that $(\dot{W}^k)_{0\leq k \leq n}$  is a family of independent  space-time white noises and for every $(t,x) \in (0,\infty)\times \R$ and $0 \leq k \leq n$ almost surely
		\begin{equation*}
			\begin{split}
				&X_t^k(x) = \int_\R p_t(x-y) X_0^k(y)dy + \int_\R \int_0^t p_{t-s}(x-y)g^k_s(y)dsdy \\
				& \hspace{2cm}+ \int_\R \int_0^t p_{t-s}(x-y) \sqrt{X_s^k(y)}W^k(dsdy),\quad \bbP-a.s.,
			\end{split}
		\end{equation*}
		with $g^0 \equiv 0$.
	\end{definition}
	\begin{proposition}
		For any $n\geq 0$ and  $(X_0^0, \ldots X^n_0)\in \big(\mathcal{C}_{tem}^+\big)^{n+1}$ there exists a weak solution to \eqref{eq:SPDE_n} in $(\mcC(\R_+, \mcC_{tem}^+))^{n+1}$ such that, conditionally on $\mathcal{F}_\infty^{k-1}$,  $X^k$ is a solution to \eqref{eq:SPDE_k_gen} for $k \in \{1, \ldots n\}$.
	\end{proposition}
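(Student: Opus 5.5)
I will construct the system layer by layer by induction on $n$, with each new layer built on a product extension of the space carrying the previous ones. The case $n=0$ is the existence of super-Brownian motion started from $f^0\in\mcC_{tem}^+$, given by \cite[Corollary III.4.3]{Perkins02}. Suppose a weak solution $(X^0,\ldots,X^{n-1})$ with independent noises $W^0,\ldots,W^{n-1}$ has been built on $(\Omega',\mathcal F',(\mathcal F'_t),\bbP')$. Then $g^n$ is a jointly measurable, $(\mathcal F'_t)$-progressive field. Indeed, $\{(t,x,\omega)\colon X^{n-1}_t(x)>0\}$ is measurable because $X^{n-1}$ is continuous in $(t,x)$, and the same holds for the sets defining $g^1,\ldots,g^{n-1}$ by induction. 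Moreover $0\le g^n\le b_1$.

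The key step is solving the immigration equation for a \emph{deterministic} bounded measurable immigration rate $g$. Concretely, for each fixed $g\colon\R_+\times\R\to[0,b_1]$ I need a weak solution $Y^g$ of
\[
d_tY_t=\tfrac12\Delta Y_t+g(t,\cdot)+\sqrt{Y_t}\,\dot W,\qquad Y_0=f^n,
\]
in $\mcC(\R_+,\mcC_{tem}^+)$ whose law $Q^g$ is unique and depends measurably on $g$.

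Existence and uniqueness in law for fixed $g$ are classical for superprocesses with immigration. I would use the log-Laplace characterization
\[
\E\big[e^{-\la Y_t,\phi\ra}\big]=\exp\Big(-\la f^n,V_t\phi\ra-\int_0^t\la g(t-s,\cdot),V_s\phi\ra\,ds\Big),
\]
with $V$ the solution of \eqref{eq:PDE_mu_psi} for $\psi=0$ and $\mu=\phi$. This identity gives uniqueness of the finite-dimensional distributions through the martingale problem and duality. Existence follows by smoothing $g$ and using tightness in $\mcC(\R_+,\mcC_{tem}^+)$; the tightness comes from the standard moment bounds on the stochastic convolution, as in \cite{Perkins02} or Shiga's results, since the drift term is bounded and hence harmless. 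The Laplace functional is measurable in $g$, so $g\mapsto Q^g$ is a measurable kernel from the space of rates, with the $\sigma$-algebra generated by the maps $g\mapsto\int g h$, into $\mcC(\R_+,\mcC_{tem}^+)$.

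Next I set $\Omega=\Omega'\times\mcC(\R_+,\mcC_{tem}^+)$ and $\bbP(d\omega',dy)=\bbP'(d\omega')\,Q^{g^n(\omega')}(dy)$, and take $X^n$ to be the coordinate $y$. By construction, conditionally on $\mathcal F'_\infty$, which corresponds to $\mathcal F^{n-1}_\infty$, $X^n$ solves \eqref{eq:SPDE_k_gen}.

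It remains to produce the noise $W^n$ and to check the joint filtration. I would work with the filtration generated by $\mathcal F'_\infty$ at time $0$ together with the coordinate process up to time $t$; this is an initial enlargement. $W^0,\ldots,W^{n-1}$ are $\mathcal F'_\infty$-measurable, so they are no longer white noises with respect to it. Hence I use instead the filtration $\mathcal G_t=\mathcal F'_t\vee\sigma(X^n_s\colon s\le t)$. The main point is that, under $\bbP$, the process
\[
M_t(\phi)=\la X^n_t,\phi\ra-\la f^n,\phi\ra-\int_0^t\la X^n_s,\tfrac12\phi''\ra+\la g^n_s,\phi\ra\,ds
\]
is a $(\mathcal G_t)$-martingale with quadratic variation $\int_0^t\la X^n_s,\phi^2\ra ds$. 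In addition, it must be orthogonal to the martingales driven by $W^0,\ldots,W^{n-1}$.

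This is the step I expect to be the main obstacle. Conditional independence of $X^n$ and the future of $W^{<n}$ given the past is not automatic, because $g^n$ at time $s$ depends only on $\mathcal F'_s$. What needs to be shown is that $Q^{g}$ restricted to $[0,t]$ depends only on $g|_{[0,t]}$. This holds by the Markov/log-Laplace structure, and it yields the martingale property with respect to $\mathcal G_t$ via a monotone class argument. Orthogonality to $W^{<n}$ follows because $\la M(\phi),\int h\,dW^i\ra$ vanishes: under the conditional law, the increments of $M$ are conditionally centered given all of $\mathcal F'_\infty$.

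With the martingale problem in hand, the standard representation theorem (\cite[Theorem III.4.2]{Perkins02}, or Konno--Shiga), applied on a further enlargement, yields a white noise $W^n$ independent of $W^0,\ldots,W^{n-1}$. This noise satisfies the mild equation of the definition, and the verification of the mild form from the martingale problem is routine. This closes the induction.
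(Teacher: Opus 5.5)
Your proof is correct, but the inductive step takes a different route from the paper's. The paper settles it in one sentence. Because $g^{k+1}$ is $(\mathcal F^k_t)$-adapted, nonnegative and bounded, and $\dot W^{k+1}$ is independent of $\mathcal F^k_\infty$, it runs Shiga's approximation-and-tightness existence argument \cite{Shiga94} directly with $g^{k+1}$ as a random adapted drift. There the new noise is given from the outset, and the filtration questions are left to the adaptedness of the approximations. You instead work at the level of laws: the quenched law $Q^g$ for deterministic immigration, measurability of $g\mapsto Q^g$, the mixture $\bbP'(d\omega')\,Q^{g^n(\omega')}(dy)$, and recovery of $W^n$ from the martingale problem via the representation theorem. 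This costs more. You need quenched uniqueness up front so that $Q^g$ is a well-defined kernel (the paper only proves it in the next proposition), plus the kernel-measurability and representation steps. In exchange you make explicit two things the paper's sketch leaves implicit. First, the conditional statement given $\mathcal F^{n-1}_\infty$ holds by construction. Second, the compatibility property ($Q^g$ on $[0,t]$ depends only on $g|_{[0,t]}$) is what keeps $W^0,\ldots,W^{n-1}$ white noises in $\mathcal G_t=\mathcal F'_t\vee\sigma(X^n_s\colon s\le t)$. That is in fact the only place this property is needed. The martingale property of $M(\phi)$, its quadratic variation, and the conditional centering of its increments that gives orthogonality to $\int h\,dW^i$ all hold already in the initially enlarged filtration $\mathcal F'_\infty\vee\sigma(X^n_s\colon s\le t)$, and pass to $(\mathcal G_t)$ by the tower property. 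Two minor precisions. For time-dependent $g$, the single-time Laplace identity you display must be used in its time-shifted (Markov) or multi-time form to determine the finite-dimensional distributions. And if you build $W^n$ as a stochastic integral against $M$ in the initially enlarged filtration, you get $W^n$ independent of all of $\mathcal F'_\infty$, not just of $W^0,\ldots,W^{n-1}$.
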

	\begin{proof}
		Equation \eqref{eq:SPDE_0_gen} is well-known to have a weak solution in $\mcC(\R_+, \mcC_{tem}^+)$ (see e.g. \cite{Shiga94}). Provided that we have already constructed a solution to \eqref{eq:SPDE_k_gen} for $k \geq 1$, the existence of solutions \eqref{eq:SPDE_k_gen} for $k+1$ in $\mcC(\R_+ , \mcC_{tem}^+)$ conditional on $\mathcal{F}_\infty^{k}$ follows from a similar argument as in \cite{Shiga94}, since $g^{k+1}$ is adapted to the filtration $(\mathcal{F}^k_t)_{t\geq 0}$ and is nonnegative and bounded and $\dot{W}^{k+1}$ is independent of $\mathcal{F}^k_\infty$.
	\end{proof}
	\begin{lemma}\label{lem:duality}
		Let $n\geq 1$ and let $(X^0, \ldots X^n)$ be a solution to \eqref{eq:SPDE_n} such that conditionally on $\mathcal{F}_\infty^{k-1}$,  $X^k$ is a solution to \eqref{eq:SPDE_k_gen} for $1\leq k \leq n$. Then, for any $1\leq k \leq n$ and any  $\psi^k, \phi^k\in \mcC_c^+(\R)$, it holds that 
		\begin{equation}\label{eq:duality_relation}
		\begin{split}
		&	\E\Big[\exp\Big(- \langle X^k_t, \phi^k \rangle - \int_0^t \langle X^k_s, \psi^k\rangle ds\Big)\Big| \mathcal{F}^{k-1}_\infty\Big] \\
		& \quad = \exp\Big(- \langle X_0^k, V_t(\phi^k, \psi^k)\rangle -\int_0^t\langle g_s^k, V_{t-s}(\phi^k, \psi^k) \rangle ds\Big), 
		\end{split}
		\end{equation} 
		where $V(\phi^k, \psi^k)$ is the solution to 
		\begin{equation*}
			\begin{split}
				&\partial_t V_t(x) = \frac{1}{2}\Delta V_t(x) - \frac{1}{2}(V_t(x))^2 + \psi^k(x), \quad t >0, x\in \R\\
				&V_0 \equiv \phi^k.
			\end{split}
		\end{equation*}
	\end{lemma}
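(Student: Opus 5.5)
Conditionally on $\mathcal{F}^{k-1}_\infty$, the immigration rate $g^k$ is a fixed, bounded, nonnegative, deterministic space-time function, and $\dot W^k$ is a white noise independent of it. So the plan is to prove the standard Laplace functional identity for a super-Brownian motion with deterministic immigration, and then to lift it to the conditional statement. Fix $k$ and $t>0$, and write $V_s:=V_s(\phi^k,\psi^k)$. By Proposition~\ref{prop:existence}, $V$ is a nonnegative mild solution. I would first check that $V$ is smooth enough to be used as a test function in the weak (martingale) formulation: $V_s\in \mcC_b^2$, with $\partial_s V$ bounded on $[0,t]\times\R$. This follows from the mild representation and parabolic smoothing, possibly after first mollifying $\phi^k,\psi^k$ and passing to the limit at the end.

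Next I would consider the process
\[
M_s=\exp\Big(-\langle X^k_s,V_{t-s}\rangle-\int_0^s\langle X^k_r,\psi^k\rangle dr-\int_s^t\langle g^k_r,V_{t-r}\rangle dr\Big),\qquad s\in[0,t].
\]
From the mild formulation of \eqref{eq:SPDE_k_gen} one obtains, for time-dependent test functions $\varphi\in\mcC^{1,2}_b$, the semimartingale decomposition
\[
d\langle X^k_s,\varphi_s\rangle=\langle X^k_s,\partial_s\varphi_s+\tfrac12\Delta\varphi_s\rangle ds+\langle g^k_s,\varphi_s\rangle ds+dN_s,
\qquad
d\langle N\rangle_s=\langle X^k_s,\varphi_s^2\rangle ds.
\]
This identity holds with respect to the filtration generated by $\mathcal{F}^{k-1}_\infty$ and $W^k$, because $W^k$ is independent of $\mathcal{F}^{k-1}_\infty$. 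Applying It\^o's formula with $\varphi_s=V_{t-s}$ and using the PDE for $V$, the drift terms cancel: the $-\tfrac12 V^2$ term cancels the It\^o correction $\tfrac12\langle X^k_s,V_{t-s}^2\rangle$, the $\psi^k$ term cancels the time integral, and the immigration term cancels the derivative of $\int_s^t\langle g^k_r,V_{t-r}\rangle dr$. Hence $M$ is a local martingale. It takes values in $[0,1]$, since all exponents are nonnegative, so it is a bounded true martingale with respect to the enlarged filtration $\mathcal{G}_s=\mathcal{F}^{k-1}_\infty\vee\sigma(W^k_r, X^k_r: r\le s)$.

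Equating $\E[M_t\mid\mathcal{G}_0]=M_0$ gives exactly \eqref{eq:duality_relation}. Indeed, $M_0$ is $\mathcal{F}^{k-1}_\infty$-measurable, which follows once $X_0^k=f^k$ is deterministic, and $M_t$ is the left-hand integrand. Since $\mathcal{G}_0=\mathcal{F}^{k-1}_\infty$ up to null sets, the conditional expectation is the one in the statement.

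I expect the main obstacle to be justifying the martingale property relative to $\mathcal{G}$. Concretely, one needs the stochastic integral against $W^k$ to be a martingale when $\mathcal{F}^{k-1}_\infty$, which contains the future of $X^0,\dots,X^{k-1}$, is adjoined at time $0$. This is where the independence of $\dot W^k$ from $\mathcal{F}^{k-1}_\infty$, built into the construction, is used: $W^k$ remains a white noise with respect to $\mathcal{G}$. A secondary technical point is passing from the mild to the weak form with the test function $V_{t-s}$ when $X^k\in\mcC_{tem}^+$ is not integrable. Because $\phi^k,\psi^k$ are compactly supported, $V$ decays like a Gaussian tail; one can therefore localize with stopping times and use dominated convergence, with the bound $V_s\le S_s\phi^k+s\|\psi^k\|_\infty$ providing the domination.
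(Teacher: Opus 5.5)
Your proposal is correct and is essentially the paper's argument. The paper's proof is one line: the identity follows as in Perkins' proof of the Laplace functional by It\^o's formula, using that $W^k$ is independent of the earlier layers; your exponential martingale $M_s$, the drift cancellation, and the enlarged filtration $\mathcal{G}$ are exactly the details that line leaves implicit.

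Two small corrections. The domination you want is $V_s\le S_s\phi^k+\int_0^s S_r\psi^k\,dr$, which has Gaussian tails, not $S_s\phi^k+s\|\psi^k\|_\infty$, which need not integrate against a tempered $X^k$. Also, what makes $W^k$ a white noise for $\mathcal{G}$ is really the hypothesis that $X^k$ solves \eqref{eq:SPDE_k_gen} conditionally on $\mathcal{F}^{k-1}_\infty$, not bare independence of $W^k$ from $\mathcal{F}^{k-1}_\infty$.
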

	\begin{proof}
		Since $W^k$ is independent of $\mathcal{F}^{k-1}_t$ this follows along similar lines as the proof of \cite{Perkins02} by applying It\^o's formula. 
	\end{proof}
	\begin{proposition}
		For each $n\geq 0$ and any initial condition $(X^0_0, \ldots X^n_0) \in \big(\mathcal{C}_{tem}^+\big)^{n+1}$, the solution to \eqref{eq:SPDE_n} such that conditionally on $\mathcal{F}_\infty^{k-1}$, $X^k$ solves \eqref{eq:SPDE_k_gen} for $1\leq k \leq n$, is weakly unique in $\mcC(\R_+, \mcC_{tem}^+)^{n+1}$.
		Moreover, conditionally on $\mathcal{F}^k_\infty$ the law of $X^{k+1}$ is the law of super-Brownian motion with immigration $g^{k+1}$ for $k \in \{0, \ldots, n-1 \}$.
	\end{proposition}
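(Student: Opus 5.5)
We argue by induction on $n$, using the conditional duality of Lemma~\ref{lem:duality}. For $n=0$ the claim is weak uniqueness of super-Brownian motion started at $f^0\in\mcC_{tem}^+$, which is classical \cite[Corollary III.4.3]{Perkins02}. There the Laplace functional $\E[\exp(-\langle X^0_t,\phi\rangle)]=\exp(-\langle f^0,V_t(\phi,0)\rangle)$ determines the one-dimensional marginals, and the Markov property gives the finite-dimensional distributions. Now assume the joint law of $(X^0,\ldots,X^{n-1})$ on $\mcC(\R_+,\mcC_{tem}^+)^{n}$ is uniquely determined. Since $g^n$ is a measurable functional of $(X^0,\ldots,X^{n-1})$ by \eqref{eq:def_g_k}, the law of $g^n$ is then also determined. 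It therefore suffices to show that the regular conditional law of $X^n$ given $\mathcal{F}^{n-1}_\infty$ is a fixed measurable function of $g^n$ alone, namely the law of super-Brownian motion with immigration $g^n$. The joint law is then the law of $(X^0,\ldots,X^{n-1})$ composed with this kernel, hence unique. This also gives the second assertion.

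To identify the conditional law, we fix times $0<t_1<\dots<t_m$ and test functions $\phi_1,\ldots,\phi_m\in\mcC_c^+(\R)$, and compute
\[
\E\Bigl[\exp\Bigl(-\sum_{j=1}^m\langle X^n_{t_j},\phi_j\rangle\Bigr)\Bigm|\mathcal{F}^{n-1}_\infty\Bigr]
\]
by backward iteration. We condition on $\mathcal{F}^{n-1}_\infty\vee\sigma(X^n_s,s\le t_{m-1})$ and apply \eqref{eq:duality_relation} on $[t_{m-1},t_m]$ with $\psi=0$, shifting time. This uses that $\dot W^n$ restricted to $(t_{m-1},\infty)$ is independent of this $\sigma$-algebra, and that $g^n$ is $\mathcal{F}^{n-1}_\infty$-measurable. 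The result is $\exp(-\langle X^n_{t_{m-1}},V_{t_m-t_{m-1}}(\phi_m)\rangle-\int_{t_{m-1}}^{t_m}\langle g^n_s,V_{t_m-s}(\phi_m)\rangle ds)$. The second factor is $\mathcal{F}^{n-1}_\infty$-measurable, and the first combines with $\phi_{m-1}$. Iterating yields an explicit expression depending only on $X^n_0=f^n$ and $g^n$ through nested solutions of \eqref{eq:PDE_mu_psi}. These expressions determine the conditional finite-dimensional distributions of $X^n$, and by path continuity its conditional law on $\mcC(\R_+,\mcC^+_{tem})$.

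The main obstacle is justifying the conditional duality at intermediate times. Lemma~\ref{lem:duality} is stated from time $0$ with a deterministic initial state $X^k_0$, whereas here we need it from time $t_{j}$ with the random, merely tempered initial condition $X^n_{t_j}$ and shifted immigration $g^n_{t_j+\cdot}$. We plan to obtain this from the same It\^o-formula argument. We apply It\^o's formula to $s\mapsto\exp(-\langle X^n_s,V_{t-s}\rangle-\int_{t_j}^s\langle g^n_r,V_{t-r}\rangle dr)$ on $[t_j,t]$; the drift cancels because $V$ solves \eqref{eq:PDE_mu_psi}, leaving a martingale conditionally on the enlarged $\sigma$-algebra. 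Integrability of the stochastic integral against $W^n$ follows from $V\le S\phi$ (Lemma~\ref{lem:est_pde}), which has Gaussian decay, together with $\sup_{s\le t}\langle X^n_s,e^{-\lambda|x|}\rangle<\infty$ for $X^n\in\mcC(\R_+,\mcC_{tem}^+)$, using a localization by stopping times if needed. Finally, measurability of the conditional kernel in $g^n$ follows since the right-hand sides are measurable functionals of $g^n$. A monotone class argument then passes from exponential test functionals to the full conditional law.
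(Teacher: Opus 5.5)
Your proposal is correct and follows the same route as the paper: induction on $n$, with the conditional law of $X^n$ given $\mathcal{F}^{n-1}_\infty$ identified through the duality of Lemma~\ref{lem:duality} as that of super-Brownian motion with immigration $g^n$; your backward iteration over finite-dimensional distributions and the It\^o-formula justification at intermediate times are exactly the ``standard martingale problem arguments'' the paper invokes without detail. One small slip: the conditional martingale on $[t_j,t]$ is $\exp\bigl(-\langle X^n_s,V_{t-s}\rangle-\int_s^t\langle g^n_r,V_{t-r}\rangle\,dr\bigr)$ (i.e.\ the immigration integral enters with a $+$ sign when written as $\int_{t_j}^s$), since with your sign the drift is $-2\langle g^n_s,V_{t-s}\rangle$ times the process instead of zero.
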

	\begin{proof}
		We prove uniqueness in law via induction.
		First, note that weak uniqueness holds for  $n= 0$, since super-Brownian motion is weakly unique. Let $n \geq 1$ be arbitrary and assume that the claim is true for $n-1$ . 
		Let $(X^0, \ldots X^n)$ be any solution to \eqref{eq:SPDE_n} such that conditionally on $\mathcal{F}_\infty^{k-1}$, $X^k$ solves \eqref{eq:SPDE_k_gen} for $1\leq k \leq n$. Then, by induction, $(X^0,\ldots X^{n-1})$ has a unique law. By Lemma~\ref{lem:duality} and standard martingale problem arguments, we get that the law of $X^n$ conditioned on $\mathcal{F}_{\infty}^{n-1}$ is the unique law of super-Brownian motion with immigration $g^n$. This gives us the uniqueness in law of $(X^0,\ldots X^n)$ and the claim follows.
	\end{proof}

	\section{Cozero Set Estimates}\label{sec:cozero}
	Fix arbitrary $n\geq 1$ and let $(X^0,\ldots X^n)$ be the unique weak solution to \eqref{eq:SPDE_n} such that, conditionally on $\mathcal{F}_\infty^{k-1}$ the law of $X^k$ is the law of super-Brownian motion with immigration $g^k$ with $X_0^k \equiv 0$ for $k \in \{1,\ldots ,n\}$ and $X_0^0 = f\in \mcC_c^+(\R)$.
	An important step in our analysis is to obtain estimates for the Lebesgue measure of the cozero set of $X^k$ for $k\geq 1$.
	Namely, we find estimates for 
	\begin{equation}\label{eq:def_Jnt}
		J_t^n = \E\Big[\int_0^t\Leb(\{x \colon X_s^n(x) >0\}) ds\Big], \quad  t\geq 0.
	\end{equation}
	To this end, we employ the duality relation  to get 
	\begin{equation*}
		\begin{split}
			&\E_f\Big[\int_0^t\Leb(\{x \colon X_s^n(x) >0\}) ds\Big] = \int_0^t \int_\R\E_f\big[\mathbbm{1}_{X^n_s(x)>0} \big]dxds\\
			& = \lim_{m \rightarrow \infty} \int_0^t \int_\R\E_f\big[1-\exp(-m X^n_s(x)) \big]dxds\\
			& =  \int_0^t \int_\R\E_f\big[1-\exp(-\int_0^s\langle g^n_{r}, \mcW_{s-r}^x \rangle dr)\big]dxds\\
			& \leq \E_f\Big[ \int_0^t \int_\R\int_0^s\langle g^n_{r}, \mcW_{s-r}^x \rangle drdxds\Big],\quad t \geq 0, 
		\end{split}
	\end{equation*}
	where $\mathcal{W}^x$ is the function given by \eqref{eq:def_W}. 
	Furthermore, note that 
	\begin{equation}\label{eq:Jt0_est}
		 J_t^0 \leq 2\E_f[R^0_t]t, \quad t\geq 0,
	\end{equation}
	where
	\begin{equation*}
		R^0_t  = \sup\{|x| \colon \text{ there exists }  s\in [0,t] \text{ such that } X^0_s(x) >0 \}, \quad t \geq 0,
	\end{equation*}
    and 
    \begin{equation*}
        \E_f[R_t^0] <\infty, 
    \end{equation*}
    by \cite[Theorem 1]{Iscoe89}.
	\begin{lemma}
		For all $y\in \R$, 
		\begin{equation}\label{eq:est_W_time_int}
			\int_0^t \int_\R \mcW^x_s(y)dxds \leq \constFromEq{c}{eq:est_W_time_int}\sqrt{t}, \quad \forall t >0.
		\end{equation}
	\end{lemma}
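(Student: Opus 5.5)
The bound follows directly from \eqref{eq:est_W_int} in Proposition~\ref{prp:very_sing_asymp} once the role of the variables is handled. By \eqref{eq:def_W}, $\mcW^x_s(y) = s^{-1}\xi(s^{-1/2}|y-x|)$ depends on $x$ and $y$ only through $|x-y|$. Hence for fixed $y$ the substitution $x\mapsto 2y-x$ (or simply the symmetry $\mcW^x_s(y)=\mcW^y_s(x)$) gives
\begin{equation*}
\int_\R \mcW^x_s(y)\,dx = \int_\R \mcW^y_s(x)\,dx .
\end{equation*}
By \eqref{eq:est_W_int} the right-hand side is at most $\constFromEq{c}{eq:est_W_int}s^{-1/2}$, uniformly in $y$.

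Next I integrate in time. The integrand is nonnegative, so Tonelli's theorem justifies the order of integration. This gives
\begin{equation*}
\int_0^t\int_\R \mcW^x_s(y)\,dx\,ds \le \constFromEq{c}{eq:est_W_int}\int_0^t s^{-1/2}\,ds = 2\constFromEq{c}{eq:est_W_int}\sqrt t ,
\end{equation*}
so the claim holds with $\constFromEq{c}{eq:est_W_time_int}=2\constFromEq{c}{eq:est_W_int}$.

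There is no real obstacle here. The only point to check is the symmetry in $(x,y)$, which the self-similar form \eqref{eq:def_W} provides. If one wanted to avoid relying on \eqref{eq:est_W_int}, one could compute directly: $\int_\R s^{-1}\xi(s^{-1/2}|z|)\,dz = s^{-1/2}\int_\R\xi(|u|)\,du$. The last integral is finite because $\xi$ is smooth, hence bounded near $0$, and by \eqref{eq:est_f} it decays like $u\,\ee^{-u^2/2}$ at infinity. This gives the same bound.
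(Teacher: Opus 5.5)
Your proof is correct and matches the paper's argument: the paper likewise uses the form \eqref{eq:def_W} to swap $x$ and $y$, applies \eqref{eq:est_W_int}, and integrates $s^{-1/2}$ in time. You also make the constant explicit as $2\constFromEq{c}{eq:est_W_int}$ and check directly, via the decay of $\xi$, that the spatial integral is finite.
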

	\begin{proof}
		By Proposition~\ref{prp:very_sing_asymp} it follows that 
		\begin{equation*}
			\mcW^x_t(y) = \frac{1}{t}\xi\Big(\frac{|x-y|}{t^{1/2}}\Big), \quad t >0, 
		\end{equation*} 
		where $\xi$ is defined as in \eqref{eq:def_W}. Thus, using \eqref{eq:est_W_int} yields
		\begin{equation*}
			\int_0^t \int_\R \mcW^x_s(y)dxds = 	\int_0^t \int_\R \mcW^y_s(x)dxds\leq  \constFromEq{c}{eq:est_W_time_int}\sqrt{t}, \quad t >0,
		\end{equation*}
		for some constant $\constFromEq{c}{eq:est_W_time_int}>0$.
	\end{proof}
	Using the above lemma, we can derive the following estimate.
	\begin{lemma}
		Let $n\ge 1$. Then it follows that 
		\begin{equation*}
			\E_f\Big[ \int_0^t \int_\R\int_0^s\langle g^n_{r}, \mcW_{s-r}^x \rangle drdxds\Big] \leq \frac{\constFromEq{c}{eq:est_W_time_int}}{2}b_1\int_0^t \frac{1}{\sqrt{t-s}} J^{n-1}_s ds,
		\end{equation*}
	for all $t>0$.
	\end{lemma}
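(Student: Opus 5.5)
The plan is a direct Fubini–Tonelli computation: everything is nonnegative, so integrals and expectation may be interchanged freely. There are three ingredients: the pointwise bound $g^n\le b_1\mathbbm{1}_{X^{n-1}>0}$, the spatial integral bound \eqref{eq:est_W_int} for $\mcW$, and a rearrangement of the time integrals.

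\emph{Step 1: bound the immigration rate.} From \eqref{eq:def_g_k} we have $g^n_r(y)\le b_1\mathbbm{1}_{X^{n-1}_r(y)>0}$ for every $n\ge 1$. For $n=1$ this is an equality. The indicator is jointly measurable in $(\omega,r,y)$ because $X^{n-1}$ is a continuous $\mcC_{tem}^+$-valued process.

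\emph{Step 2: integrate out $x$.} By \eqref{eq:def_W}, $\mcW^x_{u}(y)=u^{-1}\xi(u^{-1/2}|x-y|)$ is symmetric in $x$ and $y$. Hence \eqref{eq:est_W_int} gives
\begin{equation*}
\int_\R \mcW^x_{s-r}(y)\,dx=\int_\R\mcW^y_{s-r}(x)\,dx\le \constFromEq{c}{eq:est_W_int}(s-r)^{-1/2}.
\end{equation*}
The constant $\constFromEq{c}{eq:est_W_time_int}$ may be taken equal to $2\constFromEq{c}{eq:est_W_int}$, since the preceding lemma follows from $\int_0^t \constFromEq{c}{eq:est_W_int}s^{-1/2}ds=2\constFromEq{c}{eq:est_W_int}\sqrt t$. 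So the bound above is $\frac{\constFromEq{c}{eq:est_W_time_int}}{2}(s-r)^{-1/2}$.

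Write $L(r)=\Leb(\{y\colon X^{n-1}_r(y)>0\})$. By Tonelli, applied pathwise, the quantity inside the expectation is then at most
\begin{equation*}
\frac{\constFromEq{c}{eq:est_W_time_int}}{2}b_1\int_0^t\int_0^s (s-r)^{-1/2}L(r)\,dr\,ds .
\end{equation*}

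\emph{Step 3: rearrange the time integrals.} This is the only slightly delicate point: the inner kernel $(s-r)^{-1/2}$ must be converted into the kernel $(t-s)^{-1/2}$ acting on cumulative integrals of $L$. Swapping the order of integration gives
\begin{equation*}
\int_0^t\!\int_0^s (s-r)^{-1/2}L(r)\,dr\,ds=\int_0^t 2\sqrt{t-r}\,L(r)\,dr .
\end{equation*}
Using $2\sqrt{t-r}=\int_r^t (t-s)^{-1/2}ds$ and swapping once more, this equals
\begin{equation*}
\int_0^t (t-s)^{-1/2}\int_0^s L(r)\,dr\,ds .
\end{equation*}
Taking expectations and using Tonelli again, $\E_f\big[\int_0^s L(r)\,dr\big]=J^{n-1}_s$ by \eqref{eq:def_Jnt}, which yields exactly the claimed bound. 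The right-hand side may be infinite a priori, but the inequality holds in $[0,\infty]$, so no integrability issue arises.
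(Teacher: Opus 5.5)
Your proof is correct and follows the paper's argument. The ingredients are the same: Tonelli, the bound $g^n\le b_1\mathbbm{1}_{X^{n-1}>0}$, the mass estimate for $\mcW$, and a rearrangement of the time kernel. The only cosmetic difference is the order of the steps. The paper integrates out $x$ and $s$ together via \eqref{eq:est_W_time_int} to get $\constFromEq{c}{eq:est_W_time_int}\sqrt{t-r}$ and then integrates by parts, whereas you integrate out $x$ via \eqref{eq:est_W_int} and swap the time integrals twice. Your identification $\constFromEq{c}{eq:est_W_time_int}=2\constFromEq{c}{eq:est_W_int}$ is harmless: by scaling, $\int_\R\mcW^y_u(x)\,dx$ is exactly a constant times $u^{-1/2}$.
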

	\begin{proof}
		Using the definition of $g^n$ yields
		\begin{equation*}
			\begin{split}
				&\E_f\Big[ \int_0^t \int_\R\int_0^s\langle g^n_{r}, \mcW_{s-r}^x \rangle drdxds\Big]\\
				& =\E_f \Big[\int_0^t \langle g^n_r, \int_r^t\langle \mcW^x_{s-r} ,1 \rangle ds \rangle dr\Big]\\
				& \leq \constFromEq{c}{eq:est_W_time_int}\E_f\Big[ \int_0^t \sqrt{t-r} \langle g^n_r, 1 \rangle dr\Big]\\
				& = \constFromEq{c}{eq:est_W_time_int} \int_0^t \sqrt{t-r} \E_f\big[\langle g^n_r, 1 \rangle \big]dr\\
				& \leq  \constFromEq{c}{eq:est_W_time_int}b_1\int_0^t \sqrt{t-r} \E_f\big[\Leb(\{ x\colon X^{n-1}_r(x)>0\})\big]dr,
			\end{split}
		\end{equation*}
		for all $t>0$.
		Applying integration by parts yields
		\begin{equation*}
			\E_f\Big[ \int_0^t \int_\R\int_0^s\langle g^n_{r}, \mcW_{s-r}^x \rangle drdxds\Big]\leq \frac{\constFromEq{c}{eq:est_W_time_int}}{2}b_1 \int_0^t \frac{1}{\sqrt{t-s}} \int_0^s  \E_f\big[\Leb(\{x \colon X^{n-1}_r(x)>0\})\big]drds, \quad t >0, 
		\end{equation*}
		which finishes the proof. 
	\end{proof}
	
	The above considerations lead to the bound 
	\begin{equation}\label{eq:iterative_J_t_est}
		\begin{split}
				J_t^n 
            &\leq 2\Big(\frac{b_1\constFromEq{c}{eq:est_W_time_int}}{2}\Big)^n\Big(\int_0^t\frac{1}{\sqrt{t-t_{n-1}}}\int_0^{t_{n-1}} \frac{1}{\sqrt{t_{n-1}-t_{n-2}}} \\
            & \qquad \qquad \qquad \cdots \int_0^{t_1} \sqrt{t_1-t_0}\E_f[\Leb(\{x \colon X^0_{t_0}(x)>0\})]dt_0 \cdots dt_{n-1}\Big)\\
			& \leq 4\E_f[R^0_t]\Big(\frac{b_1\constFromEq{c}{eq:est_W_time_int}}{2}\Big)^n\int_0^t\frac{1}{\sqrt{t-t_{n-1}}}\int_0^{t_{n-1}} \frac{1}{\sqrt{t_{n-1}-t_{n-2}}} \cdots \int_0^{t_1} \sqrt{t_1-t_0}dt_0 \cdots dt_{n-1},
		\end{split}
	\end{equation}
	for $t>0$ and $n\geq 2$.
	\begin{lemma}\label{lem:K_t_est}
		Let $n\geq 2$ and $t \geq 0$. 
		Then we have the estimate 
		\begin{equation*}
        \begin{split}
			K_t^n &= 	\int_0^t\frac{1}{\sqrt{t-t_{n-1}}}\int_0^{t_{n-1}} \frac{1}{\sqrt{t_{n-1}-t_{n-2}}} \cdots \int_0^{t_1} \sqrt{t_1-t_0}dt_0 \cdots dt_{n-1} \\
            &=  \frac{2}{3}2^{n-1}\Big(\prod_{k=2}^{n}w_{k+2}\Big)t^{(n+2)/2},
        \end{split}
		\end{equation*}
		where $w_i = \int_0^{\pi/2} \sin^i(r)dr$ for $i \geq 2$.
	\end{lemma}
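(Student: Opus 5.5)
The plan is to evaluate the iterated integral from the inside out by induction, reducing each step to a Beta-type integral. Using the substitution $u=s\sin^2\theta$ turns each such integral into one of the Wallis integrals $w_i$. No earlier results from the paper are needed; this is a pure calculus identity.

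The basic identity I will use is the following: for $a\ge 0$ and $s>0$,
\begin{equation*}
\int_0^s \frac{u^{a}}{\sqrt{s-u}}\,du = 2\,s^{a+1/2}\int_0^{\pi/2}\sin^{2a+1}(\theta)\,d\theta = 2\,w_{2a+1}\,s^{a+1/2}.
\end{equation*}
To verify it, substitute $u=s\sin^2\theta$. Then $du=2s\sin\theta\cos\theta\,d\theta$ and $\sqrt{s-u}=\sqrt{s}\cos\theta$. The cosines cancel, leaving $2s^{a+1/2}\sin^{2a+1}\theta\,d\theta$.

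The induction then runs as follows.
\begin{enumerate}
\item The innermost integral is $\int_0^{t_1}\sqrt{t_1-t_0}\,dt_0=\frac{2}{3}t_1^{3/2}$.
\item Define $I_j(s)$ as the result after performing $j$ of the kernel integrations $\int_0^{s}(s-u)^{-1/2}(\cdot)\,du$, with $s$ playing the role of $t_{j+1}$ and with $t_n:=t$.
\item The claim to prove by induction on $j\ge 0$ is
\begin{equation*}
I_j(s)=\frac{2}{3}\,2^{j}\Big(\prod_{k=2}^{j+1}w_{k+2}\Big)s^{(j+3)/2}.
\end{equation*}
\item The base case $j=0$ is step 1.
\item For the inductive step, apply the basic identity with $a=(j+3)/2$. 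This gives the factor $2w_{j+4}=2w_{(j+1)+3}$ and raises the exponent to $(j+4)/2$, which is exactly the claim for $j+1$.
\end{enumerate}

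There are $n-1$ kernel integrations, over $t_1,\dots,t_{n-1}$, with the outermost integral in $t_{n-1}$ evaluated at $s=t$. Taking $j=n-1$ gives
\begin{equation*}
K^n_t=\frac{2}{3}\,2^{n-1}\Big(\prod_{k=2}^{n}w_{k+2}\Big)t^{(n+2)/2},
\end{equation*}
as stated. The case $t=0$ is trivial.

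The only real obstacle is bookkeeping. One must check that the Wallis indices produced are exactly $w_4,\dots,w_{n+2}$, which are the factors $w_{k+2}$ for $k=2,\dots,n$. One must also check that the number of kernel integrations is $n-1$ rather than $n$. Both checks reduce to confirming that the $j$-th step uses $2a+1=j+3$.
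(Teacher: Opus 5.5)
Your proposal is correct and follows essentially the same route as the paper: the paper also uses the substitution identity $\int_0^t r^{k/2}(t-r)^{-1/2}\,dr = 2t^{(k+1)/2}w_{k+1}$ and inducts via the recursion $K^n_t=\int_0^t (t-s)^{-1/2}K^{n-1}_s\,ds$. The only difference is that you start the induction at the innermost integral ($j=0$) rather than at $n=2$, and your index bookkeeping ($w_4,\dots,w_{n+2}$ from $n-1$ kernel integrations) checks out.
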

	\begin{proof}
		For $n = 2$ we arrive at 
		\begin{equation*}
			K^2_t = \frac{2}{3} \int_0^t \frac{t_1^{3/2}}{\sqrt{t-t_1}}dt_1 = \frac{2}{3}2 t^{4/2}w_{4},
		\end{equation*}
		by using that 
		\begin{equation}\label{eq:integral_est}
			\int_0^t \frac{r^{k/2}}{\sqrt{t-r}}dr = 2 t^{(k+1)/2}\int_0^{\pi/2}\sin^{k+1}(r)dr = 2 t^{(k+1)/2}w_{k+1},
		\end{equation}
		for $k \geq 2$. 
		For $n\geq 3$ we iteratively get 
		\begin{equation*}
        \begin{split}
			K_t^n &= \int_0^t \frac{1}{\sqrt{t-t_{n-1}}} K^{n-1}_{t_{n-1}} dt_{n-1} = \frac{2}{3}2^{n-2}\Big(\prod_{k=2}^{n-1}w_{k+2}\Big)\int_0^t\frac{t_{n-1}^{(n+1)/2}}{\sqrt{t-t_{n-1}}}dt_{n-1}\\
            &= \frac{2}{3}2^{n-1}\Big(\prod_{k=2}^{n}w_{k+2}\Big)t^{(n+2)/2}.
        \end{split}
		\end{equation*}
	\end{proof}
	
	By applying Lemma~\ref{lem:K_t_est} to \eqref{eq:iterative_J_t_est} we obtain
	\begin{equation}\label{eq:est_Jnt}
		J_t^n = \E_f\Big[\int_0^t\Leb(\{x \colon X_s^n(x) >0\}) ds\Big] \leq\frac{4}{3}\E_f[R^0_t] \constFromEq{c}{eq:est_W_time_int}^nb_1^n \Big( \prod_{k = 2}^n w_{k+2} \Big)t^\frac{n+2}{2}
	\end{equation}
	for $n\geq 2$ and similarly, we obtain
    \begin{equation}\label{eq:est_J1t}
        J_t^1 \leq \frac{4}{3} \E_f[R_t^0] \constFromEq{c}{eq:est_W_time_int} b_1 t^\frac{3}{2}.
    \end{equation}

	\section{Bounds on the Radius of the Support}\label{sec:radius}
	In order to prove the compact support property we will prove bounds for the expectation of 
	\begin{equation*}
		R^n_T = \sup\{|x| \colon \text{ there exists }  t\in [0,T] \text{ such that } \bar{X}_t^n(x) >0 \}, 
	\end{equation*}
	which are uniform in $n$ and depend only on $b_1$, $T$ and $\E_f[R_T^0]$. This is the content of the following lemma. 
	\begin{lemma}\label{lem:sup_supports_exp}
		For any $T>0$ we have 
		\begin{equation}\label{eq:support_estimate_1}
			\sup_{n\geq 1} \E_f[R^n_T] \leq \E_f[R^0_T] +T^{1/2} \sqrt{2\pi b_1 \constFromEq{c}{eq:est_v_infty_1}}\sqrt{\E_f[R^0_T]}+ \sqrt{\E_f[R_T^0]} \sum_{i=1}^\infty a_i(T) <\infty, 
		\end{equation}
		where 
		\begin{equation}\label{eq:def_ai}
			a_i(T):=   \sqrt{\frac{4\pi \constFromEq{c}{eq:est_v_infty_1}}{3} \constFromEq{c}{eq:est_W_time_int}^ib_1^{i+1} \Big( \prod_{k = 2}^i w_{k+2} \Big)T^\frac{i+2}{2}},
		\end{equation}
    with the convention that $\prod_{k=2}^1w_{k+2} =1$.
	\end{lemma}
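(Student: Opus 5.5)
Since $\bar X^n=\sum_{k=0}^n X^k$ with all $X^k\ge 0$, the positivity set of $\bar X^n$ is the union of the positivity sets of the layers. Hence
\[
R^n_T\le \max_{0\le k\le n} R^{(k)}_T\le R^0_T+\sum_{k=1}^n \bigl(R^{(k)}_T-R^0_T\bigr)^+ ,
\]
where $R^{(k)}_T$ denotes the support radius of $X^k$ on $[0,T]$. So it suffices to show, for each $k\ge1$, that $\E_f[(R^{(k)}_T-R^0_T)^+]$ (or $\E_f[R^{(k)}_T]$ in excess of $R^0_T$) is bounded by $\sqrt{2\pi b_1 c\,\E_f[R^0_T]}\,T^{1/2}$ for $k=1$ and by $\sqrt{\E_f[R^0_T]}\,a_{k-1}(T)$ for $k\ge2$. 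Summing then gives \eqref{eq:support_estimate_1}. Finiteness of $\sum_i a_i(T)$ is elementary: $w_{k+2}\sim\sqrt{\pi/(2k)}$, so $\prod_{k\le i}w_{k+2}$ decays like $C^i/\sqrt{i!}$, which beats the geometric factor $(c\,b_1T^{1/2})^i$.

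For the layer estimate I will work conditionally on $\mathcal F^{k-1}_\infty$ and use Lemma~\ref{lem:duality} with $\phi^k=0$ and $\psi^k=\psi^{m,r}$. Since $X^k_0=0$,
\[
\bbP\Bigl(X^k_s \text{ charges } \{|x|>2r\} \text{ for some } s\le T\,\Big|\,\mathcal F^{k-1}_\infty\Bigr)\le 1-\exp\Bigl(-\int_0^T\langle g^k_s,V^{\infty,r}_{T-s}\rangle ds\Bigr)\le b_1\int_0^T\!\!\int \mathbbm 1_{\{g^k_s(x)>0\}}\,V^{\infty,r}_{T-s}(x)\,dx\,ds ,
\]
where $V^{\infty,r}=\lim_m V^{m,r}$. (Strictly, the event should be phrased via the occupation measure, using continuity in $s$ and a standard argument to pass from "charges at some time" to positive occupation.) The immigration $g^k$ lives inside the positivity set of $X^{k-1}$, which is contained in $[-R^{(k-1)}_T,R^{(k-1)}_T]$. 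I will be cruder and use $R^0_T$: $g^k$ is in fact supported where $\bar X^{k-1}>0$, so its support radius is at most $\max_{j<k}R^{(j)}_T$, and the induction handles this. Take the radius to be $\rho+r$ with $\rho$ the support radius of the previous layers. By \eqref{eq:est_v_infty_1}, $V^{\infty}\le c(r)^{-2}$ on the support of $g^k$, so the conditional probability that $X^k$ exits $[-(\rho+r),\rho+r]$ is at most $c\,b_1 r^{-2}\int_0^T\Leb\{g^k_s>0\}\,ds$. Integrating the tail over $r$ gives
\[
\E\bigl[(R^{(k)}_T-\rho)^+\bigr]\le \int_0^\infty \min\bigl(1,c\,b_1 r^{-2}L\bigr)\,dr\approx 2\sqrt{c\,b_1 L},\qquad L=\int_0^T\Leb\{g^k_s>0\}\,ds .
\]
Taking expectations and using Jensen's inequality, $\E\sqrt{L}\le\sqrt{b_1^{-1}\E[\langle\!\cdot\!\rangle]}\le\sqrt{J^{k-1}_T}$.

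Finally, I insert the bounds: $J^0_T\le 2T\E_f[R^0_T]$ from \eqref{eq:Jt0_est} for $k=1$, which produces $T^{1/2}\sqrt{2\pi b_1 c\,\E R^0_T}$ up to matching the constant $\pi$ (I expect $\pi$ to come from the exact value of $\int_0^\infty\min(1,a/r^2)\,dr$, or from the choice of scale). For $k\ge2$ I use \eqref{eq:est_Jnt} and \eqref{eq:est_J1t} with $n=k-1$, producing exactly $a_{k-1}(T)\sqrt{\E R^0_T}$.

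The main obstacle is the shift $\rho$. It is random, namely the support radius of the earlier layers, and the support of $g^k$ is not centered in a way that makes the barrier $\psi^{m,r}$ deterministic. I would handle this by conditioning on $\mathcal F^{k-1}_\infty$, translating and symmetrizing the barrier to the interval $[-(\rho+r),\rho+r]$, which is legitimate because $\rho$ is $\mathcal F^{k-1}_\infty$-measurable. The additive bound then telescopes. To get $\E R^0_T$ rather than $\E\max_j R^{(j)}$ inside the square roots, I rely on the fact that $J^{k-1}$ is already expressed through $\E R^0_T$.
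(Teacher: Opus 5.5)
Your argument is essentially the paper's: condition on $\mathcal F^{k-1}_\infty$, use the duality with the blow-up solution $V^{\rho+r}=\lim_m V^{m,\rho+r}$, bound $V^{\rho+r}_{T-s}\le c\,r^{-2}$ on the support of $g^k_s$ via \eqref{eq:est_v_infty_1}, integrate the tail in $r$, apply Jensen, and insert \eqref{eq:Jt0_est}, \eqref{eq:est_J1t}, \eqref{eq:est_Jnt}. Two points need correcting.

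\textbf{The reference point.} Your opening reduction to bounding $\E_f[(R^{(k)}_T-R^0_T)^+]$ cannot be delivered by your layer estimate. Using $R^0_T$ is not a cruder choice but an invalid one, because $g^k$ need not live in $[-R^0_T,R^0_T]$. Already $g^2$ lives where $X^1>0=X^0$, and $X^1$ spreads beyond $R^0_T$. So for a barrier at $R^0_T+r$ there is no bound $V\le c\,r^{-2}$ on $\supp g^k_s$.

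The reference must be the $\mathcal F^{k-1}_\infty$-measurable radius $\rho=R^{k-1}_T$ of $\bar X^{k-1}$ (paper's notation). The decomposition to use is the exact telescoping identity
\[
R^n_T=R^0_T+\sum_{k=1}^n\bigl(R^k_T-R^{k-1}_T\bigr),\qquad R^k_T-R^{k-1}_T=\bigl(R^{(k)}_T-R^{k-1}_T\bigr)^+ ,
\]
which is exactly the paper's $\E_f[R^n_T]=\E_f[R^{n-1}_T]+\E_f[R^n_T-R^{n-1}_T]$, and what your later paragraphs in effect use. Alternatively, your correct first observation that $\supp g^k_s\subset[-R^{(k-1)}_T,R^{(k-1)}_T]$ lets you take $\rho=R^{(k-1)}_T$ together with the chain bound $\max_{k\le n}R^{(k)}_T\le R^{(0)}_T+\sum_{j=1}^n\bigl(R^{(j)}_T-R^{(j-1)}_T\bigr)^+$.

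\textbf{The constant.} Linearizing $1-e^{-x}\le x$ and integrating $\min(1,a r^{-2})$ gives $2\sqrt a$, not $\sqrt{\pi a}$. You would then prove the lemma only with $\sqrt\pi$ replaced by $2$. The $\pi$ does not come from $\int_0^\infty\min(1,a/r^2)\,dr$. To get the stated constants, keep $1-\exp(-c\,b_1 r^{-2}L)$ and use
\[
\int_0^\infty\bigl(1-e^{-a/r^2}\bigr)\,dr=\sqrt{\pi a}.
\]
Then $\E\sqrt{\pi c\,b_1 L}\le\sqrt{\pi c\,b_1 J^{k-1}_T}$. This yields exactly $T^{1/2}\sqrt{2\pi b_1 c}\,\sqrt{\E_f[R^0_T]}$ for $k=1$ and $a_{k-1}(T)\sqrt{\E_f[R^0_T]}$ for $k\ge 2$.
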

	\begin{proof}
    First note that $\E[R_t^0] <\infty$ by \cite[Theorem 1]{Iscoe89}.
	By definition, we have the decomposition
	\begin{equation*}
		\E_f[R^{n}_T ] = \E_f[R_T^{n-1}] + \E_f[R_T^{n}-R^{n-1}_T],
	\end{equation*}
	where both $R_T^{n-1} \geq 0$ and $R_T^{n}-R^{n-1}_T\geq 0$.
	This means that our main task lies in finding bounds for 
	\begin{equation*}
		\E_f[R_T^{n}-R^{n-1}_T] = \int_{0}^\infty \bbP_f(R_T^n > r+R^{n-1}_T)dr,
	\end{equation*}
	where the above identity follows from the fact that $R^n_T \geq R^{n-1}_T$ almost surely.
	Noting that $R^n_T >R^{n-1}_T+r$ if and only if $\int_0^T\langle X^n_t, \mathbbm{1}_{[-R^{n-1}_T-r, R^{n-1}_T+r]^c}\rangle dt >0$ it is enough to derive bounds for
	\begin{equation}\label{eq:charge_Rn_est}
		\begin{split}
			&\bbP_f\Big(\int_0^T\langle X^n_t, \mathbbm{1}_{[-R^{n-1}_T-r, R^{n-1}_T+r]^c}\rangle dt >0\Big)\\
			& \qquad  = \lim_{m \rightarrow \infty} \E _f\Big[ 1-\exp\Big(-m\int_0^T\langle X^n_t, \mathbbm{1}_{[-R^{n-1}_T-r, R^{n-1}_T+r]^c}\rangle dt \Big)\Big].
		\end{split}
	\end{equation}
	Now, by conditioning on $\mathcal{F}_\infty^{n-1}$, approximation and using the duality formula \eqref{eq:duality_relation} we  arrive at 
	\begin{equation*}
		\lim_{m\rightarrow \infty}\E_f\Big[	\exp\Big(-m\int_0^T\langle X^n_t, \mathbbm{1}_{[-R^{n-1}_T-r, R^{n-1}_T+r]^c}\rangle dt \Big)\Big] = \E_f\Big[\exp\Big(- \int_0^T \langle g^n_{s}, V^{r+R^{n-1}_T}_{T-s}\rangle ds\Big)\Big].
	\end{equation*}
	Here, $Vr$ for $r >0$ is defined as 
	\begin{equation}\label{eq:limit_vkm}
		V^r_t(x) =\lim_{m \rightarrow \infty}V_t^{m,r}(x),
	\end{equation}
	where $V^{m,r}$ is the unique  mild solution to \eqref{eq:parabolic_m}.  
	By definition, we know that the support of $g^n_s$ is contained in $[-R^{n-1}_s, R^{n-1}_s]$ for all $s\in [0,T]$. Together with \eqref{eq:est_v_infty_1} this allows us to get the bound
	\begin{equation*}
		\int_0^T\langle g^n_{s}, V^{r+R^{n-1}_T}_{T-s}\rangle ds \leq \constFromEq{c}{eq:est_v_infty_1}r^{-2}\int_0^Tb_1\Leb(\{ x\colon X^{n-1}_s(x)>0\})ds.
	\end{equation*} 
	Using this, \eqref{eq:charge_Rn_est} and \eqref{eq:est_Jnt}, we get for $n \geq 2$ that  
	\begin{equation*}
		\begin{split}
			&\int_0^\infty\bbP(R_T^n > r+R^{n-1}_T)dr \leq \E_f \Big[\int_0^\infty
			1-\exp\Big(-r^{-2}\constFromEq{c}{eq:est_v_infty_1}b_1\int_0^T \Leb(\{x \colon X_s^{n-1}(x) >0\}ds\Big)dr\Big]\\
            &\leq \sqrt{\pi b_1\constFromEq{c}{eq:est_v_infty_1}} \sqrt{J_T^{n-1}},\\
		\end{split}
	\end{equation*}
	and similarly for $n = 1$ that 
	\begin{equation*}
		\begin{split}
			&\int_0^\infty\bbP(R_T^1 > r+R^{0}_T)dr \leq \E_f \Big[\int_0^\infty
			1-\exp(-\constFromEq{c}{eq:est_v_infty_1}b_1r^{-2}\int_0^T\Leb(\{x \colon X^{0}_s(x)>0\}) ds)dr\Big]\\
			&\leq \sqrt{\pi b_1\constFromEq{c}{eq:est_v_infty_1}} \sqrt{J_T^0}\\
			&  \leq T^{1/2} \sqrt{2\pi b_1 \constFromEq{c}{eq:est_v_infty_1}}\sqrt{\E[R^0_T]}.
		\end{split}
	\end{equation*}
	Overall, we obtain
	\begin{equation}\label{eq:support_est_prelim}
	\begin{split}
		\E_f[R^n_T] &\leq \E_f[R_T^0] +\E_f[R^1_T-R^0_T]+ \sqrt{\pi b_1\constFromEq{c}{eq:est_v_infty_1}}\sum_{i = 2}^\infty\sqrt{J^{i-1}_T} \\
	&\leq  \E_f[R_T^0] +  T^{1/2} \sqrt{2\pi b_1 \constFromEq{c}{eq:est_v_infty_1}}\sqrt{\E_f[R^0_T]} + \sqrt{\pi b_1\constFromEq{c}{eq:est_v_infty_1}}\sum_{i = 2}^\infty\sqrt{J^{i-1}_T} ,
	\end{split}
	\end{equation}
	and by using \eqref{eq:est_Jnt} and \eqref{eq:est_J1t} we get 
	\begin{equation}\label{eq:support_estimate_2}
		\sum_{i = 2}^\infty\sqrt{J^{i-1}_T} \leq \sqrt{\E_f[R^0_T]} \sum_{i =2}^\infty\sqrt{\frac{4}{3}\constFromEq{c}{eq:est_W_time_int}^{i-1}b_1^{i-1} \Big( \prod_{k = 2}^{i-1} w_{k+2} \Big)T^\frac{i+1}{2}}
		 = \sqrt{\E_f[R_T^0]}\sum_{i=1}^\infty d_i(T),
	\end{equation}
	where 
	\begin{equation}\label{eq:def_di}
		d_i(T) =\sqrt{\frac{4}{3}\constFromEq{c}{eq:est_W_time_int}^ib_1^i \Big( \prod_{k = 2}^i w_{k+2} \Big)T^\frac{i+2}{2}}
	\end{equation}
    and we use the convention $\prod_{k=2}^1w_{k+2} =1$.
	By substitution, we get that 
	\begin{equation*}
		w_i= \frac{\Gamma(\frac{i+1}{2})\Gamma(\frac{1}{2})}{2\Gamma(\frac{i}{2}+1)},
	\end{equation*}
	for $i\geq 2$, where $\Gamma$ is the gamma function. Using Stirling's approximation for the gamma function thus reveals that $w_i \rightarrow 0$ as $i \rightarrow \infty$.
	We thus get
	\begin{equation*}
		\frac{d_{i+1}}{d_i} = \sqrt{\constFromEq{c}{eq:est_W_time_int}b_1T^{1/2}}\sqrt{w_{i+3}} \rightarrow 0,
	\end{equation*}
	as $i\rightarrow \infty$, which shows that the series on the right-hand side of \eqref{eq:support_estimate_1} is convergent. Thus, \eqref{eq:support_est_prelim}, \eqref{eq:support_estimate_2} and \eqref{eq:def_di} imply \eqref{eq:support_estimate_1} and the proof is finished. 
	\end{proof}
	
	\section{Proof of Theorem~\ref{th:convergence_X_bar_n}}\label{sec:series_rep}

	In this section, we prove Theorem~\ref{th:convergence_X_bar_n}. To this end we derive the following preliminary results. 
	\begin{proposition}\label{prop:l1_conv}
		Let $\phi\in \mcC_b^2(\R)$ and $T>0$. Then it follows that the sequence $\langle \bar{X}^n,\phi \rangle$ converges in $L^1(\Omega, \mcC([0,T]))$. 
	\end{proposition}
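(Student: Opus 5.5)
Since $\bar X^n=\sum_{k=0}^n X^k$ with each $X^k\ge 0$, $\langle \bar X^n,\phi\rangle-\langle\bar X^{n-1},\phi\rangle=\langle X^n,\phi\rangle$. It therefore suffices to show that the series $\sum_{n}\E\big[\sup_{t\le T}|\langle X^n_t,\phi\rangle|\big]$ converges. Then $(\langle\bar X^n,\phi\rangle)_n$ is Cauchy in $L^1(\Omega,\mcC([0,T]))$, and completeness of that space gives the claim. Because $|\langle X^n_t,\phi\rangle|\le\|\phi\|_\infty\langle X^n_t,1\rangle$, the task reduces to summable bounds on $\E[\sup_{t\le T}\langle X^n_t,1\rangle]$ for $n\ge1$; the term $n=0$ is a super-Brownian motion with finite expected supremum of mass.

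For $n\ge1$, $X^n_0\equiv0$, and conditionally on $\mathcal F^{n-1}_\infty$ the process $X^n$ is a super-Brownian motion with immigration $g^n$. Taking $\phi\equiv 1$ (via a cutoff approximation if needed) gives the semimartingale decomposition
\[
\langle X^n_t,1\rangle=\int_0^t\langle g^n_s,1\rangle ds+M^n_t,\qquad \langle M^n\rangle_t=\int_0^t\langle X^n_s,1\rangle ds .
\]
The drift term is bounded by $\int_0^T\langle g^n_s,1\rangle ds\le b_1\int_0^T\Leb(\{X^{n-1}_s>0\})ds$, whose expectation is $b_1J^{n-1}_T$. Taking expectations first gives $\E\langle X^n_t,1\rangle=\E\int_0^t\langle g^n_s,1\rangle ds\le b_1J^{n-1}_T$. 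By Burkholder--Davis--Gundy and Jensen,
\[
\E\sup_{t\le T}|M^n_t|\le C\,\E\Big[\Big(\int_0^T\langle X^n_s,1\rangle ds\Big)^{1/2}\Big]\le C\big(T\,b_1J^{n-1}_T\big)^{1/2}.
\]
Hence
\[
\E\Big[\sup_{t\le T}\langle X^n_t,1\rangle\Big]\le b_1J^{n-1}_T+C\sqrt{Tb_1J^{n-1}_T}.
\]

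By \eqref{eq:est_Jnt} and \eqref{eq:est_J1t}, $J^{n-1}_T\le \frac43\E_f[R^0_T](\constFromEq{c}{eq:est_W_time_int}b_1)^{n-1}\big(\prod_{k=2}^{n-1}w_{k+2}\big)T^{(n+1)/2}$, and $\E_f[R^0_T]<\infty$ by \cite[Theorem 1]{Iscoe89}. Since $w_i\to0$, both $\sum_n J^{n-1}_T$ and $\sum_n\sqrt{J^{n-1}_T}$ converge by the ratio test, exactly as in the proof of Lemma~\ref{lem:sup_supports_exp}. This yields the summability, so the partial sums converge absolutely in $L^1(\Omega,\mcC([0,T]))$. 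Continuity of $t\mapsto\langle X^n_t,\phi\rangle$ follows from $X^n\in\mcC(\R_+,\mcC^+_{tem})$ together with finiteness of mass.

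The main technical obstacle is justifying the semimartingale decomposition with $\phi\equiv1$ (or $\phi\in\mcC^2_b$) rather than with compactly supported test functions, and checking that $M^n$ is a true martingale. I would handle this with a standard localization: use cutoff functions $\phi_R\uparrow1$ and stopping times $\tau_K=\inf\{t:\langle X^n_t,1\rangle>K\}$, then pass to the limit by monotone convergence and Fatou. This is legitimate because the expected mass is already bounded by $b_1J^{n-1}_T<\infty$. If one prefers to keep a general $\phi\in\mcC^2_b$, the extra drift term $\int_0^t\langle X^n_s,\frac12\phi''\rangle ds$ is bounded by $\frac12\|\phi''\|_\infty T\sup_{s\le T}\langle X^n_s,1\rangle$, so the same bound applies.
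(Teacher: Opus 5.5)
Your proposal is correct and follows essentially the paper's argument: a semimartingale decomposition with the immigration drift bounded by $b_1J^{n-1}_T$, Burkholder--Davis--Gundy for the martingale part, and summability from the bounds \eqref{eq:est_Jnt}--\eqref{eq:est_J1t}. The one difference is that you bound each summand $X^n$ separately and then sum, so you need $\sum_n\sqrt{J^{n-1}_T}<\infty$ (supplied by the ratio test as in Lemma~\ref{lem:sup_supports_exp}), whereas the paper treats the whole tail $\sum_{k=\ell}^n X^k$ as one superprocess with immigration $\sum_{k=\ell}^n g^k$, so BDG yields $\big(\sum_{k=\ell}^n J^{k-1}_T\big)^{1/2}$ and only $\sum_k J^k_T<\infty$ is needed.
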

	
	\begin{proof}
		For any $n \geq \ell \geq 1$, let us denote 
		\begin{equation*}
			\bar{X}^{n,\ell} = \sum_{k=\ell}^n X^k, 
		\end{equation*}
		where $(X^0, \ldots, X^n)$ is the solution to \eqref{eq:SPDE_n} with $X_0^k \equiv 0$ for $k \geq 1$ and $X^0_0 \equiv f$.  It is sufficient to prove that 
		\begin{equation}\label{eq:cauchy_1}
			\lim_{n,\ell \rightarrow \infty, n\geq \ell}\E_f\big[\sup_{0 \leq t \leq T}	\langle\bar{X}_t^{n,\ell}, \phi \rangle \big] = 0,
		\end{equation}
        for nonnegative $\phi \in \mcC_b^2(\R)$.
		After extending the probability space and using the independence of white noises $\dot{W}^0, \ldots, \dot{W}^n$, it is easy to see  that $\bar{X}^{n,\ell}$ is a solution of 
		\begin{equation*}
			d_t\bar{X}_t^{n,\ell}(x) = \frac{1}{2}\Delta \bar{X}_t^{n,\ell}(x) +\sum_{k=\ell}^ng^k(t,x)+ \sqrt{\bar{X}_t^{n,\ell}(x)}\dot{W}(t,x), \quad \bar{X}_0^{n,\ell} = 0, x\in \R, t\geq 0,
		\end{equation*}
		where $\dot{W}$ is space-time white noise and $g^k$ is defined as in \eqref{eq:def_g_k}. Thus, $\bar{X}^{n,\ell}$ in particular satisfies
		\begin{equation*}
			\langle \bar{X}_t^{n,\ell}, \phi \rangle = \int_0^t \frac{1}{2}\langle \bar{X}_s^{n,\ell}, \Delta \phi \rangle ds + \int_0^t \langle \sum_{k = \ell}^n g^k_s, \phi \rangle ds + M_t^{n,\ell}(\phi), 
		\end{equation*}
		where $M_t^{n,\ell}(\phi)$ is a square integrable martingale with quadratic variation given by 
		\begin{equation*}
			\langle M^{n,\ell}(\phi) \rangle_t = \int_0^t \langle \bar{X}^{n,\ell}_s, \phi^2\rangle ds, \quad t \geq 0.
		\end{equation*}
		Thus, we arrive at the bound
		\begin{equation*}
			\begin{split}
				&\E_f\big[\sup_{0 \leq t \leq T}	\langle\bar{X}_t^{n,\ell}, \phi \rangle \big] \\
				& \leq \E_f\Big[\int_0^T \frac{1}{2}|\langle \bar{X}_s^{n,\ell}, \Delta \phi \rangle| ds \Big] + \E_f\Big[  \int_0^T \langle \sum_{k = \ell}^n g^k_s, \phi \rangle ds \Big] + \E[\sup_{0 \leq t \leq T} |M_t^{n,\ell}(\phi)|]\\
				& =: I^1_{n,\ell, T} + I^2_{n,\ell, T} + I^3_{n,\ell,T}. 
			\end{split}
		\end{equation*}
		The first term can be estimated by 
		\begin{equation*}
			I^1_{n,\ell, T}  \leq \frac{1}{2}\| \Delta \phi\|_\infty \E\Big[\int_0^T \langle \bar{X}_s^{n,\ell}, 1 \rangle ds \Big] \leq  T\frac{1}{2}\| \Delta \phi\|_\infty \E\Big[\int_0^T \langle \sum_{k= \ell}^ng^k_s,1 \rangle ds \Big] \leq \frac{b_1T}{2}\| \Delta \phi\|_\infty\sum_{k = \ell}^n J^{k-1}_T,
		\end{equation*}
		where $J^{k}_T$ for $k\geq 0$ is defined as in \eqref{eq:def_Jnt}. By an argument similar to that after  \eqref{eq:support_estimate_2} we get that
		\begin{equation*}
			\sum_{k = 1}^\infty J_T^k <\infty.
		\end{equation*}
		From this, we can deduce that 
		\begin{equation*}
			I^1_{n,\ell, T} \rightarrow 0,
		\end{equation*}
		as $n,\ell\rightarrow \infty$. Similarly, we get that 
		\begin{equation*}
			I^2_{n,\ell, T} \rightarrow 0, 
		\end{equation*}
		as $n,\ell\rightarrow \infty$. Thus, it only remains to prove the convergence of $I^3_{n,\ell, T}$.  The Burkholder-Davis-Gundy inequality yields 
		\begin{equation*}
			I^3_{n,\ell,T} \leq C \E_f\Big[ \int_0^T \langle \bar{X}^{n,\ell }_s, \phi^2\rangle ds\Big]^{1/2} \leq C \|\phi\|_\infty \E_f\Big[ \int_0^T \langle \bar{X}^{n,\ell }_s, 1\rangle ds\Big]^{1/2},
		\end{equation*}
		which, once again using that 
		\begin{equation*}
			\E_f \Big[ \int_0^T \langle \bar{X}^{n,\ell}_s,1 \rangle ds\Big] \leq T\E_f\Big[ \int_0^T \sum_{k =\ell}^n \langle g^k_s,1\rangle ds\Big]
		\end{equation*}
		demonstrates, in the same way as above, that 
		\begin{equation*}
			I^3_{n,\ell, T} \rightarrow 0, 
		\end{equation*}
		as $n,\ell\rightarrow \infty$.
		This shows \eqref{eq:cauchy_1} and finishes the proof. 
	\end{proof}
	

	\begin{lemma}\label{lem:as_conv}
		There exists a jointly measurable map $\bar{X}\colon \Omega \times \R_+\times \R \rightarrow \R_+\cup \{\infty\}$  such that $\bbP$-a.s. we have 
		\begin{equation*}
			\lim_{n \rightarrow  \infty} \bar{X}^n_t(x) = \bar{X}_t(x), \quad \forall (t,x) \in \R_+\times \R.
		\end{equation*} 
		Furthermore, $\bbP$-a.s.
		\begin{equation*}
		\sum_{k = 1}^n g^k(t,x) \rightarrow b_1 \mathbbm{1}_{\bar{X}_t(x)>0}, \quad (t,x) \in \R_+\times\R,
		\end{equation*}
		as $n \rightarrow \infty$. 
	\end{lemma}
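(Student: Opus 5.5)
Both claims follow from two facts: the partial sums are pointwise nondecreasing in $n$, and the immigration indicators $g^k$ occupy pairwise disjoint regions of $\R_+\times\R$.

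\textbf{Step 1: monotone limit.} Each $X^k$ takes values in $\mcC_{tem}^+$, so every $X^k_t(x)\ge 0$. Hence, for every $\omega$ and every $(t,x)$, the sequence $\bar X^n_t(x)=\sum_{k=0}^n X^k_t(x)$ is nondecreasing in $n$. We therefore define
\begin{equation*}
\bar X_t(x):=\sup_{n}\bar X^n_t(x)=\sum_{k=0}^\infty X^k_t(x)\in[0,\infty].
\end{equation*}
The processes $X^k$ are continuous in $(t,x)$, so each $\bar X^n$ is jointly measurable on $\Omega\times\R_+\times\R$. A countable supremum of jointly measurable maps is jointly measurable, so $\bar X$ is too. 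Pointwise convergence $\bar X^n_t(x)\to\bar X_t(x)$ then holds for every $(t,x)$ and every $\omega$, and no exceptional null set is needed.

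\textbf{Step 2: disjointness and the immigration sum.} By \eqref{eq:def_g_k}, $g^k(t,x)\neq 0$ forces $\sum_{i<k}g^i(t,x)=0$. An induction on $k$ then shows that at each $(t,x)$ at most one $g^k$ is nonzero, and it equals $b_1$. Consequently
\begin{equation*}
G^n(t,x):=\sum_{k=1}^n g^k(t,x)\in\{0,b_1\}
\end{equation*}
is nondecreasing in $n$ and converges pointwise, for every $\omega$, to some $G\in\{0,b_1\}$. It remains to prove that $G(t,x)=b_1$ exactly when $\bar X_t(x)>0$.

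\emph{If $G(t,x)=b_1$, then $\bar X_t(x)>0$.} I would avoid any argument that needs $X^k_t(x)>0$ at the exact point, since that is not guaranteed. Instead I would show $\{G^n=b_1\}\subseteq\{\bar X^{n-1}>0\}$ by induction on $n$. For $n=1$ this is the definition of $g^1$. Suppose the inclusion holds for $n-1$. If $G^n(t,x)=b_1$, then either $G^{n-1}(t,x)=b_1$, in which case the induction hypothesis gives $\bar X^{n-2}_t(x)>0$, or $g^n(t,x)=b_1$, in which case $X^{n-1}_t(x)>0$ by definition. Both cases give $\bar X^{n-1}_t(x)>0$. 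Since $\bar X^{n-1}\le\bar X$, we get $\bar X_t(x)>0$.

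\emph{If $\bar X_t(x)>0$, then $G(t,x)=b_1$.} Choose $k$ with $X^{k}_t(x)>0$. If $G^{k}(t,x)=0$, then $\sum_{i\le k}g^i(t,x)=0$, so by definition $g^{k+1}(t,x)=b_1$. In either case $G^{k+1}(t,x)=b_1$, and hence $G(t,x)=b_1$. This gives $G=b_1\mathbbm 1_{\bar X>0}$ everywhere, for every $\omega$.

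\textbf{Measurability caveat.} The $X^k$ are built recursively through conditional laws, so I would first realize all of them on one probability space. Kolmogorov extension applied to the consistent family of laws of $(X^0,\dots,X^n)$ from Section~\ref{sec:construction_seq} does this. Once all $X^k$ live on a single space, the argument above is purely deterministic and holds for each fixed $\omega$. The main obstacle is therefore not probabilistic. It is the bookkeeping in Step~2, in particular handling points where $X^{k-1}_t(x)>0$ while an earlier layer already had $g^i(t,x)\neq 0$; the induction on $\{G^n=b_1\}\subseteq\{\bar X^{n-1}>0\}$ is designed to handle exactly this case. Finiteness of $\bar X$ is not claimed in the lemma, so $\bar X=\infty$ is allowed. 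Finiteness almost everywhere would follow from Proposition~\ref{prop:l1_conv} together with Fatou's lemma, but the lemma does not need it.
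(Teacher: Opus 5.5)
Your proposal is correct and follows essentially the same route as the paper: a monotone limit of the nonnegative partial sums, then identification of $\sum_k g^k$. For the latter, $g^k(t,x)=b_1$ forces $X^{k-1}_t(x)>0$, while $X^k_t(x)>0$ together with $\sum_{i\le k}g^i(t,x)=0$ forces $g^{k+1}(t,x)=b_1$; the paper states this second direction as its contrapositive. Your induction for the first inclusion is more than is needed, since $g^k(t,x)=b_1$ already gives $X^{k-1}_t(x)>0$ at that exact point, and your remarks on joint measurability and on realizing all layers on one space are correct details that the paper leaves implicit.
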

	\begin{proof}
		The  almost sure convergence of $\bar{X}^n$ follows from the fact that $\bar{X}^n$ takes values in $\mcC(\R_+, \mcC_{tem}^+)$ and 
		\begin{equation*}
			\bar{X}^n_t(x) \leq \bar{X}^{n+1}_t(x), 
		\end{equation*}
		almost surely, for all $n\geq 0$ and $(t,x) \in \R_+\times\R$. Indeed, it easily follows that we find a measurable set $A\subset \Omega$ such that $\bbP(A) = 1$ and   
		\begin{equation*}
			\bar{X}^n_t(\omega, x) \leq \bar{X}^{n+1}_t(\omega, x), \quad \omega \in A, (t,x) \in \R_+\times\R, 
		\end{equation*}
		and thus the convergence follows by monotonicity.
		It remains to prove that
		\begin{equation*}
			g(t,x):=\sum_{k = 1}^\infty g^k(t,x) = b_1\mathbbm{1}_{\bar{X}_t(x)>0}, \quad (t,x)\in\R_+\times \R
		\end{equation*} 
		on $A$. Let $\omega\in A$ and $(t,x) \in \R_+\times \R$ be arbitrary.
		Note that $g$ can only take the values $0$ or $b_1$. Suppose that $g(t,x) =b_1$. This implies that there exists $n^\ast\in \N$ such that $\bar{X}_t^{n^\ast}(x)>0$. Thus, by monotonicity, this also implies that $\bar{X}_t(x) >0$. On the other hand, suppose that $g(t,x) = 0$. This implies that $X^k_t(x) = 0$ for all $k \geq 0$ and thus $\bar{X}_t(x) = 0$. This completes the proof.
		
	\end{proof}
    \begin{proof}[Proof of Theorem~\ref{th:convergence_X_bar_n}]
		As a first step in the proof we show that Proposition~\ref{prop:l1_conv} implies that the sequence $\bar{X}^n$ converges in $L^1(\Omega, \mcC([0,T], (\mathcal{M}_F^+, d_{bl})))$ to a limit $\tilde{X}$.   Since the space $L^1(\Omega, \mcC([0,T], (\mathcal{M}_F^+, d_{bl})))$ is complete, it is sufficient to prove that $\bar{X}^n$ is a Cauchy sequence. That is, we need to prove
		\begin{equation}\label{eq:cauchy_seq}
			\begin{split}
			&\lim_{n , m \rightarrow \infty, n\geq m}	\E[\sup_{0 \leq t \leq T} d_{bl}(\bar{X}^n_t, \bar{X}^m_t)]\\
			& \quad = \lim_{n , m \rightarrow \infty, n\geq m}	\E[\sup_{0 \leq t \leq T} \sup_{\phi\in \mathcal{L}}\langle \bar{X}_t^n-\bar{X}_t^m, \phi \rangle ] = 0,
			\end{split}
		\end{equation}
		where 
		\begin{equation*}
			\mathcal{L} = \{ \phi \colon \R \rightarrow \R | \|\phi\|_\infty \leq 1, \|\phi\|_{Lip} \leq 1\}.
		\end{equation*}
        Since $\bar{X}^n_t-\bar{X}^m_t \geq 0$ it follows that 
        \begin{equation*}
            \lim_{n , m \rightarrow \infty, n\geq m}	\E[\sup_{0 \leq t \leq T} d_{bl}(\bar{X}^n_t, \bar{X}^m_t)] = \lim_{n , m \rightarrow \infty, n\geq m}	\E[\sup_{0 \leq t \leq T} \langle \bar{X}_t^n-\bar{X}_t^m, 1\rangle ],
        \end{equation*}
        and thus \eqref{eq:cauchy_seq} follows by Proposition~\ref{prop:l1_conv}.
		  \\$~$\\
		In a second step, we verify that for each $\phi\in \mathcal{C}_c^2$ the limit $\tilde{X}$ of $(\bar{X}^n)_{n\geq 1}$  in $L^1(\Omega, \mcC([0,T], \mathcal{M}_F^+))$ satisfies 
		\begin{equation}\label{eq:mp_lim}
			\langle \tilde{X}_t, \phi \rangle =\langle X_0, \phi \rangle + \int_0^t \frac{1}{2}\langle \tilde{X}_s, \Delta \phi \rangle ds + \int_0^t \langle \sum_{k = 1}^\infty g^k_s, \phi \rangle ds + M_t(\phi), \quad t \in [0,T],
		\end{equation}
		where $M(\phi)$ is a continuous martingale with quadratic variation given by 
		\begin{equation}\label{eq:quad_var_lim}
				\langle M(\phi) \rangle_t = \int_0^t \langle \tilde{X}_s , \phi^2\rangle ds, \quad t\in [0,T].
		\end{equation}
		To this end, note that $\bar{X}^n$ satisfies 
		\begin{equation}\label{eq:martingale_for_sum}
			\langle \bar{X}^n_t, \phi \rangle =\langle X_0, \phi \rangle + \int_0^t \frac{1}{2}\langle \bar{X}^n_s, \Delta \phi \rangle ds + \int_0^t \langle \sum_{k = 1}^n g^k_s, \phi \rangle ds + M_t^n(\phi), \quad t \in [0,T],
		\end{equation}
		where $M^n$ is a continuous square integrable martingale with quadratic variation given by 
		\begin{equation}\label{eq:quadratic_var}
				\langle M^n(\phi) \rangle_t = \int_0^t \langle \bar{X}^n_s , \phi^2\rangle ds, \quad t\in [0,T].
		\end{equation}
		By Proposition~\ref{prop:l1_conv} we get that $	\langle \bar{X}^n_t, \phi \rangle$ and $\int_0^t \frac{1}{2}\langle \bar{X}^n_s, \Delta \phi \rangle ds $ converge in $L^1(\Omega, \mcC([0,T]))$ to  $	\langle \tilde{X}_t, \phi \rangle$ and $\int_0^t \frac{1}{2}\langle \tilde{X}_s, \Delta \phi \rangle ds $, respectively. Furthermore, by the dominated convergence theorem we get that 
		$\int_0^t\sum_{k = 1}^n \langle g^k_s, \phi \rangle ds$ converges in $L^1(\Omega, \mcC([0,T]))$ to  $\int_0^t\sum_{k = 1}^\infty \langle g^k_s, \phi \rangle ds$.  Note that  $M^n(\phi)$ is a martingale with respect to $(\mathcal{F}^\infty_t)_{t\geq 0}$, where $\mathcal{F}^\infty_t = \sigma(\cup_{k = 0}^\infty\mathcal{F}_t^k)$. This implies that $M^n(\phi)$ converges in $L^1(\Omega, \mcC([0,T]))$ to a continuous martingale $M(\phi)$.
        Furthermore, by the Burkholder--Davis--Gundy inequality and the orthogonality of the martingales corresponding to the independent summands \(X^k\), for \(m<n\), we get
\[
\mathbf E\Big[\sup_{t\le T}|M_t^n(\varphi)-M_t^m(\varphi)|^2\Big]
\le C_\varphi\,\mathbf E\int_0^T\Big\langle \sum_{k=m+1}^n X_s^k,1\Big\rangle ds \rightarrow 0,
\]
where the convergence follows from Proposition~\ref{prop:l1_conv}; hence \(M^n(\varphi)\to M(\varphi)\) in \(L^2(\Omega,C([0,T]))\).  Applying Proposition~\ref{prop:l1_conv} again to \(\varphi^2\), we have
\[
\int_0^t\langle \bar X_s^n,\varphi^2\rangle ds
\rightarrow
\int_0^t\langle \widetilde X_s,\varphi^2\rangle ds
\quad\text{in }L^1(\Omega,C([0,T])),
\]
which implies 
\[
\langle M(\varphi)\rangle_t
=
\int_0^t\langle \widetilde X_s,\varphi^2\rangle ds .
\]
        This demonstrates that $\tilde{X}$ satisfies the martingale problem \eqref{eq:mp_lim}. \\
		In the last step we deduce that the limit $\tilde{X}$ indeed solves \eqref{eq:SPDE1_sup}.  Denote by $\bar{X}$ the process from Lemma~\ref{lem:as_conv}. Clearly, we get 
		\begin{equation*}
			\tilde{X}_t(dx) = \bar{X}_t(x)dx, \quad \forall t \in [0,T], 
		\end{equation*}
		almost surely. Thus, by using Lemma~\ref{lem:as_conv}, we obtain that 
		\begin{equation*}
			\langle \tilde{X}_t, \phi \rangle =\langle X_0, \phi \rangle + \int_0^t \frac{1}{2}\langle \tilde{X}_s, \Delta \phi \rangle ds + \int_0^t \langle b_1 \mathbbm{1}_{\tilde{X}_s(\cdot)>0}, \phi \rangle ds + M_t(\phi), \quad t \in [0,T].
		\end{equation*}
		Furthermore, along similar lines as \cite{Shiga94} it follows that $(\bar{X}^n)_{n\geq 1}$ is tight in $\mcC([0,T], \mcC_{tem}^+)$. This implies that $\tilde{X} \in \mcC([0,T], \mcC_{tem}^+)$ almost surely.  
		Finally, using standard arguments as in \cite[Proof of Lemma 2.4]{Konno88} and after extending the filtered probability space so that it supports an additional, independent space-time white noise, it is possible to show that $\tilde{X}$ satisfies \eqref{eq:SPDE1_sup} and thus $\tilde{X}$ is the unique weak solution to \eqref{eq:SPDE1_sup} in $\mcC([0,T], \mcC_{tem}^+)$.
	\end{proof}
	
	\section{Proof of Proposition~\ref{prop:expectation_RT} and Theorem~\ref{th:compact_support}}\label{sec:compact_support}
	In this section, we prove Proposition~\ref{prop:expectation_RT} and Theorem~\ref{th:compact_support}.
	\begin{proof}[Proof of Proposition~\ref{prop:expectation_RT}]
        If $f \equiv 0$, then the identically zero process is a weak solution of \eqref{eq:SPDE1_sup}, and weak uniqueness implies that $X \equiv 0$ almost surely. Thus, we can assume without loss of generality that $f \neq 0$.
        
		By Theorem~\ref{th:convergence_X_bar_n}, the unique weak solution $X$ to \eqref{eq:SPDE1_sup} can be constructed as a monotone limit of $(\bar{X}^n)_{n\geq 1}$ and the maximal radius of the support of $X$ is a monotone limit of the maximal radius of the supports of $\bar{X}^n$.
		Thus, by the monotone convergence theorem and Lemma~\ref{lem:sup_supports_exp} we find that 
		\begin{equation}\label{eq:RT_exp}
			\E_f[R_T] = \lim_{n \rightarrow \infty}\E_f[R^n_T] \leq A(T)\Big(\E_f[R^0_T]+ \sqrt{\E_f[R^0_T]}\Big),
		\end{equation}
		where $A(T) = 1+ \sqrt{2\pi b_1 \constFromEq{c}{eq:est_v_infty_1}}T^{1/2} + \sum_{i =1}^\infty a_i(T)$ with $a_i$ given by \eqref{eq:def_ai}. 
		Thus, our main task is to derive a suitable estimate for $\E[R_T^0]$. To this end, we proceed in a similar manner as in the proof of \cite[Theorem 3.3]{Dawson89}. Note that $R_0^0= R_0$ and thus we get for $r> R_0$ that
		\begin{equation*}
			\mathbb{P}_f(R^0_T >r) \leq 1-\exp(-\langle f, V^r_T \rangle),
		\end{equation*}
		where  $V^r_T(x) = \lim_{m \rightarrow \infty} V^{m,r}_T(x)$ for $x\in \R$ and $V^{m,r}$ is the solution to \eqref{eq:parabolic_m}.
		First, we consider  $T <1$. By applying \eqref{eq:est_v_infty_1} and \eqref{eq:ext_v_infty_2}  we find that 
		\begin{equation*}
			\begin{split}
				\E_f[R_T^0] &\leq R_0+ \int_{R_0}^\infty 1-\exp(- \langle f, V^r_T \rangle) dr\\
				& \leq R_0 + \int_{R_0}^{R_0+2T^{1/4}} 1-\exp(- \langle f, V^r_T\rangle) dr + \int_{R_0+2T^{1/4}}^\infty  1-\exp(- \langle f, V^r_T \rangle) dr\\
				& \leq R_0 +\int_{R_0}^{R_0+2T^{1/4}} 1- \exp(-\constFromEq{c}{eq:est_v_infty_1}m_f (r-R_0)^{-2})dr \\
				& \qquad \qquad + \int_{R_0+2T^{1/4}}^\infty \constFromEq{c}{eq:ext_v_infty_2}m_f (r-R_0) T^{-\frac{3}{2}} \exp\Big(-\frac{(r-R_0)^2}{2T}\Big)dr\\
				&= R_0 + I_1+I_2.
			\end{split}
		\end{equation*} 
		First, we estimate $I_1$. We find that
		\begin{equation*}
			\begin{split}
				&I_1 =\sqrt{c m_f}  \int_0^{\frac{2T^{1/4}}{\sqrt{c m_f}}} 1 - \exp(- r^{-2})dr\\
				& \leq \sqrt{cm_f} \frac{2T^{1/4}}{\sqrt{c m_f}} (1- \exp(- \frac{c m_f}{2 T^{1/2}})) + \textcolor{black}{2}\sqrt{cm_f } \int_{\frac{\sqrt{cm_f}}{2T^{1/4}}}^\infty \exp(-x^2)dx\\
				& \leq T^{-1/4} cm_f + 2\sqrt{c m_f}, 
			\end{split}
		\end{equation*}
		as well as $I_1 \leq 2T^{1/4}$ and thus together
		\begin{equation*}
			I_1 \leq \min(2T^{1/4}, T^{-1/4}cm_f + \textcolor{black}{2}\sqrt{cm_f}).
		\end{equation*}
		Now, we estimate $I_2$. We get 
		\begin{equation*}
			I_2 \leq cm_f \int_{2 T^{1/4}}^\infty r T^{-3/2} \exp\Big( \frac{-r^2}{2T}\Big)dr \leq cm_f \frac{1}{\sqrt{T}}\exp(-\frac{2}{\sqrt{T}}).
		\end{equation*}
		Thus, for $T<1$ we arrive at
		\begin{equation}\label{eq:RT0_1}
		\begin{split}
			\E_f[R_T^0] &\leq R_0 + \min(2T^{1/4}, T^{-1/4}cm_f + 2\sqrt{cm_f}) + cm_f \frac{1}{\sqrt{T}} \exp(-\frac{2}{\sqrt{T}}) \\
			&\leq  R_0 + \min(2T^{1/4}, T^{-1/4}cm_f + \textcolor{black}{2}\sqrt{cm_f}) + cm_f\sqrt{T}.
		\end{split}
		\end{equation}
		When $T \geq 1$ we similarly arrive at 
		\begin{equation}\label{eq:RT0_2}
			\begin{split}
				\E_f[R_T^0] &\leq R_0 + \int_{R_0}^\infty 1-\exp(- \langle f, V^r_T \rangle) dr\\
				& \leq R_0 +\int_{R_0}^{R_0+3T^{1/2}} 1- \exp(-cm_f (r-R_0)^{-2})dr\\
                & \qquad + \int_{R_0+3T^{1/2}}^\infty cm_f (r-R_0) T^{-\frac{3}{2}} \exp\Big(-\frac{(r-R_0)^2}{2T}\Big)dr\\
				& \leq R_0+ cm_f +\sqrt{cm_f}.
			\end{split}
		\end{equation}
		Set 
		\begin{equation*}
			\beta^1(T, m_f) = \mathbbm{1}_{T<1}\big(\min(2T^{1/4}, T^{-1/4}cm_f + \textcolor{black}{2}\sqrt{cm_f})+ cm_f\sqrt{T} \big) + \mathbbm{1}_{T \geq 1}\big(cm_f +\sqrt{cm_f}\big) .
		\end{equation*}
		Then, \eqref{eq:RT0_1}, \eqref{eq:RT0_2} together with \eqref{eq:RT_exp} imply
		\begin{equation*}
			\E_f[R_T] \leq A(T)(R_0 + \sqrt{R}_0 + \beta^1(T, m_f) +  \sqrt{\beta^1(T, m_f)}). 
		\end{equation*}
		Setting 
		\begin{equation*}
			\beta(T,m_f) = \beta^1(T, m_f) +  \sqrt{\beta^1(T, m_f)}
		\end{equation*}
		yields the assertion.
	\end{proof}
	Using Proposition~\ref{prop:expectation_RT}, the proof of Theorem~\ref{th:compact_support} is almost immediate.
	\begin{proof}[Proof of Theorem~\ref{th:compact_support}]
		By Proposition~\ref{prop:expectation_RT} we find that 
		\begin{equation*}
			\bbP_f(X_t \in \mcC_c^+(\R), \forall 0 \leq t \leq T) = 1, 
		\end{equation*}
		for all $T>0$, which implies the conclusion.
	\end{proof}

    \section{Extinction -- Proof of Theorem~\ref{prop:extinction}} \label{sec:extinction}
		In this section, we prove Theorem~\ref{prop:extinction}.
	The proof relies on Proposition~\ref{prop:expectation_RT}, which allows for comparisons with squared Bessel processes. 
	\begin{lemma}\label{lem:extinction_1}
		Let $\eta \in (0,1/2)$ and $b_1>0$ and  let $c_\eta >0$ be such that 
        \begin{equation}\label{eq:mu_nuf_cond}
			\frac{1}{\Gamma(\mu)}\int_{c_\eta}^\infty u^{\mu-1}\ee^{-u}du \geq \frac{3}{4},
		\end{equation}
        for $\mu = 1-2\eta$.
        Assume that $f\in \mcC_c^+$ with $f\neq 0$, $m_f = \langle f,1\rangle$ and $K_f = \mathrm{diam}(\supp(f))/2$ is such that the following is satisfied
		\begin{enumerate}
			\item It holds that 
		\begin{equation*}
			m_f  = \langle f, 1\rangle < \frac{c_\eta}{2}.
		\end{equation*}
		\item For $\gamma(1,K_f, m_f) := A(1)(K_f+\sqrt{K_f} +\beta(1,m_f))$ with $A$ and $\beta$ from \eqref{eq:expectation_RT_est}, it holds that 
		\begin{equation}\label{eq:gamm_cond}
			\frac{b_1}{\eta}\gamma(1, K_f, m_f) \leq 10^{-2}.
		\end{equation}
		\end{enumerate}
		Then it holds for all $t\geq \frac{2m_f}{c_\eta}$ that 
		\begin{equation*}
			\mathbbm{P}(\langle X_t ,1\rangle = 0) > \frac{1}{2},
		\end{equation*}
		where $X$ is the solution to \eqref{eq:SPDE1_sup} with $b_1>0$ and $f\in \mcC_c^+$ as above.
	\end{lemma}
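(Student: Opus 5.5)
The plan is to compare the total mass $Z_t:=\langle X_t,1\rangle$ on $[0,1]$ with the process $\tilde z$ from \eqref{eq:sde_compare} with $\delta=\eta$. Proposition~\ref{prop:expectation_RT} controls the event on which this comparison is valid. Since \eqref{eq:SPDE1_sup} is translation invariant and weak uniqueness holds, I first shift $f$ so that its support is centred at the origin. Then $R_0=K_f$, and Proposition~\ref{prop:expectation_RT} gives $\E_f[R_1]\le \gamma(1,K_f,m_f)$.

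\textbf{Step 1: semimartingale form of the mass.} Testing the martingale problem \eqref{eq:mp_lim} against functions increasing to $1$, and using that $X_t$ is compactly supported with $\E_f[R_T]<\infty$, gives
\[
Z_t=m_f+\int_0^t b_1\Leb(\{x\colon X_s(x)>0\})\,ds+M_t,\qquad \langle M\rangle_t=\int_0^t Z_s\,ds .
\]
After enlarging the probability space, I write $M_t=\int_0^t\sqrt{Z_s}\,dB_s$ for a Brownian motion $B$. The drift satisfies $b_1\Leb(\{X_s>0\})\le 2b_1R_s$, and $R_s$ is nondecreasing in $s$.

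\textbf{Step 2: pathwise comparison.} Let $\tau=\inf\{s\colon 2b_1R_s>\eta\}\wedge 1$, and let $\tilde z$ solve \eqref{eq:sde_compare} with $\delta=\eta$, driven by the same $B$ with $\tilde z_0=m_f$. On $[0,\tau]$ the drift of $Z$ is at most $\eta$. The diffusion coefficient $\sqrt{\cdot}$ is $1/2$-H\"older, so the Yamada--Watanabe/Ikeda--Watanabe comparison theorem applies and gives $Z_t\le\tilde z_t$ for $t\le\tau$ almost surely. One has to allow an adapted, bounded-by-$\eta$ drift here, and this is the step I expect to be the main technical obstacle: one must make sure the comparison argument with the $\sqrt{\cdot}$ coefficient works with a random drift and up to a stopping time. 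I would redo the standard local-time argument for $(Z-\tilde z)^+$ stopped at $\tau$.

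\textbf{Step 3: the Bessel computation.} The process $y=4\tilde z$ solves $dy=4\eta\,dt+2\sqrt{y}\,dB$, so it is a squared Bessel process of dimension $4\eta<2$, started at $4m_f$. The classical formula for its hitting time of zero is
\[
\bbP(T_0\le t)=\frac{1}{\Gamma(\mu)}\int_{2m_f/t}^\infty u^{\mu-1}\ee^{-u}\,du,\qquad \mu=1-2\eta .
\]
By \eqref{eq:mu_nuf_cond} this probability is at least $3/4$ whenever $2m_f/t\le c_\eta$. Condition~1 on $m_f$ guarantees $t_0:=2m_f/c_\eta<1$.

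\textbf{Step 4: conclusion.} By Markov's inequality and \eqref{eq:gamm_cond},
\[
\bbP(\tau<1)\le\bbP(2b_1R_1>\eta)\le \frac{2b_1}{\eta}\E_f[R_1]\le 2\cdot10^{-2}.
\]
On $\{\tau=1\}\cap\{T_0\le t_0\}$ the mass $Z$ hits zero before time $t_0$. Zero is absorbing for \eqref{eq:SPDE1_sup}: by the strong Markov property and weak uniqueness with initial condition $0$, once $X$ vanishes it stays zero. Hence for every $t\ge t_0$,
\[
\bbP(\langle X_t,1\rangle=0)\ge \frac34-\frac{2}{100}>\frac12 .
\]
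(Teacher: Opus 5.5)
Your proposal is correct and follows essentially the same route as the paper. You control the stopping time $\tau$ via Markov's inequality and Proposition~\ref{prop:expectation_RT}, compare the total mass pathwise with $\tilde z$ (drift $\eta$, same Brownian motion) up to $\tau\wedge 1$, and use the gamma law for the hitting time of zero of the squared Bessel process $4\tilde z$ of dimension $4\eta$. You then use absorption at zero (via weak uniqueness from the zero initial condition) to pass to all $t\ge 2m_f/c_\eta$; your treatment of the comparison via the local-time argument and of the absorption step makes explicit what the paper only asserts.
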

	\begin{proof}
		Without loss of generality we may assume that $\supp(f) \subseteq B(0, K_f)$ and note that $K_f \leq \eta/(2b_1)$.
		Let us introduce the stopping time $\tau$ given by 
		\begin{equation*}
			\tau = \inf\{ t > 0 \colon R_{t-} \geq \eta/(2b_1)\}.
		\end{equation*}
		Then we have the estimate 
		\begin{equation*}
			\mathbb{P}(\tau > t) \geq \mathbbm{P}(R_t < \eta/(2b_1)) = 1- \mathbbm{P}(R_t \geq  \eta/(2b_1)) \geq 1-2b_1\frac{\mathbb{E}[R_t]}{\eta}. 
		\end{equation*}
		Furthermore, for $t \leq 1$ we have by \eqref{eq:gamm_cond} and Proposition~\ref{prop:expectation_RT} that 
		\begin{equation*}
			\frac{2b_1}{\eta}\E[R_t] \leq 	\frac{2b_1}{\eta}\E[R_1] \leq2\cdot 10^{-2},  
		\end{equation*}
		and thus 
		\begin{equation}\label{eq:est_tau_t}
			\bbP(\tau >t) \geq 1-2\cdot10^{-2}, \quad t \in [0,1].
		\end{equation}
		The total mass $z_t = \langle X_t,1 \rangle $ of the solution $X$ to \eqref{eq:SPDE1_sup} satisfies 
		\begin{equation}\label{eq:sde_z_bessel}
			z_t =\langle f, 1\rangle + \int_0^t b_1 \Leb(\{ x\in \R \colon X_s(x) >0 \}) ds + \int_0^t\sqrt{z_s}dB_s, \quad t \geq 0,
		\end{equation}
        where $B$ is a standard Brownian motion. 
		For $s\leq \min(\tau,1)$, we find that 
		\begin{equation*}
			b_1 \Leb(\{ x\in \R \colon X_s(x) >0 \})  \leq b_1 2R_s\leq \eta. 
		\end{equation*}
		Thus, by a comparison argument, we find that almost surely, for $t\leq \min(\tau,1)$, $z_t \leq \tilde{z}_t$, where $\tilde{z}_t$ is the solution of 
		\begin{equation*}
			\tilde{z}_t = \langle f,1\rangle + t\eta + \int_0^t \sqrt{\tilde{z}_s}dB_s, \quad t >0,
		\end{equation*}
        with the same Brownian motion $B$ as in \eqref{eq:sde_z_bessel}. 
		Let us furthermore denote 
		\begin{equation*}
			\sigma_0 = \inf\{t>0 \colon z_t =0\},
		\end{equation*}
		and 
		\begin{equation*}
			\tilde{\sigma}_0 = \inf\{ t >0 \colon \tilde{z}_t = 0\}.
		\end{equation*}
		It is well known (see for instance \cite[(13)-(15)]{Yor03}) that 
        \[
\mathbb P_{m_f}\left(\widetilde\sigma_0\le t\right)
=
\frac{1}{\Gamma(\mu)}
\int_{2m_f/t}^{\infty}
u^{\mu-1}e^{-u}\,du,
\]
and thus since $t\geq \frac{2m_f}{c_\eta}$
we get by \eqref{eq:mu_nuf_cond}
\begin{equation}\label{eq:est_sigma_t}
    \mathbb{P}_{m_f}( \tilde{\sigma}_0 \leq t) \geq \frac{3}{4}.
\end{equation}
		Together \eqref{eq:est_tau_t} and \eqref{eq:est_sigma_t} imply for $t\in [\frac{2m_f}{c_\eta},1]$ that 
		\begin{equation*}
			\begin{split}
				\mathbb{P}(\langle X_t,1 \rangle = 0) &\geq \mathbb{P}(\sigma_0 <t , \tau >t) \\
				& \geq \max\Big( 0, \mathbb{P}(\tilde{\sigma}_0 <t) + \mathbb{P}(\tau >t) - 1\Big)\\
				& \geq \frac{3}{4} + 1-2\cdot10^{-2} -1 > \frac{1}{2}.
			\end{split}
		\end{equation*}
		Since $\{\langle X_t, 1\rangle = 0, \langle X_s, 1\rangle = 0\} = \{\langle X_s, 1\rangle = 0\}$ for $s\leq t$, this finishes the proof.
	\end{proof}
	\begin{proof}[Proof of Theorem~\ref{prop:extinction}]
       If $f \equiv 0$, then the identically zero process is a weak solution of \eqref{eq:SPDE1_sup}, and weak uniqueness implies that $X \equiv 0$ almost surely. Thus, we can assume without loss of generality that $f \neq 0$.
       
		We only prove the claim for $t\in (0,1)$, since the case $t\geq 1$ is an immediate consequence. \\
		Fix some $\eta \in (0,1/2)$ and $t\in (0,1)$.  Since $f$ is compactly supported we can find a decomposition 
		\begin{equation*}
			f = \sum_{i =1}^{k} f_i, 
		\end{equation*}
		for some $k \in \mathbb{N}$ depending only on $K_f = \mathrm{diam}(\supp(f))/2$, $m_f = \langle f, 1\rangle$ and $t$  such that for all $i \in \{1, \ldots, k\}$,
		\begin{enumerate}
			\item $f_i\in \mcC_c^+$, 
			\item $m_ {f_i} = \langle f_i, 1\rangle < \frac{c_\eta}{2}$, with $c_\eta$ from \eqref{eq:mu_nuf_cond},
			\item $\frac{b_1}{\eta}\gamma(1,K_{f_i}, m_{f_i}) \leq 10^{-2}$, where $K_{f_i} = \mathrm{diam}(\supp(f_i))/2$ and $\gamma(1,K_{f_i}, m_{f_i}) = A(1)(K_{f_i}+\sqrt{K_{f_i}} +\beta(1,m_{f_i}))$ with $A$ and $\beta$ from \eqref{eq:expectation_RT_est},
			\item $t \geq \frac{2m_{f_i}}{c_\eta}$.
		\end{enumerate}
		As in Remark~\ref{rem:spde_gen}, one can use standard arguments to show that $X$ is stochastically dominated by $X^{sum} = \sum_{i = 1}^k Z^i$, where $(Z^i)_{i = 1}^k$ are independent and each $Z^i$ is the solution to 
		\begin{equation*}
			d_tZ^i_t(x) = \frac{1}{2}\Delta Z^i_t(x) +b_1\mathbbm{1}_{Z^i_t(x)>0}+ \sqrt{Z^i_t(x)}\dot{W}^i(t,x), \quad Z_0^i = f_i, \quad  x\in \R, t\geq 0.
		\end{equation*}
		Here, $(\dot{W}^i)_{i = 1}^k$ are independent space-time white noises.  Note that we can apply Lemma~\ref{lem:extinction_1} to $Z^i$ for each $i \in \{ 1, \ldots, k\}$ and arrive at 
		\begin{equation*}
			\bbP(\langle X_t, 1\rangle = 0) \geq \prod_{i = 1}^k\bbP(\langle Z^i_t, 1\rangle = 0) \geq \Big(\frac{1}{2}\Big)^k>0,
		\end{equation*}
		which completes the proof.  
	\end{proof}

	\bibliographystyle{plain}
	\bibliography{example.bib}
\end{document}